\theoremstyle{definition}
\theoremstyle{plain}
\newtheorem{theorem}{Theorem}[section]
\newtheorem{definition}{Definition}[section]
\newtheorem{remark}{Remark}[section]
\newtheorem{lemma}{Lemma}[section]
\newtheorem{proposition}{Proposition}[section]
\numberwithin{equation}{section}
\newcommand{\vs}{\vspace}
\begin{document}

\title{Normalized solutions and stability for biharmonic Schr\"odinger equation with potential on waveguide manifold\footnote{  This work was partially supported by NNSFC (No. 12171493).}}

\author{ Jun Wang$^{a}$, Zhaoyang Yin$^{a, b}$\footnote {Corresponding author. wangj937@mail2.sysu.edu.cn (J. Wang), mcsyzy@mail.sysu.edu.cn (Z. Yin)
} \\
{\small $^{a}$Department of Mathematics, Sun Yat-sen University, Guangzhou, 510275, China } \\
{\small $^{b}$School of Science, Shenzhen Campus of Sun Yat-sen University, Shenzhen, 518107, China } \\
}

	\date{}

	\maketitle

\date{}

 \maketitle \vs{-.7cm}

  \begin{abstract}
In this paper, we
study the  following biharmonic Schr\"odinger equation with potential and mixed nonlinearities
\begin{equation*}
 \left\{\aligned
&\Delta^2 u +V(x,y)u+\lambda u =\mu|u|^{p-2}u+|u|^{q-2}u,\ (x, y) \in \Omega_r \times \mathbb{T}^n, \\
&\int_{\Omega_r\times\mathbb{T}^n}u^2dxdy=\Theta,
\endaligned
\right.
\end{equation*}
where $\Omega_r \subset \mathbb{R}^d$ is an open bounded convex domain, $r>0$ is large and $\mu\in\mathbb{R}$. The exponents satisfy $2<p<2+\frac{8}{d+n}<q<4^*=\frac{2(d+n)}{d+n-4}$, so that the nonlinearity is a combination of a mass subcritical and a mass supercritical term. Under some assumptions on $V(x,y)$ and $\mu$, we obtain the several existence results on waveguide manifold. Moreover, we also consider the orbital stability of the solution.
\end{abstract}

{\footnotesize {\bf   Keywords:}  Biharmonic Schr\"odinger equation; Normalized solutions;  Variational methods; Waveguide manifold.

{\bf 2010 MSC:}  35A15, 35B38, 35J50, 35Q55.
}

\section{ Introduction and main results}

This paper studies the existence of normalized solutions for the following biharmonic Schr\"odinger equation with potential and mixed nonlinearities
\begin{equation} \label{eq1.1}
 \left\{\aligned
&\Delta^2 u +V(x,y)u +\lambda u=\mu|u|^{p-2}u+|u|^{q-2}u,\ (x, y) \in \Omega_r \times \mathbb{T}^n, \\
&\int_{\Omega_r\times\mathbb{T}^n}u^2dxdy=\Theta,\ u\in H_0^2(\Omega_r\times\mathbb{T}^n),
\endaligned
\right.
\end{equation}
where $\Omega_r \subset \mathbb{R}^d$ is an open bounded convex domain, $r>0$ is large, $d \geq 5$, $2+\frac{8}{d+n}< q<4^*$, the mass $\Theta>0$ and the parameter $\mu \in \mathbb{R}$ are prescribed. The frequency $\lambda$ is unknown and to be determined. The energy functional $I_r: H_0^2(\Omega_r\times\mathbb{T}^n) \rightarrow \mathbb{R}$ is defined by
\begin{equation*}
I_r(u)=\frac{1}{2} \int_{\Omega_r\times\mathbb{T}^n}[|\Delta u|^2+V(x,y)u^2] d xdy -\frac{1}{q} \int_{\Omega_r\times\mathbb{T}^n}|u|^q d xdy-\frac{\mu}{p} \int_{\Omega_r\times\mathbb{T}^n}|u|^p d xdy
\end{equation*}
and the mass constraint manifold is defined by
\begin{equation*}
S_{r, \Theta}=\left\{u \in H_0^2(\Omega_r\times\mathbb{T}^n):\|u\|_2^2=\Theta\right\}.
\end{equation*}
If $\Omega=\mathbb{R}^d$, the energy functional $I: H^2(\mathbb{R}^d\times\mathbb{T}^n) \rightarrow \mathbb{R}$ is defined by
\begin{equation*}
I(u)=\frac{1}{2} \int_{\mathbb{R}^d\times\mathbb{T}^n}[|\Delta u|^2+V(x,y)u^2] d xdy -\frac{1}{q} \int_{\mathbb{R}^d\times\mathbb{T}^n}|u|^q d xdy-\frac{\mu}{p} \int_{\mathbb{R}^d\times\mathbb{T}^n}|u|^p d xdy
\end{equation*}
and the mass constraint manifold is defined by
\begin{equation*}
S_{\Theta}=\left\{u \in H_0^2(\mathbb{R}^d\times\mathbb{T}^n):\|u\|_2^2=\Theta\right\}.
\end{equation*}

Recently, there has been an increasing interest in studying dispersive equations on the waveguide manifolds $\mathbb{R}^d\times\mathbb{T}^n$ whose mixed type geometric nature makes the underlying analysis rather challenging and delicate. For the nonlinear Schr\"odinger model, there have been many results on waveguide manifolds, as shown in \cite{{XCZG2020},{ZHBP2014},{RKJM2021},{YL2023},{ZZH2021},{ZZJZ2021},{NTNV2012},{ADI2012}}. However, there is relatively little research on the biharmonic Schr\"odinger equation on the waveguide manifolds. As far as we know, \cite{XYHY2024} is the first well-posedness and scattering result for biharmonic Schr\"odinger equation without mixed dispersion. After that, Hajaiej et. al in \cite{HHYL2024} consider the following question
\begin{equation*}
  \Delta_{x,y}^2u-\beta \Delta_{x,y}u + \theta u = |u|^{\alpha}u, \ (x,y)\in\mathbb{R}^d\times\mathbb{T}^n ,
\end{equation*}
where $\beta \in \mathbb{R}$ and $\alpha \in (0, \frac{8}{d+n})$. The purpose of this paper is
to study the existence and stability results for normalized ground states. However, they only considered the case where the nonlinear term is $L^2$-subcritical, in which case classical concentration compactness arguments can be used to find the ground state solution. In this article, we consider a more complex scenario where the nonlinear term consists of $L^2$-subcritical and $L^2$-supercritical and has an abstract potential function.  The purpose of this paper is study the following two questions:
\begin{itemize}
  \item the existence of normalized solutions for \eqref{eq1.1};
  \item the orbital stability of the normalized solutions.
\end{itemize}
For the first goal, we use monotonicity techniques. In order to obtain orbital stability of the normalized solutions, we need to modify the classical Cazenave-Lions argument, see \cite{{TCPLL1982}}. However, given the emergence of abstract potential functions, the globally well-posed result for \eqref{eq6.1} is still open, so we need to assume that \eqref{eq6.1} is globally well-posed.

For some results, we expect that $V$ is $C^1$ and consider the function
$$
\widetilde{V}: \mathbb{R}^d\times\mathbb{T}^n \rightarrow \mathbb{R}, \quad \widetilde{V}(z)=\nabla V(z) \cdot z
$$
For $\Omega \subset \mathbb{R}^d$ and $r>0$, let
$$
\Omega_r=\left\{r x \in \mathbb{R}^d: x \in \Omega\right\}
$$
and
$$
S_{r, \Theta}:=S_\Theta \cap H_0^2(\Omega_r\times\mathbb{T}^n)=\left\{u \in H_0^2(\Omega_r\times\mathbb{T}^n):\|u\|_{L^2(\Omega_r\times\mathbb{T}^n)}^2=\Theta\right\} .
$$
From now on we assume that $\Omega \subset \mathbb{R}^d$ is a bounded smooth convex domain with $0 \in \Omega$.

Our assumptions on $V$ are:
\begin{itemize}
\item[$(V_0)$]\ $ V \in C^1(\mathbb{R}^d\times\mathbb{T}^n) \cap L^{\frac{d+n}{4}}(\mathbb{R}^d\times\mathbb{T}^n)$ is bounded and $\left\|V_{-}\right\|_{\frac{d+n}{4}}<S$.

\item[$(V_1)$]\ $V$ is of class $C^1, \lim\limits_{|z| \rightarrow \infty} V(z)=0$, and there exists $\rho \in(0,1)$ such that
$$
\liminf\limits_{|z| \rightarrow \infty} \inf _{z \in B(z_1, \rho|z_1|)}(z_1 \cdot \nabla V(z)) e^{\tau|z_1|}>0 \quad \text { for any } \tau>0.
$$
\end{itemize}
\begin{remark}\label{R1.1}
In order to obtain the existence of normalized solutions in $\mathbb{R}^d\times\mathbb{T}^n$ by taking $\Omega=B_1$, the unit ball centered at the origin in $\mathbb{R}^d$, and analyzing the compactness of the solutions $u_{r, \Theta}$ established in Theorems \ref{t1.1}, \ref{t1.2} and \ref{t1.3} as $r$ tends to infinity, we require the condition $(V_1)$.
\end{remark}

To obtain results, we need an inhomogeneous Gagliardo--Nirenberg inequality on $\mathbb{R}^d \times \mathbb{T}^n$.

\begin{lemma}\label{L2.1}(Gagliardo-Nirenberg inequality). On $\mathbb{R}^d \times \mathbb{T}^n$ we have the Gagliardo-Nirenberg inequality
$$
\|u\|_{q}^q \leq C_{d,n,q}\|\Delta u\|_{2}^{\frac{(d+n)(q-2)}{4}}\|u\|_2^{q-\frac{(d+n)(q-2)}{4}},
$$
where $d+n>4$.
\end{lemma}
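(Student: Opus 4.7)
The plan is to derive the inequality as a consequence of the critical biharmonic Sobolev embedding $H^2(\mathbb{R}^d\times\mathbb{T}^n)\hookrightarrow L^{4^*}(\mathbb{R}^d\times\mathbb{T}^n)$, where $4^*=\frac{2(d+n)}{d+n-4}$, together with H\"older interpolation between $L^2$ and $L^{4^*}$.

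First I would establish the embedding
\[
\|u\|_{4^*}\le C\,\|\Delta u\|_2.
\]
One route is via Fourier analysis: writing $u(x,y)=\sum_{k\in\mathbb{Z}^n}u_k(x)e^{ik\cdot y}$ and estimating $\|\Delta u\|_2^2=\sum_k\int_{\mathbb{R}^d}|(-\Delta_x+|k|^2)u_k|^2\,dx$, then combining the Sobolev inequality in $x$ with a discrete Sobolev inequality in $k$. A more elementary route is localization: partitioning $\mathbb{R}^d$ into unit cubes $\{Q_j\}$, applying the standard Sobolev embedding $H^2(Q_j\times\mathbb{T}^n)\hookrightarrow L^{4^*}(Q_j\times\mathbb{T}^n)$ on the compact cylinder $Q_j\times\mathbb{T}^n$, and then summing the local estimates using the monotonicity $\|\cdot\|_{\ell^{4^*}}\le\|\cdot\|_{\ell^2}$ (which holds because $4^*\ge 2$).

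Given the Sobolev embedding, for any $q$ with $2<q<4^*$ H\"older's inequality yields
\[
\|u\|_q\le \|u\|_2^{1-\theta}\|u\|_{4^*}^{\theta},\qquad \frac{1}{q}=\frac{1-\theta}{2}+\frac{\theta}{4^*},
\]
and a short calculation gives $\theta=\frac{(d+n)(q-2)}{4q}\in(0,1)$. Raising to the $q$-th power and plugging in $\theta q=\frac{(d+n)(q-2)}{4}$ and $(1-\theta)q=q-\frac{(d+n)(q-2)}{4}$, together with the Sobolev bound on $\|u\|_{4^*}$, produces
\[
\|u\|_q^q\le C_{d,n,q}\|\Delta u\|_2^{\frac{(d+n)(q-2)}{4}}\|u\|_2^{q-\frac{(d+n)(q-2)}{4}},
\]
which is the claim.

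The main technical obstacle is the critical Sobolev embedding on the waveguide, since $\mathbb{R}^d\times\mathbb{T}^n$ is non-compact and does not enjoy the full scaling invariance of $\mathbb{R}^{d+n}$. Nonetheless, the embedding is driven by small-scale, high-frequency behavior which is essentially Euclidean, and either of the two routes above delivers it. Once the embedding is in hand, the remaining algebra is routine.
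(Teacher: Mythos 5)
The interpolation step at the end of your argument is correct (the exponent arithmetic checks out), but the whole proof rests on the homogeneous critical embedding $\|u\|_{4^*}\le C\|\Delta u\|_2$ on $\mathbb{R}^d\times\mathbb{T}^n$, and that embedding is false. Test it on $u_\lambda(x,y)=v(\lambda x)$ with $v\in C_c^\infty(\mathbb{R}^d)$, constant in $y$: then $\|u_\lambda\|_{4^*}=c_1\lambda^{-d/4^*}$ while $\|\Delta u_\lambda\|_2=c_2\lambda^{2-d/2}$, and since the map $m\mapsto\frac{2m}{m-4}$ is decreasing, $4^*=\frac{2(d+n)}{d+n-4}<\frac{2d}{d-4}$ for $n\ge1$, so the ratio $\|u_\lambda\|_{4^*}/\|\Delta u_\lambda\|_2\to\infty$ as $\lambda\to0$. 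Both of your proposed routes break down precisely here: in the Fourier route the zero mode $k=0$ contributes only $\|\Delta_x u_0\|_{L^2(\mathbb{R}^d)}$, which by scaling controls $\|u_0\|_{L^{2d/(d-4)}(\mathbb{R}^d)}$ but not the strictly smaller exponent $4^*$; in the localization route the cell-wise Sobolev embedding necessarily carries the full $H^2(Q_j\times\mathbb{T}^n)$ norm (constants lie in $H^2$ of a cell with vanishing bilaplacian energy, so the lower-order term cannot be dropped), and summing yields only $\|u\|_{4^*}\le C\bigl(\|\Delta u\|_2+\|u\|_2\bigr)$. Your heuristic that the estimate is ``driven by small-scale, high-frequency behavior'' covers concentration ($\lambda\to\infty$), which is indeed Euclidean, but not spreading ($\lambda\to0$), which is exactly where the waveguide loses scale invariance.

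The same test function shows the target inequality itself fails as displayed: for $u_\lambda$ as above, the ratio of the left-hand side to the right-hand side grows like $\lambda^{-n(q-2)/2}$ as $\lambda\to0$. So the gap in your argument cannot be repaired without changing the statement; what your localization route actually proves is the inhomogeneous version $\|u\|_q^q\le C_{d,n,q}\,\|u\|_{H^2}^{\frac{(d+n)(q-2)}{4}}\|u\|_2^{q-\frac{(d+n)(q-2)}{4}}$, equivalently the stated bound with $\|\Delta u\|_2$ replaced by $\|\Delta u\|_2+\|u\|_2$, which is the form found in the waveguide literature and is presumably what the paper intends by its phrase ``inhomogeneous Gagliardo--Nirenberg inequality.'' For what it is worth, the paper supplies no proof of this lemma at all, so there is nothing to compare your route against; but as written, your derivation, like the displayed statement, is not correct.
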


The main results of this paper are as follows.
\begin{theorem}\label{t1.1}(\textbf{case $\mu \leq 0$})
Assume $V$ satisfies $\left(V_0\right)$, is of class $C^1$ and $\widetilde{V}$ is bounded, $f$ satisfies $(f_1)-(f_2)$. There hold:

(i) For every $\Theta>0$, there exists $r_\Theta>0$ such that \eqref{eq1.1} on $\Omega_r\times\mathbb{T}^n$ with $r>r_\Theta$ has a mountain pass type solution $\left(\lambda_{r, \Theta}, u_{r, \Theta}\right)$ in $\Omega_r\times\mathbb{T}^n$. Moreover, there exists $C_\Theta>0$ such that
$$
\limsup _{r \rightarrow \infty} \max _{z \in \Omega_r\times\mathbb{T}^n} u_{r, \Theta}(z)<C_\Theta .
$$

(ii) If in addition $\|\widetilde{V}_{+}\|_{\frac{d+n}{4}}<2 S$, then there exists $\widetilde{\Theta}>0$ such that
$$
\liminf _{r \rightarrow \infty} \lambda_{r, \Theta}>0 \text { for any } 0<\Theta<\widetilde{\Theta} .
$$
\end{theorem}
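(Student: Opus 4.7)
My plan is to establish a mountain pass structure for $I_r$ on the constraint $S_{r,\Theta}$, extract a bounded Palais--Smale sequence via Jeanjean's monotonicity trick, and pass to the limit using the compactness of $H_0^2(\Omega_r \times \mathbb{T}^n) \hookrightarrow L^s$ for $2 \leq s < 4^*$ (which holds because $\Omega_r \times \mathbb{T}^n$ is bounded). The geometry follows from Lemma~\ref{L2.1} and $\|V_-\|_{\frac{d+n}{4}} < S$: for $u \in S_{r,\Theta}$,
$$
I_r(u) \geq \tfrac{1}{2}\bigl(1-\tfrac{\|V_-\|_{(d+n)/4}}{S}\bigr)\|\Delta u\|_2^2 - C_q \Theta^{(q-\gamma_q)/2}\|\Delta u\|_2^{\gamma_q} - \tfrac{\mu}{p}\|u\|_p^p,
$$
with $\gamma_q := \frac{(d+n)(q-2)}{4} > 2$ by the supercriticality of $q$; since $\mu \leq 0$ the $p$-term is nonnegative, so there is a sphere $\{\|\Delta u\|_2 = R_\Theta\}$ on which $I_r \geq \delta_\Theta > 0$. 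For the far endpoint, pick $\phi \in C_c^\infty$ supported in a small box $B \times K$ inside $\Omega_r \times \mathbb{T}^n$ (viewing $\mathbb{T}^n$ as the unit cube) and use the $L^2$-preserving family $\phi^\tau(x,y) := \tau^{(d+n)/2}\phi(\tau x, \tau y)$ for $\tau \geq 1$; then $\|\Delta \phi^\tau\|_2^2 = \tau^4 \|\Delta \phi\|_2^2$ while $\|\phi^\tau\|_q^q = \tau^{(d+n)(q-2)/2}\|\phi\|_q^q$, and $(d+n)(q-2)/2 > 4$ drives $I_r(\phi^\tau) \to -\infty$ as $\tau \to \infty$. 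Any $u_0 \in S_{r,\Theta}$ with kinetic energy below $R_\Theta$ provides the near endpoint, yielding a well-defined mountain pass level $c_r(\Theta) \geq \delta_\Theta > 0$.

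\textbf{Bounded PS and convergence.} To overcome the lack of a priori boundedness for PS sequences at a supercritical level, I would invoke Jeanjean's monotonicity trick, defining
$$
I_r^\sigma(u) = \tfrac{1}{2}\int\!\bigl(|\Delta u|^2 + V u^2\bigr) - \tfrac{\sigma}{q}\int |u|^q - \tfrac{\mu}{p}\int |u|^p, \qquad \sigma \in [\tfrac{1}{2},1].
$$
Then $\sigma \mapsto c_r^\sigma(\Theta)$ is nonincreasing, hence a.e.\ differentiable; at each differentiability point the min-max produces a PS sequence for $I_r^\sigma|_{S_{r,\Theta}}$ satisfying an approximate Pohozaev identity coming from $\phi^\tau$-type variations, which together with $\gamma_q > 2$ gives a uniform $H^2$ bound. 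Sending $\sigma \uparrow 1$ along such points and using the compactness $H_0^2 \hookrightarrow L^p \cap L^q$, a diagonal extraction produces a bounded PS sequence for $I_r|_{S_{r,\Theta}}$ at level $c_r(\Theta)$, converging strongly to a critical point $u_{r,\Theta}$ with multiplier $\lambda_{r,\Theta}$. The uniform energy bound $c_r(\Theta) \leq C_\Theta$ (obtained by using the same test path for all large $r$) yields a uniform $H^2$-bound on $u_{r,\Theta}$, from which Moser iteration on $\Delta^2 u = (\mu|u|^{p-2} + |u|^{q-2} - V - \lambda_{r,\Theta})u$ delivers the $L^\infty$ bound.

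\textbf{Part (ii).} For $\liminf_r \lambda_{r,\Theta} > 0$ when $\Theta$ is small, I would combine three identities for $u := u_{r,\Theta}$: (a) the Lagrange multiplier relation $\|\Delta u\|_2^2 + \int Vu^2 + \lambda\Theta = \mu\|u\|_p^p + \|u\|_q^q$; (b) the Pohozaev identity obtained by differentiating $I_r(u^\tau)$ at $\tau = 1$, which introduces $\int \widetilde{V} u^2$ and a boundary term on $\partial \Omega_r \times \mathbb{T}^n$ whose sign is controlled by the convexity of $\Omega$; and (c) the energy identity $I_r(u) = c_r(\Theta)$. Under the hypothesis $\|\widetilde{V}_+\|_{\frac{d+n}{4}} < 2S$ the Pohozaev relation is coercive in $\|\Delta u\|_2^2$, and since $c_r(\Theta) \to 0$ as $\Theta \to 0^+$ (by constructing the competitor path from $\phi$ with $\|\phi\|_2^2 = \Theta$), eliminating $\|u\|_q^q$ and $\|u\|_p^p$ between the three identities isolates $\lambda_{r,\Theta}\Theta$ and shows it is bounded below by a positive constant, uniformly in large $r$.

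\textbf{Main obstacle.} The principal difficulty is generating a bounded Palais--Smale sequence at the supercritical mountain pass level: Jeanjean's trick must be adapted to the biharmonic operator on the waveguide, and the accompanying Pohozaev identity on the bounded domain $\Omega_r \times \mathbb{T}^n$ requires a careful treatment of the boundary terms (where convexity of $\Omega$ is essential) and of the mixed $\mathbb{R}^d/\mathbb{T}^n$ scaling. A secondary delicate point is the uniform-in-$r$ lower bound on $\lambda_{r,\Theta}$ in part (ii), where the extra coercivity hypothesis $\|\widetilde{V}_+\|_{\frac{d+n}{4}} < 2S$ enters decisively.
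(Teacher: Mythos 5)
Your Part (i) follows essentially the same route as the paper: mountain pass geometry from the Gagliardo--Nirenberg inequality and $\|V_-\|_{\frac{d+n}{4}}<S$, the monotonicity trick for the family $I_{r,s}$ ($s\in[\tfrac12,1]$) to get bounded Palais--Smale sequences, compactness of $H_0^2(\Omega_r\times\mathbb{T}^n)\hookrightarrow L^t$ ($t<4^*$), and a Pohozaev identity with boundary term of the right sign (convexity of $\Omega_r$) to bound the solutions uniformly in $s$ and pass to $s\to 1$. Two caveats. First, the near endpoint $u^0$ with small $\|\Delta u^0\|_2$ only exists for $r$ large (on $S_{r,\Theta}$ one has $\|\Delta u\|_2^2\gtrsim \Theta/r^4$), which is where $r_\Theta$ comes from; you should make this explicit. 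Second, for the uniform-in-$r$ bound on $\max u_{r,\Theta}$ you propose Moser iteration; the paper instead uses a blow-up argument (rescale at a maximum point, pass to a limit equation $\Delta^2 v=|v|^{q-2}v$ on $\mathbb{R}^d\times\mathbb{T}^n$ or a half-space, and invoke Liouville theorems). For the bilaplacian there is no maximum principle or Kato-type inequality (cf.\ the paper's Remark 2.2), so a genuine Moser scheme is problematic; an $L^p$-regularity bootstrap could work but you would have to justify that the elliptic constants are uniform over the expanding domains $\Omega_r\times\mathbb{T}^n$, which is precisely what the blow-up argument avoids.

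Part (ii) contains a genuine gap. Your argument hinges on the claim that $c_r(\Theta)\to 0$ as $\Theta\to 0^+$. This is false in the mass-supercritical regime: writing $g(t)=\alpha t-\beta\Theta^{a}t^{b}$ with $\alpha=\tfrac12(1-\|V_-\|_{\frac{d+n}{4}}S^{-1})$, $a=\tfrac{4q-(d+n)(q-2)}{8}>0$ (since $q<4^*$) and $b=\tfrac{(d+n)(q-2)}{8}>1$ (since $q>2+\tfrac{8}{d+n}$), the mountain pass level is bounded below by $\max_t g(t)\sim\Theta^{-a/(b-1)}\to+\infty$ as $\Theta\to 0^+$. (This is consistent with Theorems 1.5--1.6, which assert $\lim_{\Theta\to 0}I(u_\Theta)=\infty$.) The blow-up of the level is not an obstacle but the engine of the paper's proof: it is used to show that $\liminf_{r\to\infty}\max_{\Omega_r\times\mathbb{T}^n}u_{r,\Theta}>0$ for small $\Theta$ (otherwise all $L^s$-norms with $s>2$ vanish, the quadratic part of the energy must blow up, and the multiplier identity forces $\lambda\to-\infty$, contradicting a comparison with the principal eigenvalue of $\Delta^2$). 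One must then pass to the limit $r\to\infty$, obtaining a profile $u_\Theta$ on $\mathbb{R}^d\times\mathbb{T}^n$ (or, after translation, on a half-space, which is excluded by Liouville theorems); the hypothesis $\|\widetilde V_+\|_{\frac{d+n}{4}}<2S$ enters only at this stage, through the Pohozaev identity for the \emph{limit} equation, to produce a lower bound on $\|\Delta u_\Theta\|_2^2$ of order $\Theta^{-\text{positive}}$, from which $(\tfrac1q-\tfrac12)\lambda_\Theta\|u_\Theta\|_2^2\to-\infty$ and hence $\lambda_\Theta>0$ for small $\Theta$. Simply ``eliminating $\|u\|_q^q$ and $\|u\|_p^p$'' among the three identities at fixed $r$ does not isolate a sign for $\lambda_{r,\Theta}\Theta$; moreover the weak limit may lose mass, so the case $u_\Theta=0$ has to be treated separately by recentering at the maximum points, which your sketch does not address.
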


\begin{theorem}\label{t1.2}(\textbf{case $\mu > 0$})
Assume $V$ satisfies $\left(V_0\right)$, $f$ satisfies $(f_1)-(f_2)$ and set
$$
\Theta_V=\left[\frac{1-\left\|V_{-}\right\|_{\frac{d+n}{4}} S^{-1}}{2(d+n)(q-p)}\right]^{\frac{d+n}{4}}\left[\frac{q(8-(d+n)(p-2))}{C_{d,n, q}}\right]^{\frac{8-(d+n)(p-2)}{4 (q-p)}}\left[\frac{(d+n)(q-2)-4}{\mu C_{d,n, p}}\right]^{\frac{(d+n)(q-2)-8}{4 (q-p)}}.
$$
Then the following hold for $0<\Theta<\Theta_V$:

(i) There exists $r_\Theta>0$ such that \eqref{eq1.1} on $\Omega_r\times\mathbb{T}^n$ with $r>r_\Theta$ has a local minimum type solution $\left(\lambda_{r, \Theta}, u_{r, \Theta}\right)$ in $\Omega_r\times\mathbb{T}^n$.

(ii) There exists $C_\Theta>0$ such that
$$
\limsup\limits_{r \rightarrow \infty} \max\limits_{z \in \Omega_r\times\mathbb{T}^n} u_{r, \Theta}(z)<C_\Theta, \quad \liminf\limits_{r \rightarrow \infty} \lambda_{r, \Theta}>0 .
$$
\end{theorem}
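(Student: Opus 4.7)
With $\mu>0$, the subcritical term $\mu|u|^{p-2}u$ pulls $I_r$ slightly below zero at small $H^2$-norm, while the supercritical term $|u|^{q-2}u$ sends $I_r$ to $-\infty$ at large norm; for small mass $\Theta$ this produces a local minimum / barrier geometry on $S_{r,\Theta}$. The plan is to pin this geometry down quantitatively via Gagliardo--Nirenberg and construct a local minimum type critical point inside the barrier, then pass $r\to\infty$ to obtain the uniform bounds in (ii).

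\textbf{Step 1: Geometric analysis via Lemma \ref{L2.1}.} For $u\in S_{r,\Theta}$ set $t=\|\Delta u\|_2$. Using the bound $\int V u^2\geq -\|V_-\|_{(d+n)/4}S^{-1}t^2$ implied by $(V_0)$ together with Lemma \ref{L2.1}, I obtain
\begin{equation*}
I_r(u)\;\geq\;h_\Theta(t):=\tfrac{1}{2}\bigl(1-\|V_-\|_{(d+n)/4}S^{-1}\bigr)t^2-\tfrac{C_{d,n,q}}{q}\Theta^{\gamma_q}t^{\sigma_q}-\tfrac{\mu C_{d,n,p}}{p}\Theta^{\gamma_p}t^{\sigma_p},
\end{equation*}
with $\sigma_p=(d+n)(p-2)/4<2<\sigma_q=(d+n)(q-2)/4$ and positive exponents $\gamma_p,\gamma_q$. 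The function $h_\Theta$ vanishes at $0$, dips to a negative local minimum at some $t_1(\Theta)$, rises to a local maximum at $t_2(\Theta)>t_1(\Theta)$, and then decreases to $-\infty$. A direct optimization shows $h_\Theta(t_2(\Theta))>0$ exactly when $\Theta<\Theta_V$, with $\Theta_V$ the explicit constant of the statement (obtained by setting $h_\Theta'=0$ at the maximum and requiring the corresponding value to be non-negative).

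\textbf{Step 2: Constrained local minimization.} For $0<\Theta<\Theta_V$, fix $\rho\in(t_1(\Theta),t_2(\Theta))$ with $h_\Theta(\rho)>0$ and set $A_{r,\rho}:=\{u\in S_{r,\Theta}:\|\Delta u\|_2\leq\rho\}$. A test function $\varphi\in C_c^\infty(\Omega\times\mathbb{T}^n)$ (contained in $\Omega_r\times\mathbb{T}^n$ once $r$ is large) with $\|\varphi\|_2^2=\Theta$ and $\|\Delta\varphi\|_2$ chosen close to $t_1(\Theta)$ certifies $m_r:=\inf_{A_{r,\rho}}I_r\leq c_0<0$ uniformly in $r$. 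Because $\Omega_r$ is a bounded domain in $\mathbb{R}^d$ and $\mathbb{T}^n$ is compact, the embedding $H_0^2(\Omega_r\times\mathbb{T}^n)\hookrightarrow L^s$ is compact for $2\leq s<4^*$, so any minimizing sequence has a weak $H_0^2$ limit $u_{r,\Theta}$ with strong $L^p,L^q$ convergence; weak lower semicontinuity of $\int|\Delta\cdot|^2+\int V(\cdot)^2$ then yields $I_r(u_{r,\Theta})=m_r$. The barrier $h_\Theta(\rho)>0>m_r$ forces $\|\Delta u_{r,\Theta}\|_2<\rho$, so $u_{r,\Theta}$ is an interior (hence genuine) critical point on $S_{r,\Theta}$, with a Lagrange multiplier $\lambda_{r,\Theta}$.

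\textbf{Step 3: Uniform estimates and main obstacle.} From $m_r\leq c_0<0$ and Step 1 I get $\|\Delta u_{r,\Theta}\|_2\leq C<t_2(\Theta)$ uniformly in $r$. A biharmonic bootstrap, starting from $u_{r,\Theta}\in H_0^2\hookrightarrow L^{4^*}$, using boundedness of $V$ and iterating the equation, promotes this to the uniform $L^\infty$ bound $\|u_{r,\Theta}\|_\infty<C_\Theta$. For $\lambda_{r,\Theta}$ I would test the Euler--Lagrange equation with $u_{r,\Theta}$ to obtain
\begin{equation*}
\lambda_{r,\Theta}\Theta=-\|\Delta u_{r,\Theta}\|_2^2-\int V u_{r,\Theta}^2+\|u_{r,\Theta}\|_q^q+\mu\|u_{r,\Theta}\|_p^p,
\end{equation*}
then substitute the identity $I_r(u_{r,\Theta})=m_r$ to eliminate $\|u_{r,\Theta}\|_q^q$ and exploit the uniform negativity $m_r\leq c_0<0$ together with the uniform $H^2$/$L^\infty$ bounds to extract $\liminf_{r\to\infty}\lambda_{r,\Theta}>0$. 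The main obstacle is the quantitative analysis of Step 1 that isolates the exact threshold $\Theta_V$: three competing powers of $t$ and three constants (the Sobolev constant entering through $V_-$ and the two Gagliardo--Nirenberg constants) must balance so that the local maximum of $h_\Theta$ just stays strictly positive, and this balance is precisely what underlies both the existence argument and the uniform lower bound for the Lagrange multiplier.
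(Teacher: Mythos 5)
Your overall strategy coincides with the paper's: the Gagliardo--Nirenberg lower bound with three competing powers of $\|\Delta u\|_2$, the threshold $\Theta_V$ chosen so that the intermediate barrier stays positive, minimization of $I_r$ on the sublevel set $\{u\in S_{r,\Theta}:\|\Delta u\|_2^2\le T_\Theta^2\}$ using compactness of the embedding on the bounded domain, the barrier to force the minimizer into the interior, and finally the identity $\lambda_{r,\Theta}\Theta=\tfrac{q-2}{q}\|u_{r,\Theta}\|_q^q+\mu\bigl(1-\tfrac{2}{p}\bigr)\|u_{r,\Theta}\|_p^p-2I_r(u_{r,\Theta})>-2e_{r,\Theta}$, combined with the monotonicity of $e_{r,\Theta}$ in $r$, to obtain $\liminf_{r\to\infty}\lambda_{r,\Theta}>0$. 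That is exactly Lemma 4.1 of the paper.

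There is, however, a genuine gap in your Step 2. You assert that a test function $\varphi$ with $\|\varphi\|_2^2=\Theta$ and $\|\Delta\varphi\|_2$ close to $t_1(\Theta)$ ``certifies'' $\inf_{A_{r,\rho}}I_r\le c_0<0$. This does not follow: $h_\Theta$ is only a \emph{lower} bound for $I_r$, so the negativity of $h_\Theta$ near $t_1(\Theta)$ says nothing about the sign of $I_r(\varphi)$ for any particular $\varphi$ --- the positive quadratic part $\tfrac12\int(|\Delta\varphi|^2+V\varphi^2)$ may well exceed $\tfrac{\mu}{p}\|\varphi\|_p^p+\tfrac1q\|\varphi\|_q^q$. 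What is needed is an \emph{upper} bound computation, and the paper supplies it by dilation: with $v_1$ the principal Dirichlet eigenfunction of $\Delta^2$ on $\Omega\times\mathbb{T}^n$ normalized in $S_{1,\Theta}$, set $u_r(x,y)=r^{-(d+n)/2}v_1(x/r,y/r)$; then $\|\Delta u_r\|_2^2=r^{-4}\theta\Theta$ while $\|u_r\|_p^p\ge r^{-(d+n)(p-2)/2}\Theta^{p/2}|\Omega\times\mathbb{T}^n|^{(2-p)/2}$, and since $(d+n)(p-2)/2<4$ the negative $p$-term dominates as $r\to\infty$, giving $I_r(u_r)<0$. This computation is not cosmetic: it is precisely where $\mu>0$ and $p<2+\tfrac{8}{d+n}$ enter, and it (together with the observation that the constraint set is empty when $r$ is too small) is what produces the explicit threshold $r_\Theta$ appearing in the statement.

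A secondary remark on part (ii): for the uniform $L^\infty$ bound the paper does not bootstrap. It argues by contradiction, rescaling around a hypothetical blow-up point and invoking Liouville theorems for $\Delta^2 v=|v|^{q-2}v$ on $\mathbb{R}^d\times\mathbb{T}^n$ and on a half-space (Lemma 3.5). Your elliptic bootstrap is fine for each fixed $r$, but the assertion is a bound uniform as $r\to\infty$, which would require uniform-in-$r$ control of $\lambda_{r,\Theta}$ and of the elliptic constants on the dilating domains $\Omega_r\times\mathbb{T}^n$; you should either justify that uniformity or adopt the blow-up/Liouville route.
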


\begin{theorem}\label{t1.3}(\textbf{case $\mu > 0$})
Assume $V$ satisfies $\left(V_0\right)$, is of class $C^1$ and $\widetilde{V}$ is bounded, $f$ satisfies $(f_1)-(f_2)$. Set
$$
\widetilde{\Theta}_V=\left(\frac{1-\left\|V_{-}\right\|_{\frac{d+n}{4}} S^{-1}}{2}\right)^{\frac{d+n}{4}}\left(\frac{C_{d,n,q}}{q}A_{p,q}+\frac{C_{d,n, q}}{q}\right)^{-\frac{d+n}{4}}\left(\frac{\mu q C_{d,n, p}}{C_{d,n, q}A_{p,q}}\right)^{\frac{8-(d+n)(q-2)}{4(d+n)(q-p)}},
$$
where
$$
A_{p, q}=\frac{(q-2)((d+n)(q-2)-8)}{(p-2)(8-(d+n)(p-2))}.
$$
Then the following hold for $0<\Theta<\widetilde{\Theta}_V$:

(i) There exists $\widetilde{r}_\Theta>0$ such that \eqref{eq1.1} in $\Omega_r$ admits for $r>r_\Theta$ a mountain pass type solution $\left(\lambda_{r, \Theta}, u_{r, \Theta}\right)$ in $\Omega_r\times\mathbb{T}^n$. Moreover, there exists $C_\Theta>0$ such that
$$
\limsup\limits_{r \rightarrow \infty} \max\limits_{z \in \Omega_r\times\mathbb{T}^n} u_{r, \Theta}(z)<C_\Theta .
$$

(ii) There exists $0<\bar{\Theta} \leq \widetilde{\Theta}_V$ such that
$$
\liminf\limits_{r \rightarrow \infty} \lambda_{r, \Theta}>0 \quad \text { for any } 0<\Theta \leq \bar{\Theta}.
$$
\end{theorem}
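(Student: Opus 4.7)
The plan is to adapt the Jeanjean–Soave mountain pass strategy for normalized solutions in the combined $L^2$–subcritical/supercritical regime to the bounded waveguide $\Omega_r\times\mathbb{T}^n$. First, combining Lemma \ref{L2.1} with $(V_0)$ and $\|u\|_2^2=\Theta$, I would estimate
\[
I_r(u)\geq \tfrac{1}{2}\bigl(1-\|V_-\|_{(d+n)/4}S^{-1}\bigr)\|\Delta u\|_2^2
   -\tfrac{\mu C_{d,n,p}}{p}\Theta^{(p-\gamma_p)/2}\|\Delta u\|_2^{\gamma_p}
   -\tfrac{C_{d,n,q}}{q}\Theta^{(q-\gamma_q)/2}\|\Delta u\|_2^{\gamma_q},
\]
with $\gamma_p=(d+n)(p-2)/4<2$ and $\gamma_q=(d+n)(q-2)/4>2$. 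Analyzing the resulting scalar function of $t:=\|\Delta u\|_2$, the explicit threshold $\widetilde{\Theta}_V$ is designed precisely so that this function has two positive critical points, a strict local minimum $t_1$ followed by a strict local maximum $t_2$. This yields the mountain pass geometry on $S_{r,\Theta}$: the sphere $\{\|\Delta u\|_2=t_2\}\cap S_{r,\Theta}$ separates the local minimum produced in Theorem \ref{t1.2} from configurations of arbitrarily low energy obtained from an $L^2$–preserving dilation of a fixed test function, inflated in the $x$–variable and extended by zero, which fits into $\Omega_r\times\mathbb{T}^n$ once $r$ is large.

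Next I would define the min–max value
\[
c_{r,\Theta}=\inf_{\gamma\in\Gamma_r}\max_{s\in[0,1]}I_r(\gamma(s))
\]
on the class $\Gamma_r$ of admissible paths and, following the Jeanjean device, enlarge the functional to $\widetilde{I}_r(\sigma,u)=I_r(\sigma\star u)$ on $\mathbb{R}\times S_{r,\Theta}$. Ekeland's variational principle then produces a Palais–Smale sequence $u_n\in S_{r,\Theta}$ with $I_r(u_n)\to c_{r,\Theta}$, $(I_r|_{S_{r,\Theta}})'(u_n)\to 0$, and an approximate Pohozaev identity $P_r(u_n)\to 0$ obtained from differentiation in $\sigma$. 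Combining this Pohozaev information with the energy bound yields boundedness of $(u_n)$ in $H_0^2(\Omega_r\times\mathbb{T}^n)$; this is where $\Theta<\widetilde{\Theta}_V$ is used essentially, to prevent the Gagliardo–Nirenberg terms from swamping the kinetic term. Since $\Omega_r\times\mathbb{T}^n$ is bounded, the embedding $H_0^2\hookrightarrow L^s$ is compact for $2\leq s<4^*$; together with boundedness of the associated Lagrange multipliers $\lambda_n$ this upgrades weak to strong convergence and delivers a critical point $u_{r,\Theta}$ at the mountain pass level.

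For the uniform $L^\infty$–bound in part (i), I would first show that $c_{r,\Theta}\leq C$ is $r$–independent by testing with a fixed compactly supported path lying eventually in every $\Omega_r\times\mathbb{T}^n$, which via the coercivity from Step 1 yields an $r$–independent $H^2$ bound on $u_{r,\Theta}$. Then $L^s$–regularity for $\Delta^2+\lambda$ on the bounded waveguide together with the boundedness of $V$ and of $\lambda_{r,\Theta}$ allows a Moser/bootstrap argument to produce the uniform constant $C_\Theta$. For part (ii), testing the Euler–Lagrange equation against $u_{r,\Theta}$, combining with the Pohozaev identity to eliminate the nonlinear terms, and applying Lemma \ref{L2.1} would yield an inequality of the form
\[
\lambda_{r,\Theta}\Theta\geq \delta\|\Delta u_{r,\Theta}\|_2^2-K\,\Theta^{\alpha}\|\Delta u_{r,\Theta}\|_2^{\gamma_q}
\]
for suitable $\delta,K,\alpha>0$. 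The mountain pass lower bound $\|\Delta u_{r,\Theta}\|_2\geq t_2$ then forces $\liminf_{r\to\infty}\lambda_{r,\Theta}>0$ once $\Theta$ lies below a new threshold $\bar{\Theta}\leq\widetilde{\Theta}_V$ that makes the first term on the right dominate.

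The main obstacle is establishing the approximate Pohozaev identity on the bounded domain $\Omega_r\times\mathbb{T}^n$: the $L^2$–preserving dilation $\sigma\star u$ is not compatible with $H_0^2(\Omega_r\times\mathbb{T}^n)$ since it can move the support of $u$ outside $\Omega_r$ in the $x$–variable. One remedy is to perform the Jeanjean–Soave procedure in the ambient space $\mathbb{R}^d\times\mathbb{T}^n$ with a smooth cut-off near $\partial\Omega_r$, treating the cut-off error as $o(1)$ as $r\to\infty$; the alternative is to exploit a genuine Rellich–Pohozaev identity for $\Delta^2$ on $\Omega_r\times\mathbb{T}^n$, whose boundary term has the correct sign by the convexity of $\Omega$. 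Verifying that this sign is favorable and that all estimates are uniform in $r$ is the delicate point; once this is in place, the remaining arguments parallel the Euclidean combined-nonlinearity theory of Soave.
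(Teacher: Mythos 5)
Your opening step reproduces the paper's geometry almost exactly: the same Gagliardo--Nirenberg lower bound for $I_r$ on $S_{r,\Theta}$, the same scalar function of $\|\Delta u\|_2$, and the same role for $\widetilde{\Theta}_V$ (Lemma \ref{L5.1} carries this out by bounding the three-term function $g_1$ from below on $[t_2,\infty)$ by a two-term function $g_2$, from which the explicit threshold drops out). The genuine gap is in your compactness mechanism. You propose the augmented functional $\widetilde I_r(\sigma,u)=I_r(\sigma\star u)$ to produce a Palais--Smale sequence carrying the approximate Pohozaev information $P_r(u_n)\to0$, and you yourself note that the $L^2$-preserving dilation is incompatible with $H_0^2(\Omega_r\times\mathbb{T}^n)$ (it is also incompatible with the torus factor). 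Neither of your proposed remedies closes this. The cut-off version only controls the error as $r\to\infty$, whereas the theorem asserts a solution for every fixed $r>\widetilde r_\Theta$. The Rellich--Pohozaev identity on the convex domain, with the favorable boundary term $\int_{\partial(\Omega_r\times\mathbb{T}^n)}|\Delta u|^2\,((x,y)\cdot\mathbf n)\,d\sigma\ge0$, is indeed what the paper uses --- but it is an identity for \emph{solutions}, not for Palais--Smale sequences, so it cannot be fed back into the minimax construction to bound $(u_n)$. The paper breaks this circle with the monotonicity trick: it perturbs to the family $J_{r,s}$, $s\in[\tfrac12,1]$, invokes Proposition \ref{p3.1} to obtain a \emph{bounded} Palais--Smale sequence for almost every $s$ with no Pohozaev input, uses the compact embedding on the bounded domain to convert it into a genuine solution $u_{r,s}$ (Lemma \ref{L5.2}), only then applies the Pohozaev identity to these solutions to get a bound uniform in $s$ and $r$ (Lemma \ref{L5.3}), and finally lets $s\to1$. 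Without this, or an equivalent device, your argument does not produce a bounded Palais--Smale sequence.

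Part (ii) is also underpowered as sketched. The claim concerns $\liminf_{r\to\infty}\lambda_{r,\Theta}$, so one must track where the mass of $u_{r,\Theta}$ goes as $r\to\infty$: the weak limit in $H^2(\mathbb{R}^d\times\mathbb{T}^n)$ may vanish, or the mass may translate off to infinity. The paper's Lemma \ref{L5.4} handles this with a vanishing/concentration dichotomy, the decay of $V$ encoded in $(V_0)$, Liouville theorems on $\mathbb{R}^d\times\mathbb{T}^n$ and on half-spaces, and the Pohozaev identity of the limiting autonomous equation, which forces $\lambda_{\Theta_n}>0$ and yields the contradiction. A single inequality of the shape $\lambda_{r,\Theta}\Theta\ge\delta\|\Delta u_{r,\Theta}\|_2^2-K\Theta^{\alpha}\|\Delta u_{r,\Theta}\|_2^{\gamma_q}$ combined with the mountain pass lower bound does not deliver the conclusion: after eliminating the nonlinear terms via Pohozaev, the coefficient multiplying $\|\Delta u\|_2^2$ on the relevant side is $\frac{(d+n-4)q-2(d+n)}{2q(d+n)}<0$ (since $q<4^*$), and positivity of $\lambda$ comes from dividing by the negative factor $\frac1q-\frac12$ together with a quantitative lower bound on $\|\Delta u\|_2^2$ that blows up as $\Theta\to0$, not from the mountain pass level alone.
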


If $\Omega=\mathbb{R}^d$, $(V_1)$ is significant for obtaining the following results.

\begin{theorem}\label{t1.4}(\textbf{case $\mu > 0$})
Assume $V$ satisfies $(V_0)-(V_1)$. Then problem \eqref{eq1.1} with $\Omega=\mathbb{R}^d$ admits for any $0<\Theta<\Theta_V$, where $\Theta_V$ is as in Theorem \ref{t1.2}, a solution $\left(\lambda_\Theta, u_\Theta\right)$ with $\lambda_\Theta>0$.
\end{theorem}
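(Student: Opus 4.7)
The plan is to construct the solution on $\mathbb{R}^d\times\mathbb{T}^n$ as the limit of the local-minimum solutions provided by Theorem \ref{t1.2} on the bounded domains $\Omega_r\times\mathbb{T}^n$ with $\Omega=B_1$. For each $r>r_\Theta$ and $0<\Theta<\Theta_V$, Theorem \ref{t1.2} yields a pair $(\lambda_{r,\Theta},u_{r,\Theta})$ satisfying \eqref{eq1.1} on $\Omega_r\times\mathbb{T}^n$ with $\|u_{r,\Theta}\|_2^2=\Theta$, which after extension by zero lies in $H^2(\mathbb{R}^d\times\mathbb{T}^n)$; part (ii) of the same theorem moreover supplies a uniform $L^\infty$ bound and, crucially, $\liminf_{r\to\infty}\lambda_{r,\Theta}>0$.

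The first step is to extract a limiting candidate. The local-minimum energy is controlled above by the energy of a fixed test function supported near the origin, and coercivity of the quadratic part, coming from $(V_0)$ via $\|V_-\|_{(d+n)/4}<S$ combined with Lemma \ref{L2.1}, then gives $\sup_r\|\Delta u_{r,\Theta}\|_2<\infty$. After extracting a subsequence, $u_{r_k,\Theta}\rightharpoonup u_\Theta$ weakly in $H^2(\mathbb{R}^d\times\mathbb{T}^n)$ and strongly in $L^2_{\mathrm{loc}}$, while $\lambda_{r_k,\Theta}\to\lambda_\Theta>0$; passing to the limit in the weak formulation shows that $u_\Theta$ solves the equation on $\mathbb{R}^d\times\mathbb{T}^n$ with frequency $\lambda_\Theta$.

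The heart of the argument is preserving the mass, namely ruling out $L^2$-mass escape to infinity so that $u_\Theta\in S_\Theta$. Local Rellich compactness prevents vanishing in bounded regions, so by concentration-compactness the remaining scenario to exclude is that a positive fraction of mass concentrates along a sequence $z_k$ with $|z_k|\to\infty$. This is exactly where $(V_1)$ enters, as already foreshadowed in Remark \ref{R1.1}. The strategy is to derive a Pohozaev-type identity for $u_{r_k,\Theta}$ by testing the equation against a truncated version of $z\cdot\nabla u_{r_k,\Theta}$ (to respect $\partial\Omega_{r_k}$); the identity contains the term $\tfrac12\int\widetilde{V}(z)u_{r_k,\Theta}^2\,dz$, and the exponential lower bound in $(V_1)$, applied on the balls $B(z_k,\rho|z_k|)$ supporting the escaping mass, forces this term to dominate the remaining (uniformly bounded) contributions once $\tau$ is chosen smaller than the natural exponential decay rate of $u_{r_k,\Theta}$ around $z_k$. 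The resulting contradiction excludes mass loss at infinity and yields $\|u_\Theta\|_2^2=\Theta$.

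Once $u_\Theta\in S_\Theta$, standard elliptic regularity upgrades the weak solution $(\lambda_\Theta,u_\Theta)$ to the required $H^2$ solution, with $\lambda_\Theta>0$ inherited from the uniform bound in Theorem \ref{t1.2}(ii). I expect the Pohozaev step to be the main technical obstacle: one must handle the boundary terms generated by the truncation uniformly in $k$ and align the decay exponent $\tau$ in $(V_1)$ with the actual exponential decay of $u_{r_k,\Theta}$ around the putative escape points, both of which require careful elliptic decay estimates driven by the uniform positivity of $\lambda_{r_k,\Theta}$.
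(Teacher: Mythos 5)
Your overall route is exactly the one the paper intends: the paper does not actually write out a proof of Theorem \ref{t1.4} (Remark \ref{R1.3} defers it to the argument of \cite{TBAQ2023}), and Remark \ref{R1.1} confirms that the intended construction is to take $\Omega=B_1$, use the local-minimum solutions $(\lambda_{r,\Theta},u_{r,\Theta})$ of Theorem \ref{t1.2}, let $r\to\infty$, and invoke $(V_1)$ to prevent loss of mass. Your extraction of a weak limit, the passage to the limit in the weak formulation, the inheritance of $\lambda_\Theta>0$ from $\liminf_{r\to\infty}\lambda_{r,\Theta}>0$, and the identification of the Pohozaev-plus-$(V_1)$ step as the crux all match that template.

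The one point where your sketch conceals a genuine difficulty specific to this paper is the exponential decay of $u_{r,\Theta}$ around the putative escape points $z_k$, which your $(V_1)$-argument needs in order to make the term $\int \widetilde V\,u_{r,\Theta}^2$ dominate: for the second-order problem in \cite{TBAQ2023} this decay comes from a comparison/maximum-principle argument using $\lambda_{r,\Theta}>0$, but $\Delta^2$ admits no maximum principle, and the paper itself concedes (Remark \ref{R3.2}) that it cannot even control the sign of its solutions, so the barrier construction does not transfer. To close this you would need a different mechanism --- e.g.\ pointwise exponential decay of the kernel of $(\Delta^2+\lambda)^{-1}$ on $\mathbb{R}^d\times\mathbb{T}^n$ for $\lambda>0$ together with a bootstrap on the nonlinear terms, uniformly in $r$ (which also requires handling the Dirichlet boundary of $\Omega_r\times\mathbb{T}^n$, since the Green function of $\Delta^2$ on a domain is not sign-controlled). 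As written, ``standard elliptic decay estimates driven by the uniform positivity of $\lambda_{r_k,\Theta}$'' asserts precisely the step that is nonstandard here; the rest of your argument is sound and consistent with the paper's plan.
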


\begin{theorem}\label{t1.5}(\textbf{case $\mu > 0$})
Assume $V$ satisfies $(V_0)-(V_1)$. Then \eqref{eq1.1} with $\Omega=\mathbb{R}^d$ admits for $0<\Theta<\bar{\Theta}, \bar{\Theta}>0$ as in Theorem \ref{t1.3} (ii), a solution $(\lambda_\Theta, u_\Theta)$ with $\lambda_\Theta>0$. Moreover, $\lim\limits_{\Theta \rightarrow 0} I(u_\Theta)=\infty$.
\end{theorem}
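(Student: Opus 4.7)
The plan is to construct $u_\Theta$ as the limit, as $r\to\infty$, of the mountain pass type solutions $u_{r,\Theta}$ of Theorem \ref{t1.3} with $\Omega=B_1\subset\mathbb{R}^d$, so that $\Omega_r=B_r$ exhausts $\mathbb{R}^d$. After extending each $u_{r,\Theta}$ by $0$ to the full waveguide manifold, one regards them as a sequence in $S_\Theta\subset H^2(\mathbb{R}^d\times\mathbb{T}^n)$. Theorem \ref{t1.3}(i) supplies a uniform $L^\infty$-bound on the $u_{r,\Theta}$, Theorem \ref{t1.3}(ii) gives $\liminf_{r\to\infty}\lambda_{r,\Theta}>0$ provided $\Theta\leq\bar{\Theta}$, and testing the equation against $u_{r,\Theta}$ together with the mountain pass energy control furnishes a uniform $H^2$-bound as well as an upper bound for the $\lambda_{r,\Theta}$.

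Extracting a subsequence, $u_{r,\Theta}\rightharpoonup u_\Theta$ weakly in $H^2$ and $\lambda_{r,\Theta}\to\lambda_\Theta>0$, and $u_\Theta$ satisfies \eqref{eq1.1} with $\Omega=\mathbb{R}^d$ in the weak sense. The main obstacle is to verify that no mass is lost at infinity, i.e.\ $\|u_\Theta\|_2^2=\Theta$. This is exactly the point at which $(V_1)$ is invoked: I would test the equation against a cut-off of $z\cdot\nabla u_{r,\Theta}$ to obtain a localised Pohozaev-type identity containing $\int\widetilde V(z)u_{r,\Theta}^2\,dz$ restricted to an annulus at infinity; the super-exponential lower bound on $\inf_{z\in B(z_1,\rho|z_1|)}(z_1\cdot\nabla V(z))e^{\tau|z_1|}$ then makes this contribution dominate the uniformly bounded kinetic and nonlinear terms whenever a nontrivial portion of the mass remains at infinity, yielding a contradiction. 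Once mass preservation is secured, the uniform $L^\infty$-bound upgrades the convergence to strong convergence in $L^p\cap L^q$, and one may pass to the limit in both the equation and the energy identity.

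The final claim $\lim_{\Theta\to 0}I(u_\Theta)=\infty$ rests on the mountain pass structure. Since $I(u_\Theta)=\lim_{r\to\infty}I_r(u_{r,\Theta})$ and each $I_r(u_{r,\Theta})$ equals the mountain pass level $c_{r,\Theta}$ on $S_{r,\Theta}$, it suffices to show that the mountain pass level $c_\Theta$ on $S_\Theta$ diverges as $\Theta\to 0$. Estimating the energy on the fibre through any $u\in S_\Theta$ via Lemma \ref{L2.1} and using $\|u\|_2^2=\Theta$ yields
\[
I(u)\geq \tfrac{1}{2}\|\Delta u\|_2^2-A(\Theta)\|\Delta u\|_2^{\gamma_q}-B(\Theta)\|\Delta u\|_2^{\gamma_p},
\]
where $\gamma_q=(d+n)(q-2)/4>2$, $\gamma_p=(d+n)(p-2)/4<2$, and $A(\Theta),B(\Theta)$ are positive powers of $\Theta$ vanishing as $\Theta\to 0$. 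A short calculus computation on the one-variable right-hand side shows that its barrier height diverges as $\Theta\to 0^+$, yielding $c_\Theta\to\infty$ and completing the proof.
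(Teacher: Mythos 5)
The paper never writes out a proof of Theorem \ref{t1.5}: Remark \ref{R1.3} defers Theorems \ref{t1.4}--\ref{t1.6} entirely to \cite{TBAQ2023}. Your overall plan is nevertheless the intended one: exhaust $\mathbb{R}^d\times\mathbb{T}^n$ by $\Omega_r\times\mathbb{T}^n$ with $\Omega=B_1$, use the uniform $H^2$-bound of Lemma \ref{L5.3}, the uniform $L^\infty$-bound and $\liminf_{r\to\infty}\lambda_{r,\Theta}>0$ from Theorem \ref{t1.3}, pass to a weak limit solving the equation on $\mathbb{R}^d\times\mathbb{T}^n$, and read off $\lim_{\Theta\to 0}I(u_\Theta)=\infty$ from the lower bound on the mountain pass level. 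That last step is correct and in fact already contained in Lemma \ref{L5.1}(iii): the exponent $\frac{(d+n)(q-2)-4q}{(d+n)(q-2)-8}$ there is negative (numerator $q(d+n-4)-2(d+n)<0$ since $q<4^*$, denominator positive since $q>2+\frac{8}{d+n}$), so the level blows up as $\Theta\to 0$ uniformly in $r$ and $s$; your one-variable barrier computation reproduces this.

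The genuine gap is in the compactness step, which is the only place $(V_1)$ enters and the only new content of the theorem. As you describe it, the localized term $\int\widetilde V(z)\,u_{r,\Theta}^2$ over an annulus near infinity is supposed to \emph{dominate} the ``uniformly bounded kinetic and nonlinear terms'' of an escaping piece of mass. It cannot: $(V_1)$ only yields $\widetilde V(z)\geq c_\tau e^{-\tau|z_1|}$ on $B(z_1,\rho|z_1|)$, a lower bound that still tends to $0$ at infinity, while a bubble carrying mass $d>0$ contributes kinetic and nonlinear terms of order one in that region; so the inequality points the wrong way. The role of $(V_1)$ is the opposite one, a comparison of decay \emph{rates}: because $\lambda_\Theta>0$ and $V\to 0$, an escaping profile solves the potential-free limit equation and decays at a fixed exponential rate $e^{-c|\cdot|}$, which (via the difference between the Pohozaev identities with and without the $\widetilde V$-term) bounds $\int\widetilde V u_{r,\Theta}^2$ above by $Ce^{-c|z_r|}$, whereas $(V_1)$ together with the mass lower bound on $B(z_r,\rho|z_r|)$ bounds it below by $c_\tau e^{-\tau|z_r|}$ for every $\tau>0$; choosing $\tau<c$ gives the contradiction. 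Carrying this out here requires a uniform pointwise exponential decay estimate for solutions of the fourth-order problem, which is not automatic: the comparison-function argument used for $-\Delta+\lambda$ relies on the maximum principle, whose failure for $\Delta^2$ the paper itself flags in Remarks \ref{R3.1}--\ref{R3.2}. Your proposal neither states the correct mechanism nor addresses this decay estimate, and since the identity $I(u_\Theta)=\lim_{r\to\infty}I_r(u_{r,\Theta})$ also hinges on the resulting strong convergence, the argument is incomplete at its central point.
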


\begin{theorem}\label{t1.6}(\textbf{case $\mu \leq 0$})
Assume $V$ satisfies $(V_0)-(V_1)$, and $\|\widetilde{V}_{+}\|_{\frac{d+n}{4}}<2 S$. Then problem \eqref{eq1.1} with $\Omega=\mathbb{R}^d$ admits for $0<\Theta<\widetilde{\Theta}, \widetilde{\Theta}>0$ as in Theorem \ref{t1.1}, a solution $(\lambda_\Theta, u_\Theta)$ with $\lambda_\Theta>0$. Moreover, $\lim\limits_{\Theta\rightarrow 0} I\left(u_\Theta\right)=\infty$.
\end{theorem}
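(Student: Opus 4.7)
The plan is to obtain $(\lambda_\Theta, u_\Theta)$ as a limit of the mountain pass solutions produced by Theorem~\ref{t1.1}. Choosing $\Omega = B_1 \subset \mathbb{R}^d$, for each $r > r_\Theta$ let $(\lambda_{r,\Theta}, u_{r,\Theta})$ denote the solution on $\Omega_r \times \mathbb{T}^n$ provided by that theorem; since $\|\widetilde{V}_{+}\|_{\frac{d+n}{4}} < 2S$, part (ii) applies and gives $\liminf_{r \to \infty} \lambda_{r,\Theta} > 0$ for $\Theta < \widetilde{\Theta}$. Extending $u_{r,\Theta}$ by zero yields a family in $H^2(\mathbb{R}^d \times \mathbb{T}^n)$ with $\|u_{r,\Theta}\|_2^2 = \Theta$ and a uniform $L^\infty$ bound from Theorem~\ref{t1.1}(i). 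Testing the equation against $u_{r,\Theta}$ and invoking Lemma~\ref{L2.1} together with the sign condition $\mu \leq 0$, one obtains a uniform $H^2$ bound independent of $r$.

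Along a subsequence $r_k \to \infty$, I would extract $u_{r_k,\Theta} \rightharpoonup u_\Theta$ in $H^2(\mathbb{R}^d \times \mathbb{T}^n)$ and $\lambda_{r_k,\Theta} \to \lambda_\Theta > 0$. Passing to the weak limit in the Euler--Lagrange equation is routine on relatively compact subsets of $\mathbb{R}^d \times \mathbb{T}^n$; the heart of the argument is to rule out loss of mass at infinity along the $\mathbb{R}^d$ factor and upgrade to strong $L^2$ (hence, by interpolation with the $L^\infty$ bound, strong $L^p$ and $L^q$) convergence, so that $u_\Theta \in S_\Theta$. This is exactly what hypothesis $(V_1)$ is engineered for. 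Mirroring the strategy used for the $\mu > 0$ case in Theorem~\ref{t1.5}, I would derive a Pohozaev-type identity involving the term $\int \widetilde{V}(z)\, u_{r,\Theta}^2\, dz$; the exponentially weighted lower bound on $z_1 \cdot \nabla V(z)$ in $(V_1)$, combined with $V(z) \to 0$, forces the mass of $u_{r_k,\Theta}$ to remain localized in the $\mathbb{R}^d$ direction. A concentration-compactness analysis adapted to the mixed geometry of $\mathbb{R}^d \times \mathbb{T}^n$---where translations act only on the Euclidean factor---then excludes vanishing (impossible since $\lambda_\Theta > 0$ gives coercivity of the linear part) and dichotomy (incompatible with the Pohozaev identity and $(V_1)$).

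The main obstacle is precisely the exclusion of dichotomy in this waveguide setting: translated bumps escaping in $\mathbb{R}^d$ feel no $V$ contribution in the limit, so the mere decay of $V$ is insufficient, and the quantitative exponential lower bound in $(V_1)$ must be strong enough to absorb the nonlinear interaction terms and preclude any mass splitting. Once strong $L^2$ convergence is secured, the pair $(\lambda_\Theta, u_\Theta)$ solves \eqref{eq1.1} with $\Omega = \mathbb{R}^d$ and lies in $S_\Theta$, with $\lambda_\Theta > 0$. For the last assertion $\lim_{\Theta \to 0} I(u_\Theta) = \infty$, I would exploit the mountain pass characterization: using Lemma~\ref{L2.1} with the mass-supercritical exponent $q > 2 + \frac{8}{d+n}$, one shows that the mountain pass level $c_{r,\Theta}$ admits a lower bound $g(\Theta)$, uniform in $r$, with $g(\Theta) \to \infty$ as $\Theta \to 0$. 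Weak lower semicontinuity of $I$ combined with the $L^p, L^q$ strong convergence then transfers this blow-up to $I(u_\Theta)$, completing the proof.
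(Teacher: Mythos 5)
The paper does not actually prove Theorem \ref{t1.6}: Remark \ref{R1.3} omits the proofs of Theorems \ref{t1.4}--\ref{t1.6}, referring to \cite{TBAQ2023}, and Remark \ref{R1.1} records that the intended strategy is precisely yours, namely taking $\Omega=B_1$ and passing to the limit $r\to\infty$ in the solutions $(\lambda_{r,\Theta},u_{r,\Theta})$ of Theorem \ref{t1.1}, with $(V_1)$ supplying the compactness. So your overall plan is the right one. However, as written the proposal has two genuine gaps. First, the uniform $H^2$ bound cannot be obtained by ``testing the equation against $u_{r,\Theta}$ and invoking Lemma \ref{L2.1} together with $\mu\le 0$'': testing gives one identity containing the two unknowns $\|\Delta u_{r,\Theta}\|_2^2$ and $\lambda_{r,\Theta}$, and the Gagliardo--Nirenberg estimate of $\int|u|^q$ carries the exponent $\tfrac{(d+n)(q-2)}{4}>2$ (mass supercritical), so it cannot be absorbed. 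Nor can you first call on the $L^\infty$ bound of Theorem \ref{t1.1}(i) to control $\int|u|^q$: that bound is Lemma \ref{L3.5}, whose hypothesis is exactly a uniform $H^2$ bound, so your order of deductions is circular. The non-circular source of the bound is Lemma \ref{L3.3}, i.e.\ the Pohozaev identity combined with the level estimate $m_{r,s}(\Theta)\le h(T_\Theta)$; you should cite that instead.

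Second, the heart of the theorem --- ruling out loss of mass along the $\mathbb{R}^d$ factor so that $u_\Theta\in S_\Theta$ --- is only announced, not carried out. Saying that the exponential lower bound in $(V_1)$ ``forces the mass to remain localized'' and that dichotomy is ``incompatible with the Pohozaev identity and $(V_1)$'' identifies the correct ingredients but is not an argument: the model proof shows that an escaping bump converges to a nontrivial solution of the autonomous limit equation on $\mathbb{R}^d\times\mathbb{T}^n$ (half-space profiles being excluded by the Liouville theorem, as in Lemma \ref{L3.6}), and then derives a contradiction by comparing the Pohozaev identities of $u_{r,\Theta}$ and of the limit profile, playing the exponential decay of the solutions against the exponential weight $e^{\tau|z_1|}$ in $(V_1)$. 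Without this computation the claim $u_\Theta\ne 0$, $\|u_\Theta\|_2^2=\Theta$ is unsupported. A smaller point: for the final assertion, ``weak lower semicontinuity of $I$'' yields $I(u_\Theta)\le\liminf_k I(u_{r_k,\Theta})$, which is the wrong direction for transferring the divergence of the mountain pass lower bound in Lemma \ref{L3.1}(iii); what you need is the strong $H^2$ convergence $u_{r_k,\Theta}\to u_\Theta$ (which your compactness step, once completed, would give), so that $I(u_\Theta)=\lim_k m_{r_k,1}(\Theta)\ge g(\widetilde t)\to\infty$ as $\Theta\to 0$.
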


\begin{remark}\label{R1.3}
The proof of Theorems \ref{t1.4}-\ref{t1.6} is similar to \cite{TBAQ2023}, so we omit it in this paper.
\end{remark}

\begin{theorem}\label{t1.7}
The solutions obtained in Theorems \ref{t1.2} and \ref{t1.4} are orbitally stable in some
sense.
\end{theorem}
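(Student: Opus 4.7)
The plan is to adapt the classical Cazenave--Lions argument (see \cite{TCPLL1982}) to the present setting of local minimizers on the waveguide manifold, using the globally well-posed Hamiltonian flow that the paper postulates for the time-dependent equation associated with \eqref{eq1.1}. Fix $\Theta\in(0,\Theta_V)$ and let $\M_\Theta$ denote the set of $H^2$--normalized solutions produced by Theorem \ref{t1.2} (respectively Theorem \ref{t1.4} when $\Omega=\mathbb{R}^d$). By construction $\M_\Theta$ is contained in a closed ball $B_\rho\subset S_\Theta$ on which the energy attains a local infimum $m_V$, and this local infimum is separated by a strictly positive energy barrier from $S_\Theta\setminus B_\rho$. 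Along the Hamiltonian flow $t\mapsto u(t)$ both $\|u(t)\|_2^2$ and $I(u(t))$ are conserved.

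I would argue by contradiction. Assume there exist $\va_0>0$, a sequence of initial data $u_n^0\in H_0^2$ with $\mathrm{dist}_{H^2}(u_n^0,\M_\Theta)\to 0$, and times $t_n$ such that the corresponding solutions $u_n(t)$ satisfy $\mathrm{dist}_{H^2}(u_n(t_n),\M_\Theta)\geq\va_0$. Mass conservation gives $\|u_n(t_n)\|_2^2\to\Theta$, so after the harmless rescaling
$$
v_n:=\sqrt{\Theta/\|u_n(t_n)\|_2^2}\,u_n(t_n)\in S_\Theta,
$$
one has $v_n-u_n(t_n)\to 0$ in $H^2$; combined with energy conservation and continuity of $I$ in $H^2$, this yields $I(v_n)\to m_V$.

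The first genuinely non-routine step is to verify that $v_n\in B_\rho$ for all large $n$, so that $\{v_n\}$ is a bona fide minimizing sequence for the \emph{local} problem rather than an excursion over the barrier. Since $\mathrm{dist}_{H^2}(u_n^0,\M_\Theta)\to 0$ and the flow is continuous in $t$ with conserved energy strictly below the barrier height $m_V+\eta$ for large $n$, the orbit $u_n(\cdot)$ cannot exit $B_\rho$: this is the usual modification of Cazenave--Lions required when the variational solutions are only \emph{local} minima.

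The remaining task is a compactness argument producing a subsequence $v_{n_k}\to v\in\M_\Theta$ in $H^2$, which contradicts $\mathrm{dist}_{H^2}(v_n,\M_\Theta)\geq\va_0-o(1)$. In the setting of Theorem \ref{t1.2} ($r$ finite) this is essentially routine: the compact embedding $H_0^2(\Omega_r\times\mathbb{T}^n)\hookrightarrow L^q$ upgrades the weak limit of a subsequence to strong $L^p$ and $L^q$ convergence, and the Lagrange multiplier $\lambda_{r,\Theta}>0$ together with the equation controls $\|\Delta v_n\|_2$ to give the missing kinetic convergence. In the setting of Theorem \ref{t1.4} (the full waveguide $\mathbb{R}^d\times\mathbb{T}^n$) translation invariance in $x$ is the principal danger, and I expect this to be the main obstacle: hypothesis $(V_1)$ is precisely the ingredient that rules out vanishing and escape to infinity in Lions's concentration-compactness dichotomy, while the strict subadditivity $m_V(\Theta_1+\Theta_2)<m_V(\Theta_1)+m_V(\Theta_2)$ (inherited from the variational scheme) excludes splitting. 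The compactness argument will therefore closely parallel the one already used in the proof of Theorem \ref{t1.4}, with only cosmetic changes to accommodate the moving sequence $v_n$ instead of a direct minimizing sequence.
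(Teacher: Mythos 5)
Your proposal follows essentially the same route as the paper: the Cazenave--Lions contradiction argument with conservation of mass and energy, the rescaling $\tilde\psi_j=\sqrt{\Theta}\,\|\psi_j\|_2^{-1}\psi_j$ to produce a minimizing sequence for the local minimization level $e_\Theta$, and the compactness of minimizing sequences already established in the proof of Theorem \ref{t1.2} to reach a contradiction. If anything you are more careful than the paper, which does not explicitly verify that the rescaled orbit stays inside the ball $\{\|\Delta u\|_2\le R_1\}$ below the energy barrier (your $B_\rho$ step) and simply invokes the earlier compactness without distinguishing the bounded-domain and full-waveguide cases.
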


The structure of this paper is arranged as follows. In section 2, we obtain the mountain pass type solution in the case $\mu\leq 0$ and have completed the proof of Theorem \ref{t1.1}. If $\mu> 0$, there are two situations, that is, Theorems \ref{t1.2} and \ref{t1.3}. Finally, we consider the orbital stability of the solution obtained in Theorems \ref{t1.2} and \ref{t1.4}.

\section{Proof of Theorem \ref{t1.1}}
 In this section, we assume $\mu \leq 0$ and the assumptions of Theorem \ref{t1.1} hold. In order to obtain a bounded Palais-Smale sequence, we will use the monotonicity trick inspired by \cite{LJJ1999}. For $\frac{1}{2} \leq s \leq 1$, we define the functional $I_{r, s}: S_{r, \Theta} \rightarrow \mathbb{R}$ by
\begin{equation}\label{eq3.1}
  I_{r, s}(u)=\frac{1}{2} \int_{\Omega_r\times\mathbb{T}^n}[|\Delta u|^2+V(x,y)u^2] d xdy -\frac{s}{q} \int_{\Omega_r\times\mathbb{T}^n}|u|^q d xdy-\frac{\mu}{p} \int_{\Omega_r\times\mathbb{T}^n}|u|^p d xdy.
\end{equation}
Note that if $u \in S_{r, \Theta}$ is a critical point of $I_{r, s}$, then there exists $\lambda \in \mathbb{R}$ such that $(\lambda, u)$ is a solution of the equation
\begin{equation}\label{eq3.2}
 \left\{\aligned
&\Delta^2 u +V(x,y)u +\lambda u=\mu|u|^{p-2}u+s|u|^{q-2}u,\ (x, y) \in \Omega_r \times \mathbb{T}^n, \\
&\int_{\Omega_r\times\mathbb{T}^n}u^2dxdy=\Theta,\ u\in H_0^2(\Omega_r\times\mathbb{T}^n),
\endaligned
\right.
\end{equation}

\begin{lemma}\label{L3.1}
For any $\Theta>0$, there exist $r_\Theta>0$ and $u^0, u^1 \in S_{r_\Theta, \Theta}$ such that

(i) $I_{r, s}(u^1) \leq 0$ for any $r>r_\Theta$ and $s \in\left[\frac{1}{2}, 1\right]$,
$$
\left\|\Delta u^0\right\|_2^2<\left[\frac{2 q}{(d+n)(q-2) C_{d,n, q}}\left(1-\left\|V_{-}\right\|_{\frac{d+n}{4}} S^{-1}\right) \Theta^{\frac{q(d+n-2)-2 (d+n)}{4}}\right]^{\frac{4}{(d+n)(q-2)-4}}<\left\|\Delta u^1\right\|_2^2
$$
and
$$
I_{r, s}\left(u^0\right)<\frac{((d+n)(q-2)-4)\left(1-\left\|V_{-}\right\|_{\frac{d+n}{4}} S^{-1}\right)}{2 (d+n)(q-2)}\left[\frac{2 q\left(1-\left\|V_{-}\right\|_{\frac{d+n}{4}} S^{-1}\right)}{(d+n)(q-2) C_{d,n, q} \Theta^{\frac{2 q-(d+n)(q-2)}{4}}}\right]^{\frac{4}{(d+n)(q-2)-4}} .
$$

(ii) If $u \in S_{r, \Theta}$ satisfies
$$
\|\Delta u\|_2^2=\left[\frac{2 q}{(d+n)(q-2) C_{d,n, q}}\left(1-\left\|V_{-}\right\|_{\frac{d+n}{4}} S^{-1}\right) \Theta^{\frac{q(d+n-2)-2(d+n)}{4}}\right]^{\frac{4}{(d+n)(q-2)-4}},
$$
then there holds
$$
I_{r, s}(u) \geq \frac{((d+n)(q-2)-4)\left(1-\left\|V_{-}\right\|_{\frac{d+n}{4}} S^{-1}\right)}{2 (d+n)(q-2)}\left[\frac{2 q\left(1-\left\|V_{-}\right\|_{\frac{d+n}{4}} S^{-1}\right)}{(d+n)(q-2) C_{d,n, q} \Theta^{\frac{2 q-(d+n)(q-2)}{4}}}\right]^{\frac{4}{(d+n)(q-2)-4}} .
$$
(iii) Set
$$
m_{r, s}(\Theta)=\inf _{\gamma \in \Gamma_{r, \Theta}} \sup _{t \in[0,1]} I_{r, s}(\gamma(t))
$$
with
$$
\Gamma_{r, \Theta}=\left\{\gamma \in C\left([0,1], S_{r, \Theta}\right): \gamma(0)=u^0, \gamma(1)=u^1\right\} .
$$
Then
\begin{eqnarray*}
&&\frac{((d+n)(q-2)-4)\left(1-\left\|V_{-}\right\|_{\frac{d+n}{4}} S^{-1}\right)}{2(d+n)(q-2)}\left[\frac{2 q\left(1-\left\|V_{-}\right\|_{\frac{d+n}{4}} S^{-1}\right)}{(d+n)(q-2) C_{d,n, q} \Theta^{\frac{2 q-(d+n)(q-2)}{4}}}\right]^{\frac{4}{(d+n)(q-2)-4}} \\
&\leq& m_{r, s}(\Theta) \leq h(T_\Theta),
\end{eqnarray*}
where $h(T_\Theta)=\max\limits_{t \in \mathbb{R}^{+}} h(t)$, the function $h: \mathbb{R}^{+} \rightarrow \mathbb{R}$ being defined by
\begin{eqnarray*}
h(t)=\frac{1}{2}\left(1+\|V\|_{\frac{d+n}{4}} S^{-1}\right) t^4 \theta \Theta-\alpha\beta C_{d,n, p} \Theta^{\frac{p}{2}} \theta^{\frac{(d+n)(p-2)}{4}} t^{\frac{(d+n)(p-2)}{2}}-\frac{1}{2 q} \Theta^{\frac{q}{2}}|\Omega|^{\frac{2-q}{2}} t^{\frac{(d+n)(q-2)}{2}} .
\end{eqnarray*}
Here $\theta$ is the principal eigenvalue of $\Delta^2$ with Dirichlet boundary conditions in $\Omega\times\mathbb{T}^n$, and $|\Omega\times\mathbb{T}^n|$ is the volume of $\Omega\times\mathbb{T}^n$.
\end{lemma}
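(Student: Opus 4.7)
The plan is to establish a mountain--pass geometry on $S_{r,\Theta}$ that is uniform in $s\in[1/2,1]$ and in $r$ large. The core analytic input is the reduction of $I_{r,s}$ to a one--variable function of $\tau=\|\Delta u\|_2$: using H\"older together with the Sobolev embedding $H^2\hookrightarrow L^{2(d+n)/(d+n-4)}$ with sharp constant $S$, one controls the potential by
$$
\int V u^2\,dxdy\ \ge\ -\|V_-\|_{\frac{d+n}{4}} S^{-1}\|\Delta u\|_2^2,
$$
while Gagliardo--Nirenberg (Lemma \ref{L2.1}) gives $\|u\|_q^q\le C_{d,n,q}\|\Delta u\|_2^{(d+n)(q-2)/4}\Theta^{(4q-(d+n)(q-2))/8}$. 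Since $\mu\le0$, the $L^p$ term contributes non--negatively, and one obtains
$$
I_{r,s}(u)\ \ge\ \tfrac{1}{2}\bigl(1-\|V_-\|_{\frac{d+n}{4}} S^{-1}\bigr)\|\Delta u\|_2^2-\tfrac{C_{d,n,q}}{q}\Theta^{(4q-(d+n)(q-2))/8}\|\Delta u\|_2^{(d+n)(q-2)/4}.
$$
Because $(d+n)(q-2)/4>2$, the right--hand side $g(\tau)$ has a unique positive maximum; an elementary Lagrange computation locates this maximum at exactly the value of $\|\Delta u\|_2^2$ stated in (ii), with $g(\tau_*)$ equal to the displayed lower bound. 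This proves (ii).

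For (i), I would construct $u^0$ by taking the principal Dirichlet eigenfunction $\psi$ of $\Delta^2$ on $\Omega$ (regarded as $y$--independent on $\Omega\times\mathbb{T}^n$), normalized so $\|\psi\|_2^2=\Theta$, and setting $u^0(x,y)=r^{-d/2}\psi(x/r,y)$. This is in $S_{r,\Theta}$, has $\|\Delta u^0\|_2^2=r^{-4}\theta\Theta\to 0$, and the $L^p,L^q$ and potential terms also vanish (or stay bounded) as $r\to\infty$, so $I_{r,s}(u^0)\to 0$, which lies strictly below $g(\tau_*)>0$ for $r$ large. For $u^1$, I fix a bump $\varphi\in C_c^\infty(\Omega\times\mathbb{T}^n)$ (support in a single chart) with $\|\varphi\|_2^2=\Theta$ and apply the concentration scaling $\varphi_t(x,y)=t^{(d+n)/2}\varphi(tx,ty)$ valid for $t\ge 1$; then $\|\Delta\varphi_t\|_2^2=t^4\|\Delta\varphi\|_2^2$, $\|\varphi_t\|_q^q=t^{(d+n)(q-2)/2}\|\varphi\|_q^q$, and since $(d+n)(q-2)/2>4$ with $s\ge 1/2$, the $q$-term dominates as $t\to\infty$, giving $I_{r,s}(\varphi_T)\le 0$ and $\|\Delta\varphi_T\|_2^2>\tau_*^2$ for $T$ large enough.

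For (iii), the lower bound is automatic: any path $\gamma\in\Gamma_{r,\Theta}$ joins a point with $\|\Delta u\|_2^2<\tau_*^2$ to one with $\|\Delta u\|_2^2>\tau_*^2$, so by continuity of $\|\Delta\cdot\|_2$ on $H_0^2$ it must cross the sphere where (ii) applies, forcing $\sup_t I_{r,s}(\gamma(t))\ge g(\tau_*)$. For the upper bound I would take the natural single--parameter path $\gamma(t)=\varphi_t$ suitably reparametrized from $t\in[1/r,T]$ to $[0,1]$, with $\varphi$ chosen as the rescaled principal eigenfunction (so $\|\Delta\varphi\|_2^2=\theta\Theta$) to match the $t^4\theta\Theta$ coefficient; then bound $\int V\varphi_t^2\le\|V\|_{\frac{d+n}{4}} S^{-1}\|\Delta\varphi_t\|_2^2$, apply Gagliardo--Nirenberg to the $L^p$ term (yielding $t^{(d+n)(p-2)/2}$), and lower--bound the $L^q$ term via the Cauchy--Schwarz/H\"older estimate $\|\varphi_t\|_q^q\ge\Theta^{q/2}|\Omega\times\mathbb{T}^n|^{(2-q)/2}t^{(d+n)(q-2)/2}$ on the support. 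These put $I_{r,s}(\varphi_t)\le h(t)$, and maximising over $t$ gives $m_{r,s}(\Theta)\le h(T_\Theta)$.

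The step I expect to be the main obstacle is ensuring that the concentration scaling $\varphi_t$ behaves correctly across the full range $t\in[1/r,T]$ on the waveguide: for $t<1$ the $y$--support would exceed the torus, so one must either split the path (combining a spreading piece $u^0\rightsquigarrow\varphi_1$ with the concentrating piece $\varphi_1\rightsquigarrow\varphi_T$) or work with a $y$--independent test function whose Laplacian reduces to $\Delta_x$. Matching the precise constants and exponents appearing in the three displays of the Lemma, particularly the coefficient $\theta^{(d+n)(p-2)/4}$ and the factor $A_{p,q}$-type constants in $h(t)$, will require careful book--keeping but involves no conceptual difficulty beyond the Gagliardo--Nirenberg and H\"older estimates already invoked.
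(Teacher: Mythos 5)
Your proposal follows essentially the same route as the paper: the lower bounds in (ii) and (iii) come from the same one--variable reduction $g(\|\Delta u\|_2)$ obtained from H\"older, the Sobolev control of the potential, Gagliardo--Nirenberg and $\mu\le 0$, while the endpoints $u^0,u^1$ and the upper bound $m_{r,s}(\Theta)\le h(T_\Theta)$ come from a dilation path built on the principal eigenfunction of $\Delta^2$, exactly as in the paper. The obstacle you flag at the end is real --- the paper's own family $v_t(x,y)=t^{(d+n)/2}v_1(tx,ty)$ for $t<1$ lives on the dilated torus $\mathbb{T}^n_{1/t}$ rather than inside $\Omega_r\times\mathbb{T}^n$, a point the paper does not address --- so your proposed repair (scaling only in $x$, or splitting the path) is a sensible refinement rather than a gap in your argument, provided you redo the bookkeeping of the scaling exponents accordingly.
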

\begin{proof}
(i) Clearly, the set $S_{r, \Theta}$ is path connected. Since $v_1 \in S_{1, \Theta}$ be the positive eigenfunction associated to $\theta$ and note that $\theta$ is the principal eigenvalue of $\Delta^2$, then
\begin{equation}\label{eq3.3}
  \int_{\Omega\times\mathbb{T}^n}\left|\Delta v_1\right|^2 d xdy=\theta \Theta.
\end{equation}
By the H\"older inequality, we know that
\begin{eqnarray*}
  \Theta=\int_{\Omega\times\mathbb{T}^n}|v_1(x,y)|^2dxdy  \leq\left(\int_{\Omega\times\mathbb{T}^n}|v_1(x,y)|^{q} dxdy\right)^{\frac{2}{q}}\cdot|\Omega\times\mathbb{T}^n|^{\frac{q-2}{q}},
\end{eqnarray*}
which implies
\begin{equation}\label{eq3.4}
  \int_{\Omega\times\mathbb{T}^n}|v_1(x,y)|^{q} dxdy\geq\Theta^{\frac{q}{2}}\cdot|\Omega\times\mathbb{T}^n|^{\frac{2-q}{2}}.
\end{equation}
For $(x,y) \in \Omega_{\frac{1}{t}}\times\mathbb{T}_{\frac{1}{t}}^n$ and $t>0$, define $v_t(x,y):=t^{\frac{d+n}{2}} v_1(t x, ty)$. Using \eqref{eq3.3}, \eqref{eq3.4} and $\frac{1}{2}\leq s\leq1$, it holds
\begin{eqnarray}\label{eq3.6}
&&I_{\frac{1}{t}, s}\left(v_t\right)\nonumber\\
&\leq&\frac{1}{2} \int_{\Omega_{\frac{1}{t}}\times\mathbb{T}_{\frac{1}{t}}^n}[|\Delta v_t|^2+V(x,y)v_t^2] d xdy-\frac{1}{2q} \int_{\Omega_{\frac{1}{t}}\times\mathbb{T}_{\frac{1}{t}}^n}|v_t|^q d xdy- \frac{\mu}{p}  \int_{\Omega_{\frac{1}{t}}\times\mathbb{T}_{\frac{1}{t}}^n}|v_t|^{p} d xdy\nonumber\\
&\leq&\frac{1}{2}\left(1+\|V\|_{\frac{d+n}{4}} S^{-1}\right) \int_{\Omega_{\frac{1}{t}}\times\mathbb{T}_{\frac{1}{t}}^n}|\Delta v_t|^2 d x-\frac{1}{2q} \int_{\Omega_{\frac{1}{t}}\times\mathbb{T}_{\frac{1}{t}}^n}|v_t|^q d xdy\nonumber\\
&&-\frac{\mu}{p} C_{d,n, p} \Theta^{\frac{4 p-(d+n)(p-2)}{8}}\left( \int_{\Omega_{\frac{1}{t}}\times\mathbb{T}_{\frac{1}{t}}^n}\left|\Delta v_t\right|^2 d xdy\right)^{\frac{(d+n)(p-2)}{8}}\nonumber\\
&\leq&\frac{1}{2}\left(1+\|V\|_{\frac{d+n}{4}} S^{-1}\right) t^4 \int_{\Omega\times\mathbb{T}^n}\left|\Delta v_1\right|^2 d xdy-\frac{1}{2q} t^{\frac{(d+n)(q-2)}{2}} \int_{\Omega\times\mathbb{T}^n}\left|v_1\right|^q d xdy \nonumber\\
&&-\frac{\mu}{p} C_{d,n, p} \Theta^{\frac{4 p-(d+n)(p-2)}{8}}\left(t^4 \int_{\Omega\times\mathbb{T}^n}\left|\Delta v_1\right|^2 d xdy\right)^{\frac{(d+n)(p-2)}{8}} \nonumber\\
&\leq & \frac{1}{2}\left(1+\|V\|_{\frac{d+n}{4}} S^{-1}\right) t^4 \theta\Theta-\frac{\mu}{p} C_{d,n, p} \Theta^{\frac{p}{2}}\theta^{\frac{(d+n)(p-2)}{8}}t^{\frac{(d+n)(p-2)}{2}}-\frac{1}{2q} t^{\frac{(d+n)(q-2)}{2}} \Theta^{\frac{q}{2}}\cdot|\Omega|^{\frac{2-q}{2}} \nonumber\\
&=: & h(t) .
\end{eqnarray}
Note that since $2<p<2+\frac{8}{d+n}<q<4^*$ and $\mu\leq0$ there exist $0<T_\Theta<t_0$ such that $h\left(t_0\right)=0, h(t)<0$ for any $t>t_0, h(t)>0$ for any $0<t<t_0$ and $h\left(T_\Theta\right)=\max\limits_{t \in \mathbb{R}^{+}} h(t)$. As a consequence, there holds
\begin{equation}\label{eq3.7}
  I_{r, s}\left(v_{t_0}\right)=I_{\frac{1}{t_0}, s}\left(v_{t_0}\right) \leq h\left(t_0\right)=0
\end{equation}
for any $r \geq \frac{1}{t_0}$ and $s \in\left[\frac{1}{2}, 1\right]$. Moreover, there exists $0<t_1<T_\Theta$ such that
\begin{equation}\label{eq3.8}
 h(t)<\frac{((d+n)(q-2)-4)\left(1-\left\|V_{-}\right\|_{\frac{d+n}{4}} S^{-1}\right)}{2 (d+n)(q-2)}\left[\frac{2 q\left(1-\left\|V_{-}\right\|_{\frac{d+n}{4}} S^{-1}\right)}{(d+n)(q-2) C_{d,n, q} \Theta^{\frac{2 q-(d+n)(q-2)}{4}}}\right]^{\frac{4}{(d+n)(q-2)-4}}
\end{equation}
for $t \in\left[0, t_1\right]$. On the other hand, it follows from the Gagliardo-Nirenberg inequality and the H\"older inequality that
\begin{eqnarray}\label{eq3.9}
&&I_{r, s}(u)\nonumber\\
&\geq&\frac{1}{2} \int_{\Omega_r\times\mathbb{T}^n}|\Delta u|^2 d xdy+\frac{1}{2} \int_{\Omega_r\times\mathbb{T}^n} V u^2 d xdy-\frac{1}{q} \int_{\Omega_r\times\mathbb{T}^n}|u|^q d xdy\nonumber\\
&\geq& \frac{1-\left\|V_{-}\right\|_{\frac{d+n}{4}} S^{-1}}{2} \int_{\Omega_r\times\mathbb{T}^n}|\Delta u|^2 d xdy-\frac{C_{d,n, q} \Theta^{\frac{4 q-(d+n)(q-2)}{8}}}{q}\left(\int_{\Omega_r\times\mathbb{T}^n}|\Delta u|^2 d xdy\right)^{\frac{(d+n)(q-2)}{8}}.
\end{eqnarray}
Define
$$
g(t):=\frac{1}{2}\left(1-\left\|V_{-}\right\|_{\frac{d+n}{4}} S^{-1}\right) t-\frac{C_{d,n, q} \Theta^{\frac{4 q-(d+n)(q-2)}{8}}}{q} t^{\frac{(d+n)(q-2)}{8}}
$$
and
$$
\widetilde{t}=\left[\frac{4 q}{(d+n)(q-2) C_{d,n, q}}\left(1-\left\|V_{-}\right\|_{\frac{d+n}{4}} S^{-1}\right) \Theta^{\frac{q(d+n-4)-2(d+n)}{8}}\right]^{\frac{8}{(d+n)(q-2)-8}},
$$
it is easy to see that $g$ is increasing on $(0, \widetilde{t})$ and decreasing on $(\widetilde{t}, \infty)$, and
$$
g(\widetilde{t})=\frac{((d+n)(q-2)-4)\left(1-\left\|V_{-}\right\|_{\frac{d+n}{4}} S^{-1}\right)}{2 (d+n)(q-2)}\left[\frac{4 q\left(1-\left\|V_{-}\right\|_{\frac{d+n}{4}} S^{-1}\right)}{(d+n)(q-2) C_{d,n, q} \Theta^{\frac{4 q-(d+n)(q-2)}{8}}}\right]^{\frac{8}{(d+n)(q-2)-8}} .
$$
For $r \geq \widetilde{r}_\Theta:=\max \left\{\frac{1}{t_1}, \sqrt{\frac{2 \theta \Theta}{\tilde{t}}}\right\}$, we have $v_{\frac{1}{\tilde{r}_\Theta}} \in S_{r, \Theta}$ and
\begin{eqnarray}\label{eq3.10}
\|\Delta v_{\frac{1}{\widetilde{r}_\Theta}}\|_2^2&=&\left(\frac{1}{\widetilde{r}_\Theta}\right)^4\left\|\Delta v_1\right\|_2^2\nonumber\\
&<&\left[\frac{4 q}{(d+n)(q-2) C_{d,n, q}}\left(1-\left\|V_{-}\right\|_{\frac{d+n}{4}} S^{-1}\right) \Theta^{\frac{q(d+n-4)-2(d+n)}{8}}\right]^{\frac{8}{(d+n)(q-2)-8}} .
\end{eqnarray}
Moreover, there holds
\begin{equation}\label{eq3.11}
 I_{\widetilde{r}_\Theta, s}\left(v_{\frac{1}{\widetilde{r}_\Theta}}\right) \leq h\left(\frac{1}{\widetilde{r}_\Theta}\right) \leq h\left(t_1\right) .
\end{equation}
Setting $u^0=v_{\frac{1}{\tilde{r}_\Theta}}, u^1=v_{t_0}$ and
\begin{equation}\label{eq3.12}
  r_\Theta=\max \left\{\frac{1}{t_0}, \widetilde{r}_\Theta\right\}.
\end{equation}
Combining \eqref{eq3.7}, \eqref{eq3.8}, \eqref{eq3.10} and \eqref{eq3.11}, (i) holds.

(ii) By \eqref{eq3.9} and a direct calculation, (ii) holds.

(iii) Since $I_{r, s}\left(u^1\right) \leq 0$ for any $\gamma \in \Gamma_{r, \Theta}$, we have
$$
\|\Delta \gamma(0)\|_2^2<\tilde{t}<\|\Delta \gamma(1)\|_2^2.
$$
It then follows from \eqref{eq3.9} that
$$
\begin{aligned}
&\max\limits_{t \in[0,1]} I_{r, s}(\gamma(t)) \\
& \geq g(\widetilde{t}) \\
& =\frac{((d+n)(q-2)-4)\left(1-\left\|V_{-}\right\|_{\frac{d+n}{4}} S^{-1}\right)}{2 (d+n)(q-2)}\left[\frac{4 q\left(1-\left\|V_{-}\right\|_{\frac{d+n}{4}} S^{-1}\right)}{(d+n)(q-2) C_{d,n, q} \Theta^{\frac{4 q-(d+n)(q-2)}{8}}}\right]^{\frac{8}{(d+n)(q-2)-8}}
\end{aligned}
$$
for any $\gamma \in \Gamma_{r, \Theta}$, hence the first inequality in (iii) holds. Now we define a path $\gamma \in \Gamma_{r, \Theta}$ by
$$
\gamma(\tau)(x,y)=\left(\tau t_0+(1-\tau) \frac{1}{\widetilde{r}_\Theta}\right)^{\frac{d+n}{2}} v_1\left(\left(\tau t_0+(1-\tau) \frac{1}{\widetilde{r}_\Theta}\right) x,\left(\tau t_0+(1-\tau) \frac{1}{\widetilde{r}_\Theta}\right) y\right)
$$
for $\tau \in[0,1]$ and $x \in \Omega_r$. Then by \eqref{eq3.6} we have $m_{r, s}(\Theta) \leq h(T_\Theta)$, where $h(T_\Theta)=\max\limits_{t \in \mathbb{R}^{+}} h(t)$. Note that $T_\Theta$ is independent of $r$ and $s$.
\end{proof}

By using Lemma \ref{L3.1}, the energy functional $I_{r, s}$ possesses the mountain pass geometry. To obtain bounded Palais-Smale sequence, we recall a proposition from \cite{{JBXC2022},{XCLJ2022}}.

\begin{proposition}\label{p3.1}
{\em(see \cite[Theorem 1]{{JBXC2022}})} Let $(E,\langle\cdot, \cdot\rangle)$ and $(H,(\cdot, \cdot))$ be two infinite-dimensional Hilbert spaces and assume there are continuous injections
$$
E \hookrightarrow H \hookrightarrow E^{\prime} .
$$
Let
$$
\|u\|^2=\langle u, u\rangle, \quad|u|^2=(u, u) \quad \text { for } u \in E,
$$
and
$$
S_\mu=\left\{u \in E:|u|^2=\mu\right\}, \quad T_u S_\mu=\{v \in E:(u, v)=0\} \quad \text { for } \mu \in(0,+\infty) .
$$
Let $I \subset(0,+\infty)$ be an interval and consider a family of $C^2$ functionals $\Phi_\rho: E \rightarrow \mathbb{R}$ of the form
$$
\Phi_\rho(u)=A(u)-\rho B(u), \quad \text { for } \rho \in I,
$$
with $B(u) \geq 0$ for every $u \in E$, and
\begin{equation}\label{eq3.13}
  A(u) \rightarrow+\infty \quad \text { or } \quad B(u) \rightarrow+\infty \quad \text { as } u \in E \text { and }\|u\| \rightarrow+\infty.
\end{equation}
Suppose moreover that $\Phi_\rho^{\prime}$ and $\Phi_\rho^{\prime \prime}$ are $\tau$-H\"older continuous, $\tau \in(0,1]$, on bounded sets in the following sense: for every $R>0$ there exists $M=M(R)>0$ such that
\begin{equation}\label{eq3.14}
\left\|\Phi_\rho^{\prime}(u)-\Phi_\rho^{\prime}(v)\right\| \leq M\|u-v\|^\tau \quad \text { and } \quad\left\|\Phi_\rho^{\prime \prime}(u)-\Phi_\rho^{\prime \prime}(v)\right\| \leq M\|u-v\|^\tau
\end{equation}
for every $u, v \in B(0, R)$. Finally, suppose that there exist $w_1, w_2 \in S_\mu$ independent of $\rho$ such that
$$
c_\rho:=\inf _{\gamma \in \Gamma} \max _{t \in[0,1]} \Phi_\rho(\gamma(t))>\max \left\{\Phi_\rho\left(w_1\right), \Phi_\rho\left(w_2\right)\right\} \quad \text { for all } \rho \in I,
$$
where
$$
\Gamma=\left\{\gamma \in C\left([0,1], S_\mu\right): \gamma(0)=w_1, \gamma(1)=w_2\right\} .
$$
Then for almost every $\rho \in I$, there exists a sequence $\left\{u_n\right\} \subset S_\mu$ such that

(i) $\Phi_\rho\left(u_n\right) \rightarrow c_\rho$,

(ii) $\left.\Phi_\rho^{\prime}\right|_{S_\mu}\left(u_n\right) \rightarrow 0$,

(iii) $\left\{u_n\right\}$ is bounded in $E$.
\end{proposition}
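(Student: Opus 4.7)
The plan is to follow the monotonicity trick of Jeanjean. The starting observation is that because $B(u) \geq 0$, the function $\rho \mapsto \Phi_\rho(u)$ is non-increasing for every fixed $u \in E$, and hence so is $\rho \mapsto c_\rho$ on $I$. By Lebesgue's theorem on monotone real functions, $c_\rho$ is then differentiable almost everywhere on $I$, and the conclusion of the proposition need only be established at an arbitrary point $\rho_0 \in I$ where $c'_{\rho_0}$ exists.

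Fix such a $\rho_0$ and pick $\rho_n \nearrow \rho_0$. By definition of the minimax level, one may choose paths $\gamma_n \in \Gamma$ with $\max_{t \in [0,1]} \Phi_{\rho_n}(\gamma_n(t)) \leq c_{\rho_n} + (\rho_0 - \rho_n)$. The identity $\Phi_{\rho_0}(u) = \Phi_{\rho_n}(u) - (\rho_0 - \rho_n) B(u)$, combined with the near-linear bound $c_{\rho_n} - c_{\rho_0} \leq (|c'_{\rho_0}| + 1)(\rho_0 - \rho_n)$ coming from differentiability at $\rho_0$, forces $B(\gamma_n(t))$ to be uniformly bounded on the subset $T_n := \{t \in [0,1] : \Phi_{\rho_0}(\gamma_n(t)) \geq c_{\rho_0} - (\rho_0 - \rho_n)\}$. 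Through the coercivity assumption \eqref{eq3.13}, this translates into a uniform bound $\|\gamma_n(t)\| \leq R$ for $t \in T_n$, where $R$ depends only on $c'_{\rho_0}$. The strict mountain-pass separation $c_\rho > \max\{\Phi_\rho(w_1), \Phi_\rho(w_2)\}$ additionally ensures that $T_n$ is a compact subset of the open interval $(0,1)$.

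With these bounded approximating paths in hand, I would close the argument by a minimax deformation on the Hilbert manifold $S_\mu$, localised to the ball $B(0, 2R) \subset E$. The $\tau$-H\"older regularity \eqref{eq3.14} of $\Phi_\rho'$ and $\Phi_\rho''$ is needed precisely here: it supplies the smoothness required to construct a locally Lipschitz pseudo-gradient field tangent to $S_\mu$ and to solve the associated deformation ODE on bounded sets with controlled displacement. Arguing by contradiction, if no bounded Palais--Smale sequence for $\left.\Phi_{\rho_0}\right|_{S_\mu}$ existed at the level $c_{\rho_0}$, a quantitative deformation lemma applied to $\gamma_n$ would strictly lower $\max_t \Phi_{\rho_0}(\gamma_n(t))$ on $T_n$ by a definite amount, hence also lower $\max_t \Phi_{\rho_n}(\gamma_n(t))$ below $c_{\rho_n}$, contradicting the choice of $\gamma_n$.

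The main obstacle is this last step: executing the pseudo-gradient deformation while simultaneously keeping the iterates on $S_\mu$, preserving the ball of radius $2R$, and leaving the endpoints $w_1, w_2$ fixed (which lie outside the dangerous region by the strict minimax separation). The $C^2$ hypothesis together with the H\"older control on $\Phi_\rho''$ are what make the orthogonal projection onto the tangent spaces $T_u S_\mu$ uniformly invertible on bounded sets; this regularity is essential for closing the monotonicity trick on a manifold rather than on the whole of $E$, and it is the reason the hypotheses have been stated in the rather technical form above.
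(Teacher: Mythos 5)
The paper does not actually prove Proposition \ref{p3.1}: it is imported verbatim as \cite[Theorem 1]{JBXC2022} and used as a black box, so there is no internal proof to compare against. Your sketch reproduces the architecture of the proof in that reference (Jeanjean's monotonicity trick adapted to the constraint $S_\mu$): monotonicity and a.e.\ differentiability of $\rho\mapsto c_\rho$, uniformly bounded almost-optimal paths at a point of differentiability, and a quantitative deformation on $S_\mu$ localised to a ball to extract the Palais--Smale sequence. That is the right skeleton.

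There is one genuine gap in the boundedness step. Hypothesis \eqref{eq3.13} is a disjunction --- $A(u)\to+\infty$ \emph{or} $B(u)\to+\infty$ as $\|u\|\to\infty$ --- so its contrapositive only yields $\|u\|\le R$ when \emph{both} $A(u)$ and $B(u)$ are bounded. You derive a uniform bound on $B(\gamma_n(t))$ for $t\in T_n$ and then invoke \eqref{eq3.13}; as written this is insufficient, since $A$ could a priori be very negative on a sequence with $\|\gamma_n(t)\|\to\infty$ while $B$ stays bounded. The fix is one line you already have the ingredients for: $A(\gamma_n(t))=\Phi_{\rho_n}(\gamma_n(t))+\rho_n B(\gamma_n(t))\le c_{\rho_n}+(\rho_0-\rho_n)+\rho_n\bigl(|c'_{\rho_0}|+3\bigr)$ on $T_n$, so both quantities are uniformly bounded there and \eqref{eq3.13} then gives the radius $R$. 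Two smaller remarks: the exclusion of the endpoints from $T_n$ requires taking $\rho_0-\rho_n$ small enough that $c_{\rho_0}-(\rho_0-\rho_n)>\max\{\Phi_{\rho_0}(w_1),\Phi_{\rho_0}(w_2)\}$, which the strict separation hypothesis permits but which should be said; and the $C^2$/H\"older conditions \eqref{eq3.14} are not really what makes the tangential projection work --- they are carried over from the cited theorem because its full statement also produces approximate Morse-index information, whereas for conclusions (i)--(iii) alone a $C^1$ pseudo-gradient construction on $S_\mu$ suffices.
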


\begin{lemma}\label{L3.2}
For any $\Theta>0$, let $r>r_\Theta$, where $r_\Theta$ is defined in Lemma \ref{L3.1}. Then problem \eqref{eq3.1} has a solution $\left(\lambda_{r, s}, u_{r, s}\right)$ for almost every $s \in\left[\frac{1}{2}, 1\right]$. Moreover, $u_{r, s} \geq 0$ and $I_{r, s}\left(u_{r, s}\right)=m_{r, s}(\Theta)$.
\end{lemma}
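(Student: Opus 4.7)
The plan is to apply the monotonicity trick Proposition \ref{p3.1} with $E = H_0^2(\Omega_r\times\mathbb{T}^n)$, $H = L^2(\Omega_r\times\mathbb{T}^n)$, mass $\Theta$, parameter interval $[\tfrac{1}{2},1]$, and the family $\Phi_s = I_{r,s}$. Writing $\Phi_s = A - sB$ with $B(u) = \tfrac{1}{q}\|u\|_q^q \geq 0$ and
$$A(u) = \tfrac{1}{2}\int_{\Omega_r\times\mathbb{T}^n}(|\Delta u|^2+Vu^2)\,dxdy - \tfrac{\mu}{p}\|u\|_p^p,$$
the assumptions $\mu\leq 0$ and $\|V_-\|_{(d+n)/4}<S$ together with Lemma \ref{L2.1} make $A$ coercive in $\|\Delta u\|_2$ on $S_{r,\Theta}$, so the growth condition \eqref{eq3.13} holds. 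The $C^2$ regularity and local $\tau$-H\"older bound \eqref{eq3.14} for $\Phi_s'$, $\Phi_s''$ follow from $V\in L^\infty$ and $2<p<q<4^*$ via Sobolev embeddings, with exponent $\tau = \min\{1,p-2,q-2\}>0$. The uniform mountain-pass geometry with fixed endpoints $w_1 = u^0$, $w_2 = u^1$ (independent of $s$) is precisely the content of Lemma \ref{L3.1}: part (iii) furnishes a positive lower bound $c^*$ for $m_{r,s}(\Theta)$ independent of $s$, while (i) yields $I_{r,s}(u^0)<c^*$ and $I_{r,s}(u^1)\leq 0$. Proposition \ref{p3.1} therefore produces, for a.e. $s\in[\tfrac{1}{2},1]$, a bounded Palais--Smale sequence $\{u_n\}\subset S_{r,\Theta}$ at the level $m_{r,s}(\Theta)$.

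I would then extract a critical point by the compactness of the inclusion $H_0^2(\Omega_r\times\mathbb{T}^n)\hookrightarrow L^t$ for every $t\in[2,4^*)$, which holds because $\Omega_r$ is bounded and $\mathbb{T}^n$ is compact. Along a subsequence $u_n \rightharpoonup u_{r,s}$ in $H_0^2$ and $u_n \to u_{r,s}$ strongly in $L^2$, $L^p$, $L^q$; in particular $\|u_{r,s}\|_2^2 = \Theta$. The Lagrange multiplier rule yields scalars $\lambda_n$ such that $I_{r,s}'(u_n)+\lambda_n u_n\to 0$ in $H^{-2}$; testing against $u_n$ and using boundedness in $H_0^2$ shows $\{\lambda_n\}$ is bounded, so $\lambda_n\to \lambda_{r,s}$ and $(\lambda_{r,s}, u_{r,s})$ solves \eqref{eq3.2} in the weak limit. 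Testing the difference of the equations for $u_n$ and $u_{r,s}$ against $u_n - u_{r,s}$ and using the strong $L^t$-convergence with $V\in L^\infty$ then gives $\|\Delta(u_n-u_{r,s})\|_2\to 0$, hence strong convergence in $H_0^2$ and $I_{r,s}(u_{r,s}) = m_{r,s}(\Theta)$.

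To ensure $u_{r,s}\geq 0$, I would work throughout with the modified functional obtained by replacing $|u|^p$, $|u|^q$ by $(u_+)^p$, $(u_+)^q$ in $I_{r,s}$. This leaves the mountain-pass value unchanged, because the endpoints $u^0, u^1$ and the entire path in Lemma \ref{L3.1}(iii) are constructed from the positive principal eigenfunction $v_1$ and are therefore nonnegative, so both functionals agree on the path. Any critical point $u$ of the modified functional satisfies $\Delta^2 u + Vu + \lambda u = s(u_+)^{q-1}+\mu(u_+)^{p-1}$, and testing against a suitable approximation of $u_-$ within $H_0^2$ annihilates the right-hand side while leaving a positive-definite quadratic form on the left, forcing $u_-\equiv 0$.

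The step I expect to be the main obstacle is precisely this last nonnegativity argument: unlike the second-order case, $|u|$ need not belong to $H_0^2$ and no general biharmonic maximum principle is available, so one cannot simply test the equation with $u_-$. This forces the modified-functional detour above together with a careful truncation scheme, taking into account the Dirichlet boundary on $\Omega_r$ and the periodic directions on $\mathbb{T}^n$; the remaining ingredients (verification of the hypotheses of Proposition \ref{p3.1}, compactness, Lagrange multiplier extraction, and strong $H_0^2$-convergence) are standard in the bounded-domain setting.
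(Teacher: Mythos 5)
Your existence argument coincides with the paper's proof of this lemma: the authors apply Proposition \ref{p3.1} with exactly the splitting $A(u)=\frac12\int(|\Delta u|^2+Vu^2)\,dxdy-\frac{\mu}{p}\int|u|^p\,dxdy$ and $B(u)=\frac1q\int|u|^q\,dxdy$ (valid since $\mu\le 0$), invoke Lemma \ref{L3.1} for the uniform mountain--pass geometry, and then recover a critical point at level $m_{r,s}(\Theta)$ from the bounded Palais--Smale sequence via the compact embedding $H_0^2(\Omega_r\times\mathbb{T}^n)\hookrightarrow L^t$, $2\le t<4^*$, bounded Lagrange multipliers, and the standard upgrade to strong $H_0^2$-convergence. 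That part of your proposal is correct and essentially identical to the paper.

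The genuine gap is the nonnegativity claim $u_{r,s}\ge 0$, and your proposed repair does not close it. After passing to the modified functional with $(u_+)^p,(u_+)^q$, a critical point $u$ solves $\Delta^2u+Vu+\lambda u=s(u_+)^{q-1}+\mu(u_+)^{p-1}$, and you then want to ``test against a suitable approximation of $u_-$'' so that the right-hand side vanishes ``while leaving a positive-definite quadratic form on the left.'' This is precisely where the second-order argument breaks for $\Delta^2$: in general $u_-\notin H_0^2(\Omega_r\times\mathbb{T}^n)$ (its second derivatives carry a singular part on the nodal set), so $\langle\Delta^2u,u_-\rangle$ is not even a well-defined duality pairing on the form domain; and even for smooth approximations $\varphi_\varepsilon\approx u_-$ the quantity $\int\Delta u\,\Delta\varphi_\varepsilon$ has no sign --- there is no fourth-order analogue of the identity $\int\nabla u\cdot\nabla u_-=-\int|\nabla u_-|^2$, nor a maximum/comparison principle for $\Delta^2$ on general domains. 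No ``careful truncation scheme'' is known to circumvent this, and the paper itself says so: Remarks \ref{R3.1} and \ref{R3.2} state explicitly that the nonnegativity available for $-\Delta$ cannot be obtained here because the relation between $\|\Delta u\|_2$ and $\|\Delta|u|\|_2$ is unknown, and accordingly the paper's own proof of Lemma \ref{L3.2} never addresses the assertion $u_{r,s}\ge 0$ (which appears to be a leftover from the second-order model in the lemma's statement). So your instinct that this is the main obstacle is right, but the step as proposed would fail; the correct resolution is to drop the sign claim, as the authors effectively do.
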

\begin{proof}
By Proposition \ref{p3.1}, it follows that
$$
A(u)=\frac{1}{2} \int_{\Omega_r\times\mathbb{T}^n}|\Delta u|^2 d xdy+\frac{1}{2} \int_{\Omega_r\times\mathbb{T}^n} V(x) u^2 d xdy-\frac{\mu}{p} \int_{\Omega_r\times\mathbb{T}^n}|u|^p d xdy
$$
and
\begin{equation*}
  B(u)=\frac{1}{q} \int_{\Omega_r\times\mathbb{T}^n}|u|^q d xdy.
\end{equation*}
Note that the assumptions in Proposition \ref{p3.1} hold due to $\mu \leq 0$ and Lemma \ref{L3.1}. Hence, for almost every $s \in\left[\frac{1}{2}, 1\right]$, there exists a bounded Palais-Smale sequence $\left\{u_n\right\}$ satisfying
$$
I_{r, s}\left(u_n\right) \rightarrow m_{r, s}(\Theta) \quad \text { and }\left.\quad I_{r, s}^{\prime}\left(u_n\right)\right|_{T_{u_n} S_{r, \Theta}} \rightarrow 0,
$$
where $T_{u_n} S_{r, \Theta}$ denotes the tangent space of $S_{r, \Theta}$ at $u_n$. Then
$$
\lambda_n=-\frac{1}{\Theta}\left(\int_{\Omega_r\times\mathbb{T}^n}\left(\left|\Delta u_n\right|^2 +V(x) u_n^2\right) d xdy-\mu \int_{\Omega_r\times\mathbb{T}^n}|u_n|^p d xdy-s \int_{\Omega_r\times\mathbb{T}^n}\left|u_n\right|^q d xdy\right)
$$
is bounded and
\begin{equation}\label{eq3.15}
  I_{r, s}^{\prime}\left(u_n\right)+\lambda_n u_n \rightarrow 0 \quad \text { in } H^{-2}\left(\Omega_r\times\mathbb{T}^n\right) .
\end{equation}
Moreover, since $\left\{u_n\right\}$ is a bounded Palais-Smale sequence, there exist $u_0 \in H_0^1\left(\Omega_r\times\mathbb{T}^n\right)$ and $\lambda \in \mathbb{R}$ such that, up to a subsequence,
\begin{eqnarray*}
\lambda_n &\rightarrow& \lambda \ \text { in }\ \mathbb{R},\\
u_n &\rightharpoonup& u_0 \  \text { in }\ H_0^2(\Omega_r\times\mathbb{T}^n),\\
 u_n &\rightarrow& u_0 \  \text { in }\ L^t(\Omega_r\times\mathbb{T}^n) \text { for all } 2 \leq t<4^*,
\end{eqnarray*}
where $u_0$ satisfies
$$
\left\{\begin{array}{l}
\Delta^2 u_0+V u_0+\lambda u_0=s\left|u_0\right|^{q-2} u_0+\mu \left|u_0\right|^{p-2} u_0 \quad \text { in } \Omega_r\times\mathbb{T}^n, \\
u_0 \in H_0^2\left(\Omega_r\times\mathbb{T}^n\right), \quad \int_{\Omega_r\times\mathbb{T}^n}\left|u_0\right|^2 d xdy=\Theta.
\end{array}\right.
$$
Using \eqref{eq3.15}, we have
$$
I_{r, s}^{\prime}\left(u_n\right) u_0+\lambda_n \int_{\Omega_r\times\mathbb{T}^n} u_n u_0 d xdy \rightarrow 0 \text { as } n \rightarrow \infty
$$
and
\begin{equation*}
  I_{r, s}^{\prime}\left(u_n\right) u_n+\lambda_n \Theta\rightarrow 0 \ \text{as }\  n \rightarrow \infty  .
\end{equation*}
Note that
\begin{eqnarray*}
\lim _{n \rightarrow \infty} \int_{\Omega_r\times\mathbb{T}^n} V(x,y) u_n^2 d x&=&\int_{\Omega_r\times\mathbb{T}^n} V(x,y) u_0^2 d xdy,\\
\lim _{n \rightarrow \infty} \int_{\Omega_r\times\mathbb{T}^n} |u_n|^p d x&=&\int_{\Omega_r\times\mathbb{T}^n} |u_0|^p d x,
\end{eqnarray*}
so we get $u_n \rightarrow u_0$ in $H_0^2(\Omega_r\times\mathbb{T}^n)$, hence $I_{r, s}(u_0)=m_{r, s}(\Theta)$.
\end{proof}

\begin{remark}\label{R3.1}
If we consider schr\"odinger operator $-\Delta$, then we can get better result, that is $u_{r, s} \geq 0$, see \cite{TBAQ2023}.
\end{remark}
\begin{remark}\label{R3.2}
In this paper, we consider the biharmonic operator $\Delta^2$  which is more complex than $-\Delta$, so we can not get that \eqref{eq3.1} has a nonnegative normalized solution. Precisely speaking, we use the fact that $\left\|\nabla|u|\|_2^2 \leq\right. \|\nabla u\|_2^2$ for any $u \in H^1(\mathbb{R}^d\times\mathbb{T}^n)$ in Remark \ref{R3.1}.  However, we do not know the size relationship between $\|\Delta u\|_2^2$ and $\|\Delta|u|\|_2^2$.
\end{remark}

In order to obtain a solution of \eqref{eq1.1}, we need to prove a uniform estimate for the solutions of \eqref{eq3.1} established in Lemma \ref{L3.2}.

\begin{lemma}\label{L3.3}
If $(\lambda_{r, s}, u_{r, s}) \in \mathbb{R} \times S_{r, \Theta}$ is a solution of \eqref{eq3.1} established in Lemma \ref{L3.2} for some $r$ and $s$, then
$$
\int_{\Omega_r\times\mathbb{T}^n}|\Delta u|^2 d xdy \leq \frac{4(d+n)}{(d+n)(q-2)-4}\left[\frac{q-2}{2} h(T_\Theta)+\Theta\left(\frac{1}{2 (d+n)}\|\widetilde{V}\|_{\infty}+\frac{q-2}{4}\|V\|_{\infty}\right)\right],
$$
where the constant $h(T_\Theta)$ is defined in (iii) of Lemma \ref{L3.1}  and is independent of $r$ and $s$.
\end{lemma}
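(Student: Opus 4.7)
The plan is to combine three identities satisfied by the critical point $u = u_{r,s}$ — energy, Nehari, and Pohozaev — and then invoke $m_{r,s}(\Theta)\le h(T_\Theta)$ from Lemma~\ref{L3.1}(iii). Write $N = d+n$. From $I_{r,s}(u) = m_{r,s}(\Theta)$ one has the energy identity $\tfrac{1}{2}\|\Delta u\|_2^2 + \tfrac{1}{2}\int Vu^2 - \tfrac{s}{q}\|u\|_q^q - \tfrac{\mu}{p}\|u\|_p^p = m_{r,s}(\Theta)$, while testing the equation against $u$ yields the Nehari identity $\|\Delta u\|_2^2 + \int Vu^2 + \lambda_{r,s}\Theta = s\|u\|_q^q+\mu\|u\|_p^p$. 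These two identities relate $\|\Delta u\|_2^2$ to $\lambda_{r,s}\Theta$ and the nonlinear quantities, but one further identity is needed to eliminate $\lambda_{r,s}$.

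The third ingredient is a Pohozaev-type identity obtained by multiplying the equation by the $L^2$-preserving dilation generator $\tfrac{N}{2}u + z\cdot\nabla u$ with $z = (x,y)$, and integrating over $\Omega_r\times\mathbb{T}^n$. This particular multiplier is chosen so that the $\lambda_{r,s}$-contribution cancels automatically (since dilation preserves the constraint). After integration by parts — using $u = \nabla u = 0$ on $\partial\Omega_r\times\mathbb{T}^n$, that all $y$-derivative contributions assemble into exact divergences on $\mathbb{T}^n$ that vanish by periodicity, and that the remaining boundary integral on $\partial\Omega_r\times\mathbb{T}^n$ has a non-negative sign by convexity of $\Omega_r$ and $0\in\Omega_r$ — one arrives at a Pohozaev inequality of the form $2\|\Delta u\|_2^2 - \tfrac{1}{2}\int\widetilde V u^2 \ge \tfrac{sN(q-2)}{2q}\|u\|_q^q + \tfrac{\mu N(p-2)}{2p}\|u\|_p^p$.

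Combining this Pohozaev inequality with the energy identity — used to substitute the combination $\tfrac{s}{q}\|u\|_q^q + \tfrac{\mu}{p}\|u\|_p^p = \tfrac{1}{2}(\|\Delta u\|_2^2 + \int Vu^2) - m_{r,s}(\Theta)$ — produces on the right-hand side a residual term proportional to $\tfrac{\mu(p-q)}{p}\|u\|_p^p$, which is non-negative by $\mu\le 0$ and $p<q$ and can therefore be discarded. After rearranging and applying the pointwise bounds $|V|\le \|V\|_\infty$ and $|\widetilde V|\le \|\widetilde V\|_\infty$, an algebraic manipulation isolates $\|\Delta u\|_2^2$ on the left and produces the desired coefficient $\tfrac{(d+n)(q-2)-4}{4(d+n)}$. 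Inserting the uniform bound $m_{r,s}(\Theta)\le h(T_\Theta)$ from Lemma~\ref{L3.1}(iii), with $h(T_\Theta)$ independent of $r$ and $s$, then completes the estimate.

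\textbf{Main obstacle.} The delicate point is the rigorous derivation of the Pohozaev identity on the waveguide $\Omega_r\times\mathbb{T}^n$, since the position vector $z = (x,y)$ is not globally well-defined on the torus factor. The resolution is that in the integration-by-parts computation all $y$-derivative terms collect into exact divergences on $\mathbb{T}^n$ and drop out by periodicity; only the $x$-component of $z$ generates a boundary contribution on $\partial\Omega_r\times\mathbb{T}^n$, which is non-negative by convexity and can be discarded to yield the inequality. Tracking the precise coefficients so as to recover the $N = d+n$ dependence — matching the total dimension appearing in the Gagliardo--Nirenberg inequality of Lemma~\ref{L2.1}, rather than a purely $d$-dependent bound that would arise from a naive partial-$x$ Pohozaev — is the computational subtlety that drives the estimate.
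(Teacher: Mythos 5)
Your proposal follows essentially the same route as the paper: test the equation with $u$ and with the dilation multiplier to get the Pohozaev identity on $\Omega_r\times\mathbb{T}^n$, discard the boundary term by convexity of $\Omega_r$, use $\mu\le 0$ and $p<q$ to absorb the $p$-term, substitute the energy level $I_{r,s}(u)=m_{r,s}(\Theta)$, bound $V,\widetilde V$ by their sup norms, and finish with $m_{r,s}(\Theta)\le h(T_\Theta)$ from Lemma \ref{L3.1}(iii). The only remark is that this computation actually yields the coefficient $\frac{(d+n)(q-2)-8}{4(d+n)}$ in front of $\|\Delta u\|_2^2$ (as in the paper's own penultimate display), not $\frac{(d+n)(q-2)-4}{4(d+n)}$ as in the lemma's statement, so you are reproducing a constant that the derivation does not quite deliver — a discrepancy already present in the paper itself.
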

\begin{proof} For simplicity, we denote $(\lambda_{r, s}, u_{r, s})$ as $(\lambda, u)$ in this lemma. Since $u$ is a solution of \eqref{eq3.1}, we have
\begin{equation}\label{eq3.16}
  \int_{\Omega_r\times\mathbb{T}^n}(|\Delta u|^2+ V u^2) d xdy=s \int_{\Omega_r\times\mathbb{T}^n}|u|^q d xdy+\mu \int_{\Omega_r\times\mathbb{T}^n}|u|^p d xdy-\lambda \int_{\Omega_r\times\mathbb{T}^n}|u|^2 d xdy .
\end{equation}
The Pohozaev identity implies
\begin{eqnarray*}
&&\frac{d+n-4}{2(d+n)} \int_{\Omega_r\times\mathbb{T}^n}|\Delta u|^2 d xdy+\frac{1}{2(d+n)} \int_{\partial (\Omega_r\times\mathbb{T}^n)}|\Delta u|^2((x,y) \cdot \mathbf{n}) d \sigma \\
&&+\frac{1}{2(d+n)} \int_{\Omega_r\times\mathbb{T}^n}\widetilde{V}(x,y) u^2dxdy+\frac{1}{2} \int_{\Omega_r\times\mathbb{T}^n} V u^2 d xdy\\
&=&-\frac{\lambda}{2} \int_{\Omega_r\times\mathbb{T}^n}|u|^2 d xdy+\frac{s}{q} \int_{\Omega_r\times\mathbb{T}^n}|u|^q d xdy+\frac{\mu}{p} \int_{\Omega_r\times\mathbb{T}^n}|u|^p d xdy,
\end{eqnarray*}
where $\mathbf{n}$ denotes the outward unit normal vector on $\partial(\Omega_r\times\mathbb{T}^n)$. It then follows from $\mu \leq 0$ that
\begin{eqnarray*}
&&\frac{2}{d+n} \int_{\Omega_r\times\mathbb{T}^n}|\Delta u|^2 d  xdy-\frac{1}{2(d+n)} \int_{\partial (\Omega_r\times\mathbb{T}^n)}|\nabla u|^2((x,y) \cdot \mathbf{n}) d \sigma\\
&&-\frac{1}{2(d+n)} \int_{\Omega_r\times\mathbb{T}^n}(\nabla V \cdot (x,y)) u^2 d xdy \\
& =&\frac{(q-2) s}{2 q} \int_{\Omega_r\times\mathbb{T}^n}|u|^q d xdy+ \mu\int_{\Omega_r\times\mathbb{T}^n}(\frac{1}{2}-\frac{1}{p})|u|^pd xdy \\
& \geq& \frac{(q-2) s}{2 q} \int_{\Omega_r\times\mathbb{T}^n}|u|^q d xdy+  \frac{\mu (q-2) }{2p  }  \int_{\Omega_r}|u|^p d xdy \\
& =&\frac{q-2}{2}\left(\frac{1}{2} \int_{\Omega_r\times\mathbb{T}^n}|\Delta u|^2 d xdy+\frac{1}{2} \int_{\Omega_r\times\mathbb{T}^n} V u^2 d xdy-m_{r, s}(\Theta)\right) .
\end{eqnarray*}
Consequently, we have
 \begin{eqnarray*}
&&\frac{q-2}{2} m_{r, s}(\Theta) \\
&\geq & \frac{q-2}{2}\left(\frac{1}{2} \int_{\Omega_r\times\mathbb{T}^n}|\Delta u|^2 d xdy+\frac{1}{2} \int_{\Omega_r} V u^2 d xdy\right)-\frac{2}{d+n} \int_{\Omega_r\times\mathbb{T}^n}|\Delta u|^2 d xdy \\
&& +\frac{1}{2(d+n)} \int_{\partial (\Omega_r\times\mathbb{T}^n)}|\nabla u|^2((x,y) \cdot \mathbf{n}) d \sigma+\frac{1}{2(d+n)} \int_{\Omega_r\times\mathbb{T}^n}(\nabla V \cdot (x,y)) u^2 d xdy \\
&\geq & \frac{(d+n)(q-2)-8}{4(d+n)} \int_{\Omega_r\times\mathbb{T}^n}|\Delta u|^2 d xdy-\Theta\left(\frac{1}{2(d+n)}\|\nabla V \cdot (x,y)\|_{\infty}+\frac{q-2}{4}\|V\|_{\infty}\right),
 \end{eqnarray*}
where the last inequality holds since $(x,y) \cdot \mathbf{n}  \geq 0$ for any $(x,y) \in \partial (\Omega_r\times\mathbb{T}^n)$ due to the convexity of $\Omega_r$. Using Lemma \ref{L3.1}, we have
\begin{eqnarray*}
 &&\frac{(d+n)(q-2)-8}{4(d+n)} \int_{\Omega_r\times\mathbb{T}^n}|\Delta u|^2 d xdy-\Theta\left(\frac{1}{2(d+n)}\|\nabla V \cdot x\|_{\infty}+\frac{q-2}{4}\|V\|_{\infty}\right)\\
 &\leq&\frac{q-2}{2}h(T_\Theta),
\end{eqnarray*}
which implies
\begin{equation*}
  \int_{\Omega_r\times\mathbb{T}^n}|\Delta u|^2 d xdy \leq \frac{4(d+n)}{(d+n)(q-2)-4}\left[\frac{q-2}{2} h(T_\Theta)+\Theta\left(\frac{1}{2 (d+n)}\|\widetilde{V}\|_{\infty}+\frac{q-2}{4}\|V\|_{\infty}\right)\right].
\end{equation*}
This completes the proof of lemma.
\end{proof}

Now, we obtain a solution of \eqref{eq1.1} by letting $s \rightarrow 1$.

\begin{lemma}\label{L3.4}
For every $\Theta>0$, problem \eqref{eq1.1} has a solution $\left(\lambda_r, u_r\right)$ provided $r>r_\Theta$ where $r_\Theta$ is as in Lemma \ref{L3.1}.
\end{lemma}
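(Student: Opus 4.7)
The plan is to take a sequence $s_k \to 1^-$ with $s_k \in [1/2,1]$ along which Lemma \ref{L3.2} yields solutions $(\lambda_{r,s_k}, u_{r,s_k}) \in \mathbb{R} \times S_{r,\Theta}$ of \eqref{eq3.2} with $I_{r,s_k}(u_{r,s_k}) = m_{r,s_k}(\Theta)$, and then to pass to the limit $s_k \to 1$ to recover a solution of \eqref{eq1.1}. The key inputs are the uniform-in-$s$ $H^2$ bound from Lemma \ref{L3.3} and the compactness of the Sobolev embedding on the bounded waveguide piece $\Omega_r\times\mathbb{T}^n$.

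\textbf{Step 1 (uniform bounds).} Lemma \ref{L3.3} provides a constant depending only on $h(T_\Theta)$, $\Theta$, $\|V\|_\infty$ and $\|\widetilde V\|_\infty$ (hence independent of $s_k$ and $r$) that bounds $\|\Delta u_{r,s_k}\|_2^2$, so $\{u_{r,s_k}\}$ is bounded in $H_0^2(\Omega_r\times\mathbb{T}^n)$. Testing \eqref{eq3.2} against $u_{r,s_k}$ yields
$$
\lambda_{r,s_k}\Theta=-\!\!\int_{\Omega_r\times\mathbb{T}^n}\!\!\bigl(|\Delta u_{r,s_k}|^2+Vu_{r,s_k}^2\bigr)dxdy+\mu\!\!\int_{\Omega_r\times\mathbb{T}^n}\!\!|u_{r,s_k}|^p dxdy+s_k\!\!\int_{\Omega_r\times\mathbb{T}^n}\!\!|u_{r,s_k}|^q dxdy.
$$
Using the $H^2$ bound together with the Gagliardo--Nirenberg inequality of Lemma \ref{L2.1} to control the $L^p$ and $L^q$ terms, we conclude that $\{\lambda_{r,s_k}\}$ is also bounded.

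\textbf{Step 2 (passage to the limit).} Up to a subsequence,
$$
\lambda_{r,s_k}\to \lambda_r\in\mathbb{R},\qquad u_{r,s_k}\rightharpoonup u_r\ \text{in }H_0^2(\Omega_r\times\mathbb{T}^n).
$$
Since $\Omega_r\times\mathbb{T}^n$ is bounded (with $\mathbb{T}^n$ compact), the Rellich--Kondrachov embedding $H_0^2(\Omega_r\times\mathbb{T}^n)\hookrightarrow L^t(\Omega_r\times\mathbb{T}^n)$ is compact for every $2\le t<4^*$, hence $u_{r,s_k}\to u_r$ strongly in $L^t$ for all such $t$. In particular $\|u_r\|_2^2=\Theta$, so $u_r\in S_{r,\Theta}$ is nontrivial. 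Passing to the distributional limit in \eqref{eq3.2}: weak $H^2$ convergence handles the $\Delta^2$ term; the potential term passes to the limit because $V$ is bounded and $u_{r,s_k}\to u_r$ in $L^2$; the nonlinearities pass because $s_k\to 1$ and $u_{r,s_k}\to u_r$ strongly in $L^p$ and $L^q$; and the $\lambda u$ term passes by strong $L^2$ convergence combined with $\lambda_{r,s_k}\to\lambda_r$. We obtain
$$
\Delta^2 u_r+Vu_r+\lambda_r u_r=\mu|u_r|^{p-2}u_r+|u_r|^{q-2}u_r\quad\text{in }\Omega_r\times\mathbb{T}^n,
$$
with $u_r\in H_0^2(\Omega_r\times\mathbb{T}^n)$ and $\|u_r\|_2^2=\Theta$, which is precisely \eqref{eq1.1}.

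\textbf{Anticipated obstacle.} The main delicate point is the $s$-independence of the $H^2$ bound in Step 1: without it, the weak limit along $s_k\to 1$ could fail to exist or could lose mass. This is exactly the content of Lemma \ref{L3.3}, which in turn relies on the Pohozaev identity together with the uniform upper estimate $m_{r,s}(\Theta)\le h(T_\Theta)$ from Lemma \ref{L3.1}(iii), where $T_\Theta$ is independent of $r$ and $s$. A secondary point worth noting is that strong $H^2$ convergence of $u_{r,s_k}$ to $u_r$ is not needed, since the biharmonic term enters \eqref{eq3.2} linearly and weak convergence suffices in the weak formulation; this spares us from arguing as in Lemma \ref{L3.2}, where strong $H^2$ convergence was obtained by testing against $u_n$ and using that $\lambda_n$ and the lower-order terms pass to the limit.
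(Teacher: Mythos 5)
Your proposal is correct and follows essentially the same route as the paper: invoke Lemma \ref{L3.2} to get solutions for a sequence $s_k\to 1$, use the $s$- and $r$-independent bound of Lemma \ref{L3.3} to extract a convergent subsequence, and pass to the limit using the compact embedding on the bounded domain $\Omega_r\times\mathbb{T}^n$ to preserve the mass constraint and the equation. The only (harmless) difference is that the paper asserts strong $H_0^2$ convergence "by an argument similar to Lemma \ref{L3.2}," whereas you correctly observe that weak $H^2$ convergence already suffices to pass to the limit in the weak formulation.
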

\begin{proof}
By using Lemma  \ref{L3.2}, there is a nontrivial solution $(\lambda_{r, s}, u_{r, s})$ to \eqref{eq3.1} for almost every $s \in\left[\frac{1}{2}, 1\right]$. In view of Lemma  \ref{L3.3}, $\left\{u_{r, s}\right\}$ is bounded. By an argument similar to that in Lemma  \ref{L3.2}, there exist $u_r \in S_{r, \Theta}$ and $\lambda_r$ such that, going if necessary to a subsequence,
$$
\lambda_{r, s} \rightarrow \lambda_r \quad \text { and } \quad u_{r, s} \rightarrow u_r \quad \text { in } H_0^2\left(\Omega_r\right) \quad \text { as } s \rightarrow 1 .
$$
Hence $u_r$ is a nontrivial solution of problem \eqref{eq1.1}.
\end{proof}

Next, we will consider the Lagrange multiplier. we first establish an a priori estimate for the solutions of \eqref{eq1.1}.

\begin{lemma}\label{L3.5}
If $\left\{\left(\lambda_r, u_r\right)\right\}$ is a family of nontrivial solutions of \eqref{eq1.1} such that $\left\|u_r\right\|_{H^2} \leq$ $C$ with $C>0$ independent of $r$, then $\limsup\limits_{r \rightarrow \infty}\left\|u_r\right\|_{\infty}<\infty$.
\end{lemma}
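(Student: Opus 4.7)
The plan is to bootstrap the uniform $H^2$-bound on $u_r$ into a uniform $L^\infty$-bound via a Brezis--Kato/Moser iteration followed by elliptic regularity.

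First I would show that $\{\lambda_r\}$ is uniformly bounded. Testing \eqref{eq1.1} with $u_r$ itself gives
$$\lambda_r\Theta=-\int_{\Omega_r\times\mathbb{T}^n}\bigl(|\Delta u_r|^2+Vu_r^2\bigr)\,dxdy+\mu\|u_r\|_p^p+\|u_r\|_q^q,$$
and since $p,q<4^*$, the Sobolev embedding $H^2(\mathbb{R}^d\times\mathbb{T}^n)\hookrightarrow L^{4^*}$ combined with $V\in L^{(d+n)/4}$ from $(V_0)$ renders the right-hand side bounded independently of $r$.

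Next I would rewrite \eqref{eq1.1} as $\Delta^2 u_r=a_r u_r$ with $a_r=-V-\lambda_r+\mu|u_r|^{p-2}+|u_r|^{q-2}$. The condition $q<4^*$ is precisely the inequality $(q-2)\cdot\frac{d+n}{4}<4^*$; together with $q>2+\frac{8}{d+n}$ this places the exponent $(q-2)(d+n)/4$ inside $[2,4^*]$, where the Sobolev embedding of $H^2$ on $\mathbb{R}^d\times\mathbb{T}^n$ is available with a constant independent of $r$. Hence $\||u_r|^{q-2}\|_{(d+n)/4}$, and similarly $\||u_r|^{p-2}\|_{(d+n)/4}$ by interpolation with the $L^2$-mass bound, are uniformly controlled, so $\|a_r\|_{(d+n)/4}\leq K$ uniformly in $r$. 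The standard biharmonic Brezis--Kato iteration -- testing against $u_r\min(|u_r|^{2\beta},M)$, using $H^2\hookrightarrow L^{4^*}$ to absorb the portion of $a_r$ above a high threshold, then sending $M\to\infty$ and iterating in $\beta$ -- produces uniform bounds $\|u_r\|_s\leq C_s$ for every finite $s$.

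Finally, with $a_r u_r\in L^s$ for arbitrary $s$, Calder\'on--Zygmund theory for the biharmonic Dirichlet problem on $\Omega_r\times\mathbb{T}^n$ gives $u_r\in W^{4,s}$, and Sobolev embedding yields $u_r\in C^{0,\alpha}$ with $\|u_r\|_\infty$ uniformly bounded. I expect the main technical obstacle to be maintaining uniform-in-$r$ constants at every step. This is controlled by extending $u_r$ by zero (using $u_r\in H_0^2$) to apply the global Sobolev constant on $\mathbb{R}^d\times\mathbb{T}^n$ in the Brezis--Kato step, and by observing for the boundary regularity that $\partial\Omega_r=r\,\partial\Omega$ has second fundamental form of order $1/r$, so the boundary becomes asymptotically flat and the elliptic $L^s$-constants are in fact asymptotically improving rather than deteriorating with $r$.
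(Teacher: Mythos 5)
Your overall strategy (uniform $L^{s}$ bounds for all $s$, then $W^{4,s}$ regularity, then $L^\infty$) is a legitimate alternative to what the paper does, but the central step as you describe it has a genuine gap: the truncated-power test function $u_r\min(|u_r|^{2\beta},M)$ is a second-order device that does not transfer to $\Delta^2$. For $-\Delta$ one uses $\nabla u\cdot\nabla\bigl(u|u|^{2\beta}\bigr)=(2\beta+1)|u|^{2\beta}|\nabla u|^2\geq c\,|\nabla(|u|^{\beta}u)|^2$; the analogous quantity $\int\Delta u_r\,\Delta\bigl(u_r\min(|u_r|^{2\beta},M)\bigr)$ contains cross terms of the form $\int|u_r|^{2\beta-2}u_r\,|\nabla u_r|^2\,\Delta u_r$ with no sign, and it does not dominate $\|\,|u_r|^{\beta}u_r\|_{H^2}^2$ or any Sobolev norm of a power of $u_r$. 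This is exactly the obstruction the authors themselves flag in Remark \ref{R3.2} (they cannot even compare $\|\Delta u\|_2$ with $\|\Delta|u|\|_2$), so the ``standard biharmonic Brezis--Kato iteration'' you invoke is not standard and would need a genuinely different mechanism (e.g.\ the duality/fixed-point form of Brezis--Kato using $(\Delta^2)^{-1}$ and the smallness of the high part of $a_r$ in $L^{(d+n)/4}$, or Green-function estimates). A secondary slip: since $(p-2)\frac{d+n}{4}<2$, the norm $\||u_r|^{p-2}\|_{(d+n)/4}$ is an $L^t$ norm of $u_r$ with $t<2$, which is \emph{not} controlled by interpolating between the mass and $L^{4^*}$ on the growing domains $\Omega_r\times\mathbb{T}^n$; this term should instead be absorbed pointwise via $|u|^{p-2}\leq 1+|u|^{q-2}$.

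The plan is repairable without any test functions: because $q<4^*$ strictly, the naive bootstrap $u_r\in L^{s_k}\Rightarrow \Delta^2u_r\in L^{s_k/(q-1)}\Rightarrow u_r\in W^{4,s_k/(q-1)}\hookrightarrow L^{s_{k+1}}$ has its fixed point at $s^*=\frac{(q-2)(d+n)}{4}<4^*$, so starting from $s_0=4^*$ the exponents escape to $+\infty$ in finitely many steps and one lands in $W^{4,s}$ with $s>\frac{d+n}{4}$, hence in $L^\infty$ --- provided one justifies, as you only sketch, that the Agmon--Douglis--Nirenberg constants for the Dirichlet bilaplacian on $\Omega_r\times\mathbb{T}^n$ are uniform in $r$ (plausible since $\partial\Omega_r$ flattens, but it needs to be said carefully). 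Note that the paper avoids all of this: its proof of Lemma \ref{L3.5} is a blow-up argument --- assume $M_r=\max u_r\to\infty$, rescale by $\tau_r=M_r^{(2-q)/4}$ around the maximum point, pass to a limiting nontrivial solution of $\Delta^2v=|v|^{q-2}v$ on $\mathbb{R}^d\times\mathbb{T}^n$ or a half-space, and contradict a Liouville theorem. That route trades uniform elliptic estimates on varying domains for a Liouville theorem on the limit domain; yours trades the Liouville input for quantitative regularity. Either can work, but as written your key iteration step fails for the fourth-order operator.
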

\begin{proof}
Using the regularity theory of elliptic partial differential equations, we know that $u_r \in C(\Omega_r\times\mathbb{T}^n)$. Assume to the contrary that there exist a sequence, for simplicity denoted by $\left\{u_r\right\}$, and $(x_r,y_r) \in \Omega_r\times\mathbb{T}^n$ such that
$$
M_r:=\max _{(x,y) \in \Omega_r\times\mathbb{T}^n} u_r(x,y)=u_r\left(x_r,y_r\right) \rightarrow \infty \quad \text { as } r \rightarrow \infty.
$$
Suppose without loss of generality that, up to a subsequence, $\lim\limits_{r \rightarrow \infty} \frac{x_r}{\left|x_r\right|}=\frac{y_r}{\left|y_r\right|}=(1,0, \ldots, 0)$. Set
$$
v_r(x)=\frac{u_r\left(x_r+\tau_r x,y_r+\tau_r y\right)}{M_r}
$$
for $(x,y) \in \Sigma^r:=\left\{(x,y) \in \mathbb{R}^d\times\mathbb{T}^n: (x_r+\tau_r x,y_r+\tau_r y)\in \Omega_r\times\mathbb{T}^n\right\}$, where $\tau_r=M_r^{\frac{2-q}{4}}$. Then $\tau_r \rightarrow 0$ as $r \rightarrow \infty$, $\left\|v_r\right\|_{L^{\infty}\left(\Sigma^r\right)} \leq 1$, and $v_r$ satisfies
\begin{equation}\label{eq3.17}
\Delta^2 v_r+\tau_r^4 V\left(x_r+\tau_r x,y_r+\tau_r y\right) v_r+\tau_r^4 \lambda_r v_r=\left|v_r\right|^{q-2} v_r+\mu \tau_r^{\frac{4(q-p)}{q-2}}\left|v_r\right|^{p-2} v_r \quad \text { in } \Sigma^r .
\end{equation}
In fact, since $u_r$ is a nontrivial solution of \eqref{eq1.1}, we obtain
\begin{eqnarray*}
&&\Delta^2 u_r\left(x_r+\tau_r x,y_r+\tau_r y\right)+ V\left(x_r+\tau_r x,y_r+\tau_r y\right) u_r\left(x_r+\tau_r x,y_r+\tau_r y\right)\\
&&+ \lambda_r u_r\left(x_r+\tau_r x,y_r+\tau_r y\right)\\
&=&\left|u_r\left(x_r+\tau_r x,y_r+\tau_r y\right)\right|^{q-2} u_r\left(x_r+\tau_r x,y_r+\tau_r y\right)\\
&&+\mu\left|u_r\left(x_r+\tau_r x,y_r+\tau_r y\right)\right|^{p-2} u_r\left(x_r+\tau_r x,y_r+\tau_r y\right)
\end{eqnarray*}
in $\Omega_r$, then by a direct calculation and the definition of $v_r(x)$, $\tau_r$, we know that \eqref{eq3.17} holds.
In view of \eqref{eq1.1}, the Gagliardo-Nirenberg inequality and $\left\|u_r\right\|_{H^2} \leq C$ with $C$ independent of $r$, we infer that the sequence $\left\{\lambda_r\right\}$ is bounded. It then follows from the regularity theory of elliptic partial differential equations and the Arzela-Ascoli theorem that there exists $v$ such that, up to a subsequence
$$
v_r \rightarrow v \quad \text { in } H_0^2(\Sigma) \quad \text { and } \quad v_r \rightarrow v \quad \text { in } C_{l o c}^\beta(\Sigma) \text { for some } \beta \in(0,1),
$$
where $\Sigma:=\lim\limits_{r \rightarrow \infty} \Sigma^r$.

Similar to the proof of \cite[Lemma 2.7]{{TBAQ2023}}, we have
 \begin{equation*}
   \liminf _{r \rightarrow \infty} \frac{\operatorname{dist}\left((x_r,y_r), \partial (\Omega_r\times\mathbb{T}^n)\right)}{\tau_r}=\liminf _{r \rightarrow \infty} \frac{\left|z_r-x_r\right|}{\tau_r} \geq d_1>0,
 \end{equation*}
where $z_r \in \partial (\Omega_r\times\mathbb{T}^n)$ is such that $\operatorname{dist}\left(x_r, \partial(\Omega_r\times\mathbb{T}^n)\right)=\left|z_r-x_r\right|$ for any large $r$. As a result, by letting $r \rightarrow \infty$ in \eqref{eq3.17}, we obtain that $v \in H_0^2(\Sigma)$ is a nontrivial solution of
$$
\Delta^2 v=|v|^{q-2} v \quad \text { in } \Sigma,
$$
where
$$
\Sigma= \begin{cases}\mathbb{R}^d\times\mathbb{T}^n & \text { if } \liminf\limits_{r \rightarrow \infty} \frac{\operatorname{dist}\left((x_r,y_r), \partial (\Omega_r\times\mathbb{T}^n)\right)}{\tau_r}=\infty, \\ \left\{x \in \mathbb{R}^d: x_1>-d_1\right\}\times\mathbb{T}^n & \text { if } \liminf\limits_{r \rightarrow \infty} \frac{\operatorname{dist}\left((x_r,y_r), \partial(\Omega_r\times\mathbb{T}^n)\right)}{\tau_r}>0 .\end{cases}
$$
It then follows from the Liouville theorems (see \cite{MEPL1982}) that $v=0$ in $H_0^2(\Sigma)$, which contradicts $v(0)=\lim\limits_{r \rightarrow \infty} v_r(0)=1$.
\end{proof}

Clearly, the proof of Lemma \ref{L3.5} does not depend on $\mu$.

\begin{lemma}\label{L3.6}
Let $\left(\lambda_{r, \Theta}, u_{r, \Theta}\right)$ be the solution of \eqref{eq1.1} from Lemma \ref{L3.4}. If $\|\widetilde{V}_{+}\|_{\frac{d+n}{4}} < 2 S$, then there exists $\bar{\Theta}>0$ such that
$$
\liminf\limits_{r \rightarrow \infty} \lambda_{r, \Theta}>0 \quad \text { for } 0<\Theta<\bar{\Theta} .
$$
\end{lemma}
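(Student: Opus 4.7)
The plan is to extract a useful lower bound for $\lambda_{r,\Theta}\Theta$ by a judicious combination of three ingredients: the equation solved by $u_{r,\Theta}$, tested against $u_{r,\Theta}$ itself; the Pohozaev identity used in the proof of Lemma \ref{L3.3}; and the energy value $I_r(u_{r,\Theta})=m_r(\Theta)$. Substituting the Pohozaev-derived expression for $\frac{1}{q}\int|u|^q+\frac{\mu}{p}\int|u|^p$ into the energy relation, elementary algebra yields the clean identity
$$
\lambda_{r,\Theta}\Theta \;=\; \frac{4}{d+n}\|\Delta u_{r,\Theta}\|_2^2 \;-\; \frac{1}{d+n}\int_{\Omega_r\times\mathbb{T}^n}\widetilde V\, u_{r,\Theta}^2\,dxdy \;-\; 2m_r(\Theta) \;-\; \mathcal{R}_r,
$$
where $\mathcal{R}_r\geq 0$ collects the Pohozaev boundary contribution, nonnegative because $\Omega_r$ is convex (so that $(x,y)\cdot\mathbf{n}\geq 0$ on $\partial(\Omega_r\times\mathbb{T}^n)$) and in fact vanishing outright once the boundary piece is written in terms of $|\nabla u_{r,\Theta}|^2$, which $H_0^2$-regularity forces to equal zero on the boundary.

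H\"older's inequality together with the sharp biharmonic Sobolev embedding gives $\int\widetilde V u_{r,\Theta}^2 \leq \|\widetilde V_+\|_{\frac{d+n}{4}}\,S^{-1}\|\Delta u_{r,\Theta}\|_2^2$. Since $\|\widetilde V_+\|_{\frac{d+n}{4}}<2S$ by hypothesis, the coefficient $c_0:=(4-\|\widetilde V_+\|_{\frac{d+n}{4}} S^{-1})/(d+n)$ is strictly positive, and discarding the nonnegative $\mathcal{R}_r$ produces
$$
\lambda_{r,\Theta}\,\Theta \;\geq\; c_0\,\|\Delta u_{r,\Theta}\|_2^2 \;-\; 2\,m_r(\Theta).
$$

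It then remains to show this right-hand side stays positive for small $\Theta$ uniformly in $r$. From Lemma \ref{L3.1}(iii) one has the $r$-free upper bound $m_r(\Theta)\leq h(T_\Theta)$, and a direct analysis of $\max_{t>0}h(t)$ shows that $h(T_\Theta)$ grows like $\Theta^{-\gamma}$ as $\Theta\to 0$, with $\gamma=(4q-(d+n)(q-2))/((d+n)(q-2)-8)>0$. For the complementary lower bound on $\|\Delta u_{r,\Theta}\|_2^2$, I would revisit the Pohozaev identity, use $\mu\leq 0$ to drop the nonpositive term $\frac{\mu(p-2)}{2p}\|u_{r,\Theta}\|_p^p$, discard the nonnegative boundary contribution, control $\int\widetilde V u_{r,\Theta}^2$ as above using $\|\widetilde V_+\|_{\frac{d+n}{4}}<2S$, and then invoke the Gagliardo-Nirenberg inequality (Lemma \ref{L2.1}) to bound $\|u_{r,\Theta}\|_q^q$ by a power of $\|\Delta u_{r,\Theta}\|_2^2$. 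Unwinding the resulting algebraic inequality yields $\|\Delta u_{r,\Theta}\|_2^2\geq \kappa(\Theta)$ of the same $\Theta^{-\gamma}$ order, independently of $r$.

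Since both bounds carry matching $\Theta$-scaling, the leading-order term of $c_0\kappa(\Theta)-2h(T_\Theta)$ is of the form $(c_0K_1-2K_2)\Theta^{-\gamma}$ for explicit constants $K_1,K_2>0$, and the strict inequality $\|\widetilde V_+\|_{\frac{d+n}{4}}<2S$ is precisely what provides the quantitative room in $c_0$ to choose a threshold $\bar\Theta>0$ such that this combination (together with the subleading corrections) remains positive for all $0<\Theta<\bar\Theta$; taking $\liminf_{r\to\infty}$ then delivers the conclusion. The principal technical hurdle will be the careful bookkeeping of the boundary integrals produced by Pohozaev (separating those identically zero due to $H_0^2$ from those merely nonnegative by convexity) and the quantitative verification that $c_0K_1>2K_2$ so that $\bar\Theta$ comes out strictly positive.
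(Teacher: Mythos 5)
There is a genuine gap, and it sits exactly at the step you defer to ``careful bookkeeping'': the Pohozaev boundary term. Your identity
\[
\lambda_{r,\Theta}\Theta=\frac{4}{d+n}\|\Delta u_{r,\Theta}\|_2^2-\frac{1}{d+n}\int_{\Omega_r\times\mathbb{T}^n}\widetilde V\,u_{r,\Theta}^2\,dxdy-2m_r(\Theta)-\mathcal{R}_r,\quad
\mathcal{R}_r=\frac{1}{d+n}\int_{\partial(\Omega_r\times\mathbb{T}^n)}|\Delta u_{r,\Theta}|^2\,((x,y)\cdot\mathbf{n})\,d\sigma\ \ge 0,
\]
is algebraically correct, but since $\mathcal{R}_r$ enters with a \emph{minus} sign, discarding it yields an \emph{upper} bound for $\lambda_{r,\Theta}\Theta$, not the lower bound you assert. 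Your fallback --- that the boundary piece vanishes because it can be rewritten in terms of $|\nabla u|^2$, which $H_0^2$-regularity kills on the boundary --- is false: the boundary integrand in the biharmonic Pohozaev identity is $|\Delta u|^2=|\partial_{\nu\nu}u|^2$, and this does not vanish for $u\in H_0^2$ (only $u$ and $\nabla u$ have zero trace; compare the second-order case, where $|\partial_\nu u|^2$ likewise survives for $H_0^1$ functions). The same obstruction defeats your proposed $r$-uniform lower bound $\|\Delta u_{r,\Theta}\|_2^2\ge\kappa(\Theta)$: subtracting the Pohozaev identity from half the Nehari identity on $\Omega_r\times\mathbb{T}^n$ leaves $\mathcal{R}_r$ on the wrong side of the resulting inequality, so no lower bound on $\|\Delta u_{r,\Theta}\|_2^2$ follows at finite $r$. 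This is precisely why the paper does not argue at finite $r$: it first passes to the limit $r\to\infty$, shows by a maximum-point/principal-eigenvalue comparison that the solutions cannot degenerate for small $\Theta$, extracts a (possibly translated) nontrivial weak limit solving a limit equation on all of $\mathbb{R}^d\times\mathbb{T}^n$ or on a half-space (the latter excluded by the Esteban--Lions Liouville theorem), and only then applies the boundary-free Pohozaev identity together with Gagliardo--Nirenberg and $\|\widetilde V_+\|_{\frac{d+n}{4}}<2S$ to obtain the lower bound \eqref{eq3.30} on $\|\Delta u_\Theta\|_2^2$ and conclude $\lambda_\Theta>0$.

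A secondary issue: even granting both of your bounds, the conclusion rests on the unverified quantitative comparison $c_0K_1>2K_2$ between the leading constants of two competing $\Theta^{-\gamma}$ terms, and nothing in the hypotheses guarantees this. The paper sidesteps any such comparison: it estimates $(\tfrac1q-\tfrac12)\lambda_\Theta\int u_\Theta^2$ from above by $\tfrac{(d+n-4)q-2(d+n)}{2q(d+n)}\|\Delta u_\Theta\|_2^2+O(\Theta)$, whose coefficient is negative because $q<4^*$; once $\|\Delta u_\Theta\|_2^2\to\infty$ as $\Theta\to0$ is known, the right-hand side tends to $-\infty$ and $\lambda_\Theta>0$ follows with no tuning of constants. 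You would need to either import the paper's limiting argument wholesale or find a genuinely new way to control the boundary term; as written, the proposal does not close.
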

\begin{proof}
Let $\left(\lambda_{r, \Theta}, u_{r, \Theta}\right)$ be the solution of \eqref{eq1.1} established in Theorem \ref{L3.4}. By the regularity theory of elliptic partial differential equations, we have $u_{r, \Theta} \in C\left(\Omega_r\times\mathbb{T}^n\right)$. Using Lemma \ref{L3.5}, it holds
$$
\limsup _{r \rightarrow \infty} \max\limits_{\Omega_r\times\mathbb{T}^n} u_{r, \Theta}<\infty .
$$
Setting
$$
Q(\Theta)=\liminf\limits_{r \rightarrow \infty} \max\limits_{\Omega_r\times\mathbb{T}^n} u_{r, \Theta},
$$
we claim that there is $\Theta_1>0$ such that $Q(\Theta)>0$ for any $0<\Theta<\Theta_1$. Assume to the contrary that there exists a sequence $\left\{\Theta_k\right\}$ tending to 0 as $k \rightarrow \infty$ such that $Q\left(\Theta_k\right)=0$ for any $k$, that is,
\begin{equation}\label{eq3.20}
  \liminf\limits_{r \rightarrow \infty} \max\limits_{\Omega_r\times\mathbb{T}^n} u_{r, \Theta_k}=0 \quad \text {for any}\ k.
\end{equation}
As a consequence of (iii) in Lemma \ref{L3.1}, for any $r>r_{\Theta_k}$, we have
\begin{equation}\label{eq3.21}
I_r\left(u_{r, \Theta_k}\right)=m_{r, 1}\left(\Theta_k\right) \rightarrow \infty \quad \text { as } k \rightarrow \infty .
\end{equation}
For any given $k$, it follows from \eqref{eq3.20} and $u_{r, \Theta_k} \in S_{r, \Theta_k}$ that, up to a subsequence,
\begin{equation}\label{eq3.22}
\int_{\Omega_r\times\mathbb{T}^n}\left|u_{r, \Theta_k}\right|^s d xdy=\int_{\Omega_r\times\mathbb{T}^n}\left|u_{r, \Theta_k}\right|^{s-2}\left|u_{r, \Theta_k}\right|^2 d xdy \leq\left|\max _{\Omega_r\times\mathbb{T}^n} u_{r, \Theta_k}\right|^{s-2} \Theta_k \rightarrow 0
\end{equation}
as $r\rightarrow \infty$ for any $s>2$. Hence, for any given large $k$, there exists $\bar{r}_k>r_{\Theta_k}$ such that
$$
\left|\frac{1}{q} \int_{\Omega_r\times\mathbb{T}^n} |u_{r, \Theta_k}|^q d xdy+\frac{\mu}{p} \int_{\Omega_r\times\mathbb{T}^n} |u_{r, \Theta_k}|^p d xdy \right|<\frac{m_{r, 1}\left(\Theta_k\right)}{2} \text { for any } r \geq \bar{r}_k .
$$
In view of \eqref{eq3.21} and $I_r\left(u_{r, \Theta_k}\right)=m_{r, 1}\left(\Theta_k\right)$, we further have
\begin{equation}\label{eq3.23}
\int_{\Omega_r\times\mathbb{T}^n}\left|\Delta u_{r, \Theta_k}\right|^2 d xdy+\int_{\Omega_r\times\mathbb{T}^n} V  u_{r, \Theta_k}^2 d xdy \geq \frac{m_{r, 1}\left(\Theta_k\right)}{2} \text { for any large } k \text { and } r \geq \bar{r}_k .
\end{equation}
It follows from \eqref{eq3.20}, \eqref{eq3.22} and \eqref{eq3.23} that there exists $r_k \geq \bar{r}_k$ with $r_k \rightarrow \infty$ as $k \rightarrow \infty$ such that
\begin{equation}\label{eq3.24}
   \lim _{k \rightarrow \infty} \max _{\Omega_{r_k}\times\mathbb{T}^n} u_{r_k, \Theta_k}=0,
\end{equation}
\begin{equation}\label{eq3.25}
   \int_{\Omega_{r_k}\times\mathbb{T}^n}\left|u_{r_k, \Theta_k}\right|^s d xdy \leq\left|\max _{\Omega_{r_k}\times\mathbb{T}^n} u_{r_k, \Theta_k}\right|^{s-2} \Theta_k \rightarrow 0 \text { as } k \rightarrow \infty \text { for any } s>2
\end{equation}
and
\begin{equation}\label{eq3.26}
\int_{\Omega_{r_k}\times\mathbb{T}^n}\left|\Delta u_{r_k, \Theta_k}\right|^2 d xdy+\int_{\Omega_{r_k}\times\mathbb{T}^n} V u_{r_k, \Theta_k}^2 d xdy \rightarrow \infty \quad \text { as } k \rightarrow \infty .
\end{equation}
By \eqref{eq1.1}, \eqref{eq3.25} and \eqref{eq3.26}, we have
\begin{equation}\label{eq3.27}
\lambda_{r_k,\Theta_k} \rightarrow-\infty \quad \text { as } k \rightarrow \infty .
\end{equation}
Now \eqref{eq1.1} implies
\begin{eqnarray*}
\Delta^2 u_{r_k, \Theta_k}+Vu_{r_k, \Theta_k}+\lambda_{r_k,\Theta_k} u_{r_k, \Theta_k}=|u_{r_k, \Theta_k}|^{q-2}u_{r_k, \Theta_k}+\mu|u_{r_k, \Theta_k}|^{p-2}u_{r_k, \Theta_k},
\end{eqnarray*}
so
$$
\Delta^2 u_{r_k, \Theta_k}+\left(\|V\|_{\infty}+\frac{\lambda_{r_k, \Theta_k}}{2}\right) u_{r_k, \Theta_k} \geq-\frac{\lambda_{r_k, \Theta_k}}{2}u_{r_k, \Theta_k}+\left|u_{r_k, \Theta_k}\right|^{q-2}u_{r_k, \Theta_k}+\mu|u_{r_k, \Theta_k}|^{p-2}u_{r_k, \Theta_k}.
$$
Using \eqref{eq3.27} and \eqref{eq3.24}, it follows that
$$
\Delta^2 u_{r_k, \Theta_k}+\left(\|V\|_{\infty}+\frac{\lambda_{r_k, \Theta_k}}{2}\right) u_{r_k, \Theta_k} \geq 0
$$
for large $k$. Let $\theta_{r_k}$ be the principal eigenvalue of $\Delta^2$ with Dirichlet boundary condition in $\Omega_{r_k}$, and $v_{r_k}>0$ be the corresponding normalized eigenfunction. It follows that
$$
\left(\theta_{r_k}+\|V\|_{\infty}+\frac{\lambda_{r_k, \Theta_k}}{2}\right) \int_{\Omega_{r_k}\times\mathbb{T}^n} u_{r_k, \Theta_k} v_{r_k} d xdy \geq 0.
$$
Since $\int_{\Omega_{r_k}\times\mathbb{T}^n} u_{r_k, \Theta_k} v_{r_k} d xdy>0$, we have
$$
\theta_{r_k}+\|V\|_{\infty}+\frac{\lambda_{r_k, \Theta_k}}{2} \geq 0,
$$
which contradicts \eqref{eq3.27} for large $k$. Hence the claim holds, that is, there exists $\Theta_1>0$ such that
\begin{equation}\label{eqq3.28}
  Q(\Theta)=\liminf\limits_{r \rightarrow \infty} \max\limits_{\Omega_r\times\mathbb{T}^n} u_{r, \Theta}>0
\end{equation}
for any $0<\Theta<\Theta_1$.

We consider $H^2(\Omega_r\times\mathbb{T}^n)$ as a subspace of $H^2(\mathbb{R}^d\times\mathbb{T}^n)$ for any $r>0$. It follows from Lemma \ref{L3.3} that the set of solutions $\left\{u_{r, \Theta}: r>r_\Theta\right\}$ established in Lemma \ref{L3.4} is bounded in $H^2(\mathbb{R}^d\times\mathbb{T}^n)$, so there exist $u_\Theta \in H^2(\mathbb{R}^d\times\mathbb{T}^n)$ and $\lambda_\Theta \in \mathbb{R}$ such that up to a subsequence:
\begin{eqnarray*}
  \lambda_{r, \Theta} &\rightarrow& \lambda_\Theta,\\
u_{r, \Theta} &\rightharpoonup& u_\Theta \ \text {in}\ H^2(\mathbb{R}^d\times\mathbb{T}^n),\\
u_{r, \Theta} &\rightarrow& u_\Theta \ \text{in}\ L_{l o c}^k(\mathbb{R}^d\times\mathbb{T}^n) \ \text{for all}\ 2 \leq k<2^*,\\
u_{r, \Theta} &\rightarrow& u_\Theta \ \text {a.e. in}\ \mathbb{R}^d\times\mathbb{T}^n
\end{eqnarray*}
and $u_\Theta$ is a solution of the equation
\begin{equation*}
  \Delta^2 u+V(x,y)u+\lambda_\Theta u=|u|^{q-2} u+\mu |u|^{p-2} u \ \text {in}\ \mathbb{R}^d\times\mathbb{T}^n.
\end{equation*}
Hence,
\begin{eqnarray}\label{eq3.28}
&&\int_{\mathbb{R}^d\times\mathbb{T}^n}\left|\Delta u_\Theta\right|^2 d xdy+\int_{\mathbb{R}^d\times\mathbb{T}^n} V(x,y) u_\Theta^2 d xdy+\lambda_\Theta \int_{\mathbb{R}^d\times\mathbb{T}^n} u_\Theta^2 d xdy\nonumber\\
&=&\int_{\mathbb{R}^d\times\mathbb{T}^n}\left|u_\Theta\right|^q d xdy+\mu \int_{\mathbb{R}^d\times\mathbb{T}^n}|u_\Theta|^p d xdy
\end{eqnarray}
and the Pohozaev identity gives
\begin{eqnarray}\label{eq3.29}
& &\frac{d+n-4}{2(d+n)} \int_{\mathbb{R}^d\times\mathbb{T}^n}\left|\Delta u_\Theta\right|^2 d xdy+\frac{1}{2(d+n)} \int_{\mathbb{R}^d\times\mathbb{T}^n} \widetilde{V} u_\Theta^2dxdy+\frac{1}{2} \int_{\mathbb{R}^d\times\mathbb{T}^n} V(x,y) u_\Theta^2 d xdy\nonumber\\
&&+\frac{\lambda_\Theta}{2} \int_{\mathbb{R}^d\times\mathbb{T}^n} u_\Theta^2 d xdy \nonumber\\
& =&\frac{1}{q} \int_{\mathbb{R}^d\times\mathbb{T}^n}\left|u_\Theta\right|^q d xdy+\frac{\mu}{p} \int_{\mathbb{R}^d\times\mathbb{T}^n}|u_\Theta|^p d xdy .
\end{eqnarray}
It follows from \eqref{eq3.28}, \eqref{eq3.29}, $(f_2)$, the Gagliardo-Nirenberg inequality and the fact $\mu\leq0$ that
\begin{eqnarray*}
  & &\frac{2}{d+n} \int_{\mathbb{R}^d\times\mathbb{T}^n}\left|\Delta u_\Theta\right|^2 d xdy+\frac{1}{2(d+n)} \int_{\mathbb{R}^d\times\mathbb{T}^n} \widetilde{V}(x,y) u_\Theta^2dxdy  \nonumber\\
  &=&\left(\frac{1}{2}-\frac{1}{q}\right) \int_{\mathbb{R}^d\times\mathbb{T}^n}\left|u_\Theta\right|^q d xdy+\mu \int_{\mathbb{R}^d\times\mathbb{T}^n}\left(\frac{1}{2}-\frac{1}{p}\right)|u_\Theta|^p d xdy  \nonumber\\
  &\leq& \frac{C_{d,n, q}(q-2)}{2 q}\left(\int_{\mathbb{R}^d\times\mathbb{T}^n} u_\Theta^2 d xdy\right)^{\frac{2 q-(d+n)(q-2)}{4}}\left(\int_{\mathbb{R}^d\times\mathbb{T}^n}\left|\Delta u_\Theta\right|^2 d x\right)^{\frac{(d+n)(q-2)}{4}}.
\end{eqnarray*}
By using the H\"older inequality, we have
\begin{eqnarray*}
&&\left(\frac{1}{d+n}-\frac{\|\widetilde{V}_{+}\|_{\frac{d+n}{4}} S^{-1}}{2(d+n)}\right) \int_{\mathbb{R}^d\times\mathbb{T}^n}\left|\Delta u_\Theta\right|^2 d xdy\\
&\leq&\frac{1}{ N} \int_{\mathbb{R}^d\times\mathbb{T}^n}\left|\Delta u_\Theta\right|^2 d xdy+\frac{1}{2 N} \int_{\mathbb{R}^d\times\mathbb{T}^n} \widetilde{V}(x,y) u_\Theta^2dxdy.
\end{eqnarray*}
Therefore,
\begin{eqnarray*}
   &&\left(\frac{1}{d+n}-\frac{\|\widetilde{V}_{+}\|_{\frac{d+n}{4}} S^{-1}}{2(d+n)}\right) \int_{\mathbb{R}^d\times\mathbb{T}^n}\left|\Delta u_\Theta\right|^2 d xdy\nonumber\\
   &\leq&\frac{C_{d,n, q}(q-2)}{2 q}\left(\int_{\mathbb{R}^d\times\mathbb{T}^n} u_\Theta^2 d xdy\right)^{\frac{2 q-(d+n)(q-2)}{4}}\left(\int_{\mathbb{R}^d\times\mathbb{T}^n}\left|\Delta u_\Theta\right|^2 d x\right)^{\frac{(d+n)(q-2)}{4}}.
\end{eqnarray*}
If $u_\Theta \neq 0$, Using $\|\widetilde{V}_{+}\|_{\frac{d+n}{4}} < 2 S$, we obtain that
\begin{equation}\label{eq3.30}
  \int_{\mathbb{R}^d\times\mathbb{T}^n}\left|\Delta u_\Theta\right|^2 d xdy \geq\left[\frac{q\left(2-\|\widetilde{V}_{+}\|_{\frac{d+n}{4}} S^{-1}\right)}{(d+n)C_{d,n, q}(q-2)}\right]^{\frac{4}{(d+n)(q-2)-4}} \Theta^{\frac{q(d+n-2)-2(d+n)}{(d+n)(q-2)-4}} .
\end{equation}
Next, it follows from \eqref{eq3.28}, \eqref{eq3.29}, \eqref{eq3.30}, $(f_2)$ and $2+\frac{4}{N}<q<4^*$ that
\begin{eqnarray*}
&& \left(\frac{1}{q}-\frac{1}{ 2}\right)\lambda_\Theta \int_{\mathbb{R}^d\times\mathbb{T}^n} u_\Theta^2 d xdy\\
&=& \left(\frac{d+n-4}{2(d+n)}-\frac{1}{q}\right)\int_{\mathbb{R}^d\times\mathbb{T}^n}\left|\Delta u_\Theta\right|^2 d xdy+\frac{1}{2(d+n)} \int_{\mathbb{R}^d\times\mathbb{T}^n} \widetilde{V}(x) u_\Theta^2 d xdy \\
&&+\left(\frac{1}{2  }-\frac{1}{q}\right) \int_{\mathbb{R}^d\times\mathbb{T}^n} V(x) u_\Theta^2 d xdy-\frac{(q-p)\mu}{pq} \int_{\mathbb{R}^d\times\mathbb{T}^n}|u_\Theta|^pd xdy \\
& \leq& \frac{(d+n-4) q-2(d+n)}{2q(d+n)} \int_{\mathbb{R}^d\times\mathbb{T}^n}\left|\Delta u_\Theta\right|^2 d xdy+\frac{\|\widetilde{V}\|_{\infty}}{2(d+n)} \Theta+\frac{(q-2)\|V\|_{\infty}}{2 q} \Theta \\
&&-\frac{\mu(q-p)}{q}C_{d,n,p}   \Theta^{ \frac{2 p-(d+n)(p-2)}{4}}\left(\int_{\mathbb{R}^d\times\mathbb{T}^n}\left|\Delta u_\Theta\right|^2 d xdy\right)^{\frac{(d+n)(p-2)}{4}} \\
& \rightarrow&-\infty \ \text { as } \Theta \rightarrow 0,
\end{eqnarray*}
since $\frac{(d+n-2) q-2(d+n)}{2q(d+n)}<0$. Therefore, if $u_\Theta \neq 0$ for $\Theta>0$ small there exists $\Theta_0>0$ such that $\lambda_\Theta>0$ for $0<\Theta<\Theta_0$.

In order to complete the proof, we consider the case that there is a sequence $\Theta_k \rightarrow 0$ such that $u_{\Theta_k}=0$ for any $k$. Assume without loss of generality that $u_\Theta=0$ for any $\Theta \in(0, \Theta_1)$. Let $(x_{r, \Theta},y_{r, \Theta}) \in \Omega_r\times\mathbb{T}^n$ be such that $u_{r, \Theta}\left(x_{r, \Theta},y_{r, \Theta}\right)=\max\limits_{\Omega_r\times\mathbb{T}^n} u_{r, \Theta}$. In view of \eqref{eqq3.28}, there holds $\left|x_{r, \Theta}\right| \rightarrow \infty$
as $r \rightarrow \infty$. Otherwise, there exists $(x_0,y_0) \in \Omega_r\times\mathbb{T}^n$ such that, up to a subsequence, $x_{r, \Theta} \rightarrow x_0$, $y_{r, \Theta} \rightarrow y_0$ and hence $u_\Theta(x_0,y_0) \geq d_\Theta>0$. This contradicts $u_\Theta=0$. We claim that $\operatorname{dist}((x_{r, \Theta},y_{r, \Theta}), \partial (\Omega_r\times\mathbb{T}^n)) \rightarrow \infty$ as $r \rightarrow \infty$. Arguing by contradiction we assume that $\liminf\limits_{r \rightarrow \infty} \operatorname{dist}((x_{r, \Theta},y_{r, \Theta}),  \partial (\Omega_r\times\mathbb{T}^n))=l<\infty$. It follows from \eqref{eqq3.28} that $l>0$. Let $w_r(x)=u_{r, \Theta}(x+x_{r, \Theta},y+y_{r, \Theta})$ for any $(x,y) \in \Sigma^r:=\{(x,y) \in \mathbb{R}^d\times\mathbb{T}^n: x+x_{r, \Theta} \in \Omega_r,y+y_{r, \Theta} \in \mathbb{T}^n\}$. Then $w_r$ is bounded in $H^2(\mathbb{R}^d\times\mathbb{T}^n)$, and there is $w \in H^2(\mathbb{R}^d\times\mathbb{T}^n)$ such that $w_r \rightharpoonup w$ as $r \rightarrow \infty$. By the regularity theory of elliptic partial equations and $\liminf\limits_{r \rightarrow \infty} u_{r, \Theta}\left(x_{r, \Theta},y_{r, \Theta}\right)>d_\Theta>0$, we infer that $w(0) \geq d_\Theta>0$. Assume without loss of the generality that, up to a subsequence,
$$
\lim\limits_{r \rightarrow \infty} \frac{x_{r, \Theta}}{\left|x_{r, \Theta}\right|}=e_1.
$$
Setting
$$
\Sigma=\left\{x \in \mathbb{R}^d\times\mathbb{T}^n: x \cdot e_1<l\right\}=\left\{x \in \mathbb{R}^d\times\mathbb{T}^n: x_1<l\right\},
$$
we have $\phi(x-x_{r, \Theta},y-y_{r, \Theta}) \in C_c^{\infty}(\Omega_r\times\mathbb{T}^n)$ for any $\phi \in C_c^{\infty}(\Sigma)$ and $r$ large enough. It then follows that
\begin{eqnarray}\label{eq3.32}
&&\int_{\Omega_r\times\mathbb{T}^n} \Delta u_{r, \Theta} \Delta \phi\left(x-x_{r, \Theta},y-y_{r, \Theta}\right) d xdy+\int_{\Omega_r\times\mathbb{T}^n} V u_{r, \Theta} \phi\left(x-x_{r, \Theta},y-y_{r, \Theta}\right) d xdy\nonumber\\
&&+\lambda_{r, \Theta} \int_{\Omega_r\times\mathbb{T}^n} u_{r, \Theta} \phi\left(x-x_{r, \Theta},y-y_{r, \Theta}\right) d xdy\nonumber\\
&=&\int_{\Omega_r\times\mathbb{T}^n}\left|u_{r, \Theta}\right|^{q-2} u_{r, \Theta} \phi\left(x-x_{r, \Theta},y-y_{r, \Theta}\right) d xdy\nonumber\\
&&+\mu \int_{\Omega_r\times\mathbb{T}^n}\left|u_{r, \Theta}\right|^{p-2} u_{r, \Theta} \phi\left(x-x_{r, \Theta},y-y_{r, \Theta}\right)dxdy .
\end{eqnarray}
Since $\left|x_{r, \Theta}\right| \rightarrow \infty$ as $r \rightarrow \infty$, it holds
\begin{eqnarray}\label{eq3.33}
\left|\int_{\Omega_r\times\mathbb{T}^n} V u_{r, \Theta} \phi\left(x-x_{r, \Theta}\right) d xdy\right| & \leq& \int_{\text {Supp } \phi\times\mathbb{T}^n}\left|V\left(x+x_{r, \Theta}\right) w_r \phi\right| d x dy \nonumber\\
& \leq&\left\|w_r\right\|_{4^*}\|\phi\|_{4^*}\left(\int_{\text {Supp } \phi\times\mathbb{T}^n}\left|V\left(\cdot+x_{r, \Theta}\right)\right|^{\frac{d+n}{4}} d xdy\right)^{\frac{4}{d+n}} \nonumber\\
& \leq&\left\|w_r\right\|_{4^*}\|\phi\|_{4^*}\left(\int_{\mathbb{R}^d \backslash B_{\frac{|x_{r, \Theta}|}{2}}\times\mathbb{T}^n}|V|^{\frac{d+n}{4}} d xdy\right)^{\frac{4}{d+n}}\nonumber\\
& \rightarrow& 0 \text { as } r \rightarrow \infty .
\end{eqnarray}
Letting $r \rightarrow \infty$ in \eqref{eq3.32}, we obtain for $\phi \in C_c^{\infty}(\Sigma)$:
$$
\int_{\Sigma} \nabla w \cdot \nabla \phi d xdy+\lambda_\Theta \int_{\Sigma} w \phi d xdy=\int_{\Sigma}|w|^{q-2} w \phi d xdy+\mu \int_{\Sigma}f(w) \phi d xdy .
$$
Thus $w \in H_0^2(\Sigma)$ is a weak solution of the equation
\begin{equation}\label{eq3.34}
 \Delta^2 w+\lambda_\Theta w=|w|^{q-2} w+\mu|w|^{p-2} w \quad \text { in } \Sigma .
\end{equation}
Hence we obtain a nontrivial nonnegative solution of \eqref{eq3.34} on a half space which is impossible by the Liouville theorem (see \cite{MEPL1982}). This proves that dist $\left((x_{r, \Theta},y_{r, \Theta}), \partial (\Omega_r\times\mathbb{T}^n)\right) \rightarrow \infty$ as $r \rightarrow \infty$. A similar argument as above shows that \eqref{eq3.34} holds for $\Sigma=\mathbb{R}^d\times\mathbb{T}^n$. Now we argue as in the case $u_\Theta \neq 0$ above that there exists $\Theta_2$ such that $\lambda_\Theta>0$ for any $0<\Theta<\Theta_2$.

Setting $\bar{\Theta}=\min \left\{\Theta_0, \Theta_1, \Theta_2\right\}$, the proof is complete.
\end{proof}
\noindent\textbf{Proof of Theorem \ref{t1.1}} The proof is an immediate consequence of Lemmas \ref{L3.4}, \ref{L3.5} and \ref{L3.6}.

\section{Proof of Theorem \ref{t1.2}}

In this section, we assume that the assumptions of Theorem \ref{t1.2} hold.
Since $\mu>0$,
\begin{eqnarray*}
&&I_r(u)\\
&\geq&\frac{1-\left\|V_{-}\right\|_{\frac{d+n}{4}} S^{-1}}{2} \int_{\Omega_r\times\mathbb{T}^n}|\Delta u|^2 d xdy-\frac{C_{d,n, q} \Theta^{\frac{4q-(d+n)(q-2)}{8}}}{q}\left(\int_{\Omega_r\times\mathbb{T}^n}|\Delta u|^2 d xdy\right)^{\frac{(d+n)(q-2)}{8}}\\
&&-\mu C_{d,n, p} \Theta^{\frac{4 p-(d+n)(p-2)}{8}}\left(\int_{\Omega_r\times\mathbb{T}^n}|\Delta u|^2 d xdy\right)^{\frac{(d+n)(p-2)}{8}}\\
&=&h_1(t),
\end{eqnarray*}
where
\begin{eqnarray*}
h_1(t)&:=&\frac{1}{2}\left(1-\left\|V_{-}\right\|_{\frac{d+n}{4}} S^{-1}\right) t^4-\frac{C_{d,n, q} \Theta^{\frac{4 q-(d+n)(q-2)}{8}}}{q}t^{\frac{(d+n)(q-2)}{2}}\\
&&- \mu C_{d,n, p} \Theta^{\frac{4 p-(d+n)(p-2)}{8}}t^{\frac{(d+n)(p-2)}{2}}\\
&=&t^{\frac{(d+n)(p-2)}{2}}\left[\frac{1}{2}\left(1-\left\|V_{-}\right\|_{\frac{d+n}{4}} S^{-1}\right) t^{\frac{8-(d+n)(p-2)}{2}}-\frac{C_{d,n, q} \Theta^{\frac{4 q-(d+n)(q-2)}{8}}}{q}t^{\frac{(d+n)(q-p)}{2}}\right]\\
&&-\mu C_{d,n, p} \Theta^{\frac{4 p-(d+n)(p-2)}{8}}t^{\frac{(d+n)(p-2)}{2}}.
\end{eqnarray*}
Consider
$$
\psi(t):=\frac{1}{2}\left(1-\left\|V_{-}\right\|_{\frac{d+n}{4}} S^{-1}\right) t^{\frac{8-(d+n)(p-2)}{2}}-\frac{C_{d,n, q} \Theta^{\frac{4 q-(d+n)(q-2)}{8}}}{q}t^{\frac{(d+n)(q-p)}{2}}.
$$
Note that $\psi$ admits a unique maximum at
$$
\bar{t}=\left[\frac{q(8-(d+n)(p-2))\left(1-\|V_{-}\|_{\frac{d+n}{4}} S^{-1}\right)}{ 2(d+n)(q-p) C_{d,n, q}}\right]^{\frac{2}{(d+n)(q-2)-8}} \Theta^{\frac{(d+n)(q-2)-4 q}{4((d+n)(q-2)-8)}} .
$$
By a direct calculation, we obtain
\begin{eqnarray*}
&&\psi(\bar{t})\\
&=&\frac{1}{2}\left(1-\left\|V_{-}\right\|_{\frac{d+n}{4}} S^{-1}\right) \bar{t}^{\frac{8-(d+n)(p-2)}{2}}-\frac{C_{d,n, q} \Theta^{\frac{4 q-(d+n)(q-2)}{8}}}{q}\bar{t}^{\frac{(d+n)(q-p)}{2}}\\
&=&\frac{1}{2}\left(1-\left\|V_{-}\right\|_{\frac{d+n}{4}} S^{-1}\right)^{\frac{(d+n)(q-p)}{(d+n)(q-2)-8}}\left[\frac{q(8-(d+n)(p-2))}{ 2(d+n)(q-p) C_{d,n, q}}\right]^{\frac{8-(d+n)(p-2)}{(d+n)(q-2)-8}} \Theta^{\frac{[(d+n)(q-2)-4 q][8-(d+n)(p-2)]}{8((d+n)(q-2)-8)}}\\
&&-\frac{C_{d,n, q} \Theta^{\frac{4 q-(d+n)(q-2)}{8}}}{q}\left[\frac{q(8-(d+n)(p-2))\left(1-\|V_{-}\|_{\frac{d+n}{4}} S^{-1}\right)}{ 2(d+n)(q-p) C_{d,n, q}}\right]^{\frac{(d+n)(q-p)}{(d+n)(q-2)-8}} \Theta^{\frac{[(d+n)(q-2)-4 q](d+n)(q-p)}{8((d+n)(q-2)-8)}}\\
&=&\frac{1}{2}\left(1-\left\|V_{-}\right\|_{\frac{d+n}{4}} S^{-1}\right)^{\frac{(d+n)(q-p)}{(d+n)(q-2)-8}}\left[\frac{q(8-(d+n)(p-2))}{ 2(d+n)(q-p) C_{d,n, q}}\right]^{\frac{8-(d+n)(p-2)}{(d+n)(q-2)-8}} \Theta^{\frac{[(d+n)(q-2)-4 q][8-(d+n)(p-2)]}{8((d+n)(q-2)-8)}}\\
&&-\frac{C_{d,n, q} }{q}\left[\frac{q(8-(d+n)(p-2))\left(1-\|V_{-}\|_{\frac{d+n}{4}} S^{-1}\right)}{ 2(d+n)(q-p) C_{d,n, q}}\right]^{\frac{(d+n)(q-p)}{(d+n)(q-2)-8}} \Theta^{\frac{[(d+n)(q-2)-2 q][8-(d+n)(p-2)]}{8((d+n)(q-2)-8)}}.
\end{eqnarray*}
Hence,
$$
\psi(\bar{t})>\mu C_{N, p}\Theta^{\frac{4 p-(d+n)(p-2)}{8}}
$$
as long as
\begin{eqnarray*}
&&\mu C_{d,n, p}\Theta^{\frac{4 p-(d+n)(p-2)}{8}-\frac{[(d+n)(q-2)-4 q][8-(d+n)(p-2)]}{8((d+n)(q-2)-8)}}\\
&<&\frac{1}{2}\left(1-\left\|V_{-}\right\|_{\frac{d+n}{4}} S^{-1}\right)^{\frac{(d+n)(q-p)}{(d+n)(q-2)-8}}\left[\frac{q(8-(d+n)(p-2))}{ 2(d+n)(q-p) C_{d,n, q}}\right]^{\frac{8-(d+n)(p-2)}{(d+n)(q-2)-8}} \\
&&-\frac{C_{d,n, q} }{q}\left[\frac{q(8-(d+n)(p-2))\left(1-\|V_{-}\|_{\frac{d+n}{4}} S^{-1}\right)}{ 2(d+n)(q-p) C_{d,n, q}}\right]^{\frac{(d+n)(q-p)}{(d+n)(q-2)-8}}\\
&=&\left[\frac{1-\left\|V_{-}\right\|_{\frac{d+n}{4}} S^{-1}}{2(d+n)(q-p)}\right]^{\frac{(d+n)(q-p)}{(d+n)(q-2)-8}}\left[\frac{q(8-(d+n)(p-2))}{C_{d,n, q}}\right]^{\frac{8-(d+n)(p-2)}{(d+n)(q-2)-8}}\left[(d+n)(q-2)-8\right],
\end{eqnarray*}
that is,
\begin{equation*}
  \Theta<\left[\frac{1-\left\|V_{-}\right\|_{\frac{d+n}{4}} S^{-1}}{2(d+n)(q-p)}\right]^{\frac{d+n}{4}}\left[\frac{q(8-(d+n)(p-2))}{C_{d,n, q}}\right]^{\frac{8-(d+n)(p-2)}{4 (q-p)}}\left[\frac{N(q-2)-4}{\mu C_{d,n, p}}\right]^{\frac{(d+n)(q-2)-8}{4 (q-p)}}.
\end{equation*}
Hence, we take
\begin{equation*}
  \Theta_V=\left[\frac{1-\left\|V_{-}\right\|_{\frac{d+n}{4}} S^{-1}}{2(d+n)(q-p)}\right]^{\frac{d+n}{4}}\left[\frac{q(8-(d+n)(p-2))}{C_{d,n, q}}\right]^{\frac{8-(d+n)(p-2)}{4 (q-p)}}\left[\frac{N(q-2)-4}{\mu C_{d,n, p}}\right]^{\frac{(d+n)(q-2)-8}{4 (q-p)}}.
\end{equation*}
Now, let $0<\Theta<\Theta_V$ be fixed, we obtain
\begin{equation}\label{eq4.1}
  \psi(\bar{t})>\mu C_{d,n, p}\Theta^{\frac{4 p-(d+n)(p-2)}{8}}
\end{equation}
and $h_1(\bar{t})>0$. In view of $2<p<2+\frac{8}{d+n}<q<4^*$ and \eqref{eq4.1}, there exist $0<R_1<T_\Theta<R_2$ such that $h_1(t)<0$ for $0<t<R_1$ and for $t>R_2, h_1(t)>0$ for $R_1<t<R_2$, and $h_1\left(T_\Theta\right)=\max\limits_{t \in \mathbb{R}^{+}} h_1(t)>0$.

Define
$$
\mathcal{V}_{r, \Theta}=\left\{u \in S_{r, \Theta}:\|\Delta u\|_2^2 \leq T_\Theta^2\right\} .
$$
Let $\theta$ be the principal eigenvalue of operator $\Delta^2$ with Dirichlet boundary condition in $\Omega$, and let $|\Omega|$ be the volume of $\Omega$.
\begin{lemma}\label{L4.1}

(i) If $r<\frac{\sqrt{C \Theta}}{T_\Theta}$, then $\mathcal{V}_{r, \Theta}=\emptyset$.

(ii) If
$$
r> \max \left\{\frac{\sqrt{C\Theta}}{T_\Theta},\left(\frac{ \theta\left(1+\|V\|_{\frac{d+n}{4}} S^{-1}\right)}{2\mu} \Theta^{\frac{2-p}{2}}|\Omega|^{\frac{p-2}{2}}\right)^{\frac{2}{(d+n)(p-2)+8}}\right\}
$$
then $\mathcal{V}_{r, \Theta} \neq \emptyset$ and
$$
e_{r, \Theta}:=\inf _{u \in \mathcal{V}_{r, \Theta}} I_r(u)<0
$$
is attained at some interior point $u_r$ of $\mathcal{V}_{r, \Theta}$. As a consequence, there exists a Lagrange multiplier $\lambda_r \in \mathbb{R}$ such that $\left(\lambda_r, u_r\right)$ is a solution of \eqref{eq1.1}. Moreover $\liminf\limits_{r \rightarrow \infty} \lambda_r>0$ holds true.
\end{lemma}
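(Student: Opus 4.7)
The plan is to treat the two parts separately, relying on: a sharp Poincar\'e estimate coming from the principal eigenvalue $\theta/r^{4}$ of $\Delta^{2}$ on $\Omega_{r}\times\mathbb{T}^{n}$ with Dirichlet boundary conditions for part (i); and a constrained minimization on the ``well'' $\mathcal{V}_{r,\Theta}$ created by $h_{1}$ for part (ii).

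For (i), every $u\in S_{r,\Theta}$ satisfies $\|\Delta u\|_{2}^{2}\ge (\theta/r^{4})\|u\|_{2}^{2}=\theta\Theta/r^{4}$, since adding the $\mathbb{T}^{n}$ directions can only increase the principal eigenvalue of the biharmonic operator. Rearranging $\theta\Theta/r^{4}>T_{\Theta}^{2}$ gives the threshold stated in (i) and forces $\mathcal{V}_{r,\Theta}=\emptyset$ below it.

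For (ii), I would proceed in three steps. \textbf{Step 1} (nonemptiness and sign of $e_{r,\Theta}$): take $u:=v_{1/r}$, the rescaling of the positive principal eigenfunction $v_{1}\in S_{1,\Theta}$ defined as in \eqref{eq3.6}, so that $u\in S_{r,\Theta}$ and $\|\Delta u\|_{2}^{2}=\theta\Theta/r^{4}$. The first bound on $r$ in the hypothesis ensures $\|\Delta u\|_{2}^{2}\le T_{\Theta}^{2}$, hence $\mathcal{V}_{r,\Theta}\neq\emptyset$; the second bound on $r$ is precisely the statement that, after expanding $I_{r}(v_{1/r})$ in powers of $1/r$ exactly as in \eqref{eq3.6}, the leading $r^{-4}$ quadratic term is dominated by the mass-subcritical $-\frac{\mu}{p}\int|v_{1/r}|^{p}$ contribution, giving $I_{r}(v_{1/r})<0$ and therefore $e_{r,\Theta}<0$. \textbf{Step 2} (attainment at an interior point): for a minimizing sequence $\{u_{n}\}\subset\mathcal{V}_{r,\Theta}$, boundedness in $H_{0}^{2}(\Omega_{r}\times\mathbb{T}^{n})$ together with the compactness of the embedding into $L^{t}$ for $2\le t<4^{*}$ on this bounded domain yields strong convergence, so the weak limit $u_{r}$ lies in $S_{r,\Theta}$; weak lower semicontinuity of $u\mapsto\int|\Delta u|^{2}$ and of $u\mapsto\int Vu^{2}$ (via $V\in L^{(d+n)/4}$) give $\|\Delta u_{r}\|_{2}\le T_{\Theta}$ and $I_{r}(u_{r})=e_{r,\Theta}<0$. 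Because the lower estimate from $h_{1}$ shows $I_{r}(u)\ge h_{1}(T_{\Theta})>0$ whenever $\|\Delta u\|_{2}=T_{\Theta}$, the minimizer $u_{r}$ must satisfy $\|\Delta u_{r}\|_{2}<T_{\Theta}$, so it is a strictly interior point of $\mathcal{V}_{r,\Theta}$ and the Lagrange multiplier theorem produces $\lambda_{r}\in\mathbb{R}$ with $(\lambda_{r},u_{r})$ solving \eqref{eq1.1}.

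\textbf{Step 3} (the bound $\liminf_{r\to\infty}\lambda_{r}>0$) is the main obstacle. The plan is to mimic the argument of Lemma \ref{L3.3}: test the equation by $u_{r}$, combine with the Pohozaev identity, and exploit convexity of $\Omega$ to discard the nonnegative boundary term $\int_{\partial(\Omega_{r}\times\mathbb{T}^{n})}|\Delta u_{r}|^{2}((x,y)\cdot\mathbf{n})\,d\sigma$. This produces an identity of the form $\lambda_{r}\Theta\ge c_{1}\|\Delta u_{r}\|_{2}^{2}-c_{2}$ with $c_{1}>0$, after absorbing the $L^{p}$ and $L^{q}$ terms via the Gagliardo-Nirenberg inequality (Lemma \ref{L2.1}) and the bound $\|\Delta u_{r}\|_{2}\le T_{\Theta}$. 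The missing ingredient is a uniform lower bound $\|\Delta u_{r}\|_{2}^{2}\ge \kappa_{\Theta}>0$: because $I_{r}(u_{r})<0$ while the sub- and supercritical nonlinear terms are controlled by $\|\Delta u_{r}\|_{2}$ via Gagliardo-Nirenberg, $\|\Delta u_{r}\|_{2}$ cannot tend to $0$ as $r\to\infty$. The delicate point is that $I_{r}(u_{r})$ is bounded above only by $0$ and not by an explicit mountain-pass barrier as in the previous section, so the lower bound on $\|\Delta u_{r}\|_{2}$ must be extracted from the local-minimum structure (via the choice of $\Theta<\Theta_{V}$) rather than from a quantitative energy threshold.
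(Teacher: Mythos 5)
Your Steps 1 and 2 follow the paper's proof essentially verbatim: the same rescaled principal eigenfunction $v_{1/r}$ as a test function with negative energy (the second lower bound on $r$ is exactly calibrated so that the $-\frac{\mu}{p}\int|v_{1/r}|^{p}$ term beats the $r^{-4}\theta\Theta$ quadratic term), the same Ekeland/compact-embedding argument for attainment, and the same observation that $I_r(u)\ge h_1(T_\Theta)>0$ on $\partial\mathcal{V}_{r,\Theta}$ forces the minimizer to be interior. (A minor remark on (i): your eigenvalue bound scales like $r^{-4}$ and therefore does not reproduce the threshold $\sqrt{C\Theta}/T_\Theta$ as stated, which corresponds to an $r^{-2}$ Poincar\'e constant; the idea is the same but the rearrangement does not match the claimed cutoff.)

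The genuine gap is Step 3, and you have flagged it yourself: your Pohozaev-based strategy requires a uniform lower bound $\|\Delta u_r\|_2^2\ge\kappa_\Theta>0$ that you do not supply, and in addition the Pohozaev identity on $\Omega_r\times\mathbb{T}^n$ brings in $\widetilde V=\nabla V\cdot z$, whose boundedness is \emph{not} among the hypotheses of Theorem \ref{t1.2} (it is assumed in Theorems \ref{t1.1} and \ref{t1.3}, but not here), so that route is not available. The paper's actual argument is far more elementary and avoids both problems. Testing the equation with $u_r$ and substituting the definition of $I_r$ gives
\begin{equation*}
\lambda_r\Theta=\frac{q-2}{q}\int_{\Omega_r\times\mathbb{T}^n}|u_r|^q\,dxdy+\mu\,\frac{p-2}{p}\int_{\Omega_r\times\mathbb{T}^n}|u_r|^p\,dxdy-2I_r(u_r)>-2I_r(u_r)=-2e_{r,\Theta},
\end{equation*}
since $\mu>0$ and $p,q>2$. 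Because $H_0^2(\Omega_r\times\mathbb{T}^n)\subset H_0^2(\Omega_{r'}\times\mathbb{T}^n)$ for $r<r'$ and $T_\Theta$ is independent of $r$, the sets $\mathcal{V}_{r,\Theta}$ are nested and $e_{r,\Theta}$ is nonincreasing in $r$; hence $e_{r,\Theta}\le e_{r_\Theta,\Theta}<0$ for all $r>r_\Theta$, and $\lambda_r\Theta>-2e_{r_\Theta,\Theta}>0$ uniformly in $r$, which is exactly $\liminf_{r\to\infty}\lambda_r>0$. No lower bound on $\|\Delta u_r\|_2$, no Pohozaev identity, and no control on $\widetilde V$ are needed.
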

\begin{proof}
(i) The embedding inequality implies there exists a positive constant $C$(only depend on $\Omega$) such that
$$
\int_{\Omega_r\times\mathbb{T}^n}|\Delta u|^2 d xdy=\frac{1}{r^2}\int_{\Omega_r\times\mathbb{T}^n}|\Delta u|^2 d xdy \geq \frac{C}{r^2}\int_{\Omega_r\times\mathbb{T}^n}| u|^2 d xdy  =\frac{C \Theta}{r^2}
$$
for any $u \in S_{r, \Theta}$. Since $T_\Theta$ is independent of $r$, there holds $\mathcal{V}_{r, \Theta}=\emptyset$ if and only if $r<\frac{\sqrt{C \Theta}}{T_\Theta}$.

(ii) Let $v_1 \in S_{1, \Theta}$ be the positive normalized eigenfunction corresponding to $\theta$. Setting
\begin{equation}\label{eq4.2}
  r_\Theta=\max \left\{\frac{\sqrt{C\Theta}}{T_\Theta},\left[\frac{ \theta\left(1+\|V\|_{\frac{d+n}{4}} S^{-1}\right)}{2\mu} \Theta^{\frac{2-p}{2}}|\Omega_r\times\mathbb{T}^n|^{\frac{p-2}{2}}\right]^{\frac{2}{(d+n)(p-2)+8}}\right\}.
\end{equation}
Now, we construct for $r>r_\Theta$ a function $u_r \in S_{r, \Theta}$ such that $u_r \in \mathcal{V}_{r, \Theta}$ and $I_r\left(u_r\right)<0$. Clearly,
$$
\int_{\Omega_r\times\mathbb{T}^n}\left|\nabla v_1\right|^2 d xdy=\theta \Theta,\ \Theta=\int_{\Omega_r\times\mathbb{T}^n}\left|v_1\right|^2 d xdy \leq\left(\int_{\Omega_r\times\mathbb{T}^n}\left|v_1\right|^{p} d xdy\right)^{\frac{2}{p}}|\Omega|^{\frac{p-2}{p}} .
$$
Define $u_r \in S_{r, \Theta}$ by $u_r(x,y)=r^{-\frac{d+n}{2}} v_1\left(r^{-1} x,r^{-1} y\right)$ for $(x,y) \in \Omega_r\times\mathbb{T}^n$. Then
\begin{equation}\label{eq4.3}
  \int_{\Omega_r\times\mathbb{T}^n}\left|\Delta u_r\right|^2 d xdy=r^{-4} \theta \Theta \quad \text { and } \quad \int_{\Omega_r\times\mathbb{T}^n}\left|u_r\right|^{p} d xdy \geq r^{\frac{(d+n)(2-p)}{2}} \Theta^{\frac{p}{2}}|\Omega_r\times\mathbb{T}^n|^{\frac{2-p}{2}}.
\end{equation}
By \eqref{eq4.2}, \eqref{eq4.3}, $2<p<2+\frac{8}{d+n}$ and a direct calculation we have $u_r \in \mathcal{V}_{r, \Theta}$ and
\begin{eqnarray*}
I_r\left(u_r\right)
& \leq&\frac{1}{2}\left(1+\|V\|_{\frac{d+n}{4}} S^{-1}\right) r^{-4} \theta \Theta-\mu r^{\frac{(d+n)(2-p)}{2}} \Theta^{\frac{p}{2}}|\Omega|^{\frac{2-p}{2}} \\
&<& 0 .
\end{eqnarray*}
It then follows from the Gagliardo-Nirenberg inequality that
\begin{eqnarray}\label{eq4.5}
&&I_r\left(u_r\right)\nonumber\\
&\geq& \frac{1-\|V_{-}\|_{\frac{d+n}{4}} S^{-1}}{2}  \int_{\Omega_r\times\mathbb{T}^n}|\Delta u|^2 d xdy-C_{d,n, p} \mu \Theta^{\frac{4 p-(d+n)(p-2)}{8}}\left(\int_{\Omega_r\times\mathbb{T}^n}|\Delta u|^2 d xdy\right)^{\frac{(d+n)(p-2)}{8}} \nonumber\\
& &-\frac{C_{d,n, q}}{q} \Theta^{\frac{4 q-(d+n)(q-2)}{8}}\left(\int_{\Omega_r\times\mathbb{T}^n}|\Delta u|^2 d xdy\right)^{\frac{(d+n)(q-2)}{8}} .
\end{eqnarray}
As a consequence $I_r$ is bounded from below in $\mathcal{V}_{r, \Theta}$. By the Ekeland principle there exists a sequence $\left\{u_{n, r}\right\} \subset \mathcal{V}_{r, \Theta}$ such that
$$
I_r(u_{n, r}) \rightarrow \inf\limits_{u \in \mathcal{V}_{r, \Theta}} I_r(u),\  I_r^{\prime}(u_{n, r})|_{T_{u_n, r} S_{r, \Theta}} \rightarrow 0 \ \text {as}\ n \rightarrow \infty
$$
Consequently there exists $u_r \in H_0^2(\Omega_r\times\mathbb{T}^n)$ such that $u_{n, r} \rightharpoonup u_r$ in $H_0^2(\Omega_r\times\mathbb{T}^n)$ and
\begin{equation*}
  u_{n, r} \rightarrow u_r \ \text{in}\ L^k(\Omega_r\times\mathbb{T}^n) \ \text{for all}\ 2 \leq k<4^*.
\end{equation*}
Moreover,
$
\left\|\Delta u_r\right\|_2^2 \leq \liminf\limits_{n \rightarrow \infty}\left\|\Delta u_{n, r}\right\|_2^2 \leq T_\Theta^2,
$
that is, $u_r \in \mathcal{V}_{r, \Theta}$. Note that
$$
\int_{\Omega_r\times\mathbb{T}^n} V u_{n, r}^2 d x dy\rightarrow \int_{\Omega_r\times\mathbb{T}^n} V u_r^2 d xdy \ \text {as}\ n \rightarrow \infty,
$$
hence
$$
e_{r, \Theta} \leq I_r(u_r) \leq \liminf\limits_{n \rightarrow \infty} I_r(u_{n, r})=e_{r, \Theta}.
$$
It follows that $u_{n, r} \rightarrow u_r$ in $H_0^2\left(\Omega_r\right)$, so $I_r(u_r)<0$. Therefore $u$ is an interior point of $\mathcal{V}_{r, \Theta}$ because $I_r(u) \geq h_1(T_\Theta)>0$ for any $u \in \partial \mathcal{V}_{r, \Theta}$ by \eqref{eq4.5}. The Lagrange multiplier theorem implies that there exists $\lambda_r \in \mathbb{R}$ such that $(\lambda_r, u_r)$ is a solution of \eqref{eq1.1}. Moreover,
\begin{eqnarray}\label{eq4.6}
\lambda_r \Theta & =&\int_{\Omega_r\times\mathbb{T}^n}\left|u_r\right|^q d xdy+\mu\int_{\Omega_r\times\mathbb{T}^n}\left|u_r\right|^p d xdy-\int_{\Omega_r\times\mathbb{T}^n}\left|\Delta u_r\right|^2 d xdy-\int_{\Omega_r\times\mathbb{T}^n} V u_r^2 d xdy \nonumber\\
& =&\frac{q-2}{q}\int_{\Omega_r\times\mathbb{T}^n}\left|u_r\right|^q d xdy+\mu \int_{\Omega_r\times\mathbb{T}^n}\left|u_r\right|^p d xdy -\frac{2\mu}{p}\int_{\Omega_r\times\mathbb{T}^n}\left|u_r\right|^p d xdy-2 I_r(u_r) \nonumber\\
& >&-2 I_r(u_r)=-2 e_{r, \Theta} .
\end{eqnarray}
It follows from the definition of $e_{r, \Theta}$ that $e_{r, \Theta}$ is nonincreasing with respect to $r$. Hence, $e_{r, \Theta} \leq e_{r_\Theta, \Theta}<0$ for any $r>r_\Theta$ and $0<\Theta<\Theta_V$. In view of \eqref{eq4.6}, we have $\liminf\limits_{r \rightarrow \infty} \lambda_r>0$.
\end{proof}

\noindent\textbf{Proof of Theorem \ref{t1.2}} The proof is a direct consequence of Lemma \ref{L4.1} and Lemma \ref{L3.5}.

\section{Proof of Theorem \ref{t1.3}}

In this subsection we assume that the assumptions of Theorem \ref{t1.3} hold. For $s \in\left[\frac{1}{2}, 1\right]$, $\mu>0$, we define the functional $J_{r, s}: S_{r, \Theta} \rightarrow \mathbb{R}$ by
$$
J_{r, s}(u)=\frac{1}{2} \int_{\Omega_r\times\mathbb{T}^n}(|\Delta u|^2 d xdy +V u^2) d xdy-s\left(\frac{1}{q} \int_{\Omega_r\times\mathbb{T}^n}|u|^q d xdy+\frac{\mu}{p} \int_{\Omega_r\times\mathbb{T}^n}|u|^p d xdy\right) .
$$
Note that if $u \in S_{r, \Theta}$ is a critical point of $J_{r, s}$ then there exists $\lambda \in \mathbb{R}$ such that $(\lambda, u)$ is a solution of the problem
\begin{equation}\label{eq5.1}
 \left\{\aligned
& \Delta^2 u+V u+\lambda u=s|u|^{q-2}u+s\mu |u|^{p-2}u, &(x,y)\in \Omega_r\times\mathbb{T}^n, \\
& \int_{\Omega_r\times\mathbb{T}^n} |u|^2dxdy=\Theta,u\in H_0^2(\Omega_r\times\mathbb{T}^n), &(x,y)\in \Omega_r\times\mathbb{T}^n.
\endaligned
\right.
\end{equation}
\begin{lemma}\label{L5.1}
For $0<\Theta<\widetilde{\Theta}_V$ where $\widetilde{\Theta}_V$ is defined in Theorem \ref{t1.3}, there exist $\widetilde{r}_\Theta>0$ and $u^0, u^1 \in S_{r_\Theta, \Theta}$ such that

(i) For $r>\widetilde{r}_\Theta$ and $s \in\left[\frac{1}{2}, 1\right]$ we have $J_{r, s}\left(u^1\right) \leq 0$ and
$$
J_{r, s}\left(u^0\right)<\frac{((d+n)(q-2)-8)}{8}\left[\frac{4\left(1-\left\|V_{-}\right\|_{\frac{d+n}{4}} S^{-1}\right)}{(d+n)(q-2)}\right]^{\frac{(d+n)(q-2)}{(d+n)(q-2)-8}} A^{\frac{8}{8-(d+n)(q-2)}} \Theta^{\frac{(d+n)(q-2)-4 q}{(d+n)(q-2)-8}},
$$
where
$$
A=\left(\frac{C_{d,n,q}(q-2)((d+n)(q-2)-8)}{q(p-2)(8-(d+n)(p-2))} +\frac{C_{d,n, q}}{q}\right).
$$
Moreover,
$$
\left\|\Delta u^0\right\|_2^2<\left[\frac{4\left(1-\left\|V_{-}\right\|_{\frac{d+n}{4}} S^{-1}\right)}{(d+n)(q-2) A}\right]^{\frac{8}{(d+n)(q-2)-8}} \Theta^{\frac{(d+n)(q-2)-4 q}{(d+n)(q-2)-8}}
$$
and
$$
\left\|\Delta u^1\right\|_2^2>\left[\frac{4\left(1-\left\|V_{-}\right\|_{\frac{d+n}{4}} S^{-1}\right)}{(d+n)(q-2) A}\right]^{\frac{8}{(d+n)(q-2)-8}} \Theta^{\frac{(d+n)(q-2)-4 q}{(d+n)(q-2)-8}}.
$$

(ii) If $u \in S_{r, \Theta}$ satisfies
$$
\|\Delta u\|_2^2=\left[\frac{4\left(1-\left\|V_{-}\right\|_{\frac{d+n}{4}} S^{-1}\right)}{(d+n)(q-2) A}\right]^{\frac{8}{(d+n)(q-2)-8}} \Theta^{\frac{(d+n)(q-2)-4 q}{(d+n)(q-2)-8}},
$$
then there holds
$$
J_{r, s}(u) \geq \frac{((d+n)(q-2)-8)}{8}\left[\frac{4\left(1-\left\|V_{-}\right\|_{\frac{d+n}{4}} S^{-1}\right)}{(d+n)(q-2)}\right]^{\frac{(d+n)(q-2)}{(d+n)(q-2)-8}} A^{\frac{8}{8-(d+n)(q-2)}} \Theta^{\frac{(d+n)(q-2)-4 q}{(d+n)(q-2)-8}}.
$$

(iii) Let
$$
m_{r, s}(\Theta)=\inf _{\gamma \in \Gamma_{r, \Theta}} \sup\limits_{t \in[0,1]} J_{r, s}(\gamma(t)),
$$
where
$$
\Gamma_{r, \Theta}=\left\{\gamma \in C\left([0,1], S_{r, \Theta}\right): \gamma(0)=u^0, \gamma(1)=u^1\right\}.
$$
Then
$$
m_{r, s}(\Theta) \geq \frac{((d+n)(q-2)-8)}{8}\left[\frac{4\left(1-\left\|V_{-}\right\|_{\frac{d+n}{4}} S^{-1}\right)}{(d+n)(q-2)}\right]^{\frac{(d+n)(q-2)}{(d+n)(q-2)-8}} A^{\frac{8}{8-(d+n)(q-2)}} \Theta^{\frac{(d+n)(q-2)-4 q}{(d+n)(q-2)-8}}
$$
and
\begin{eqnarray*}
  m_{r, s}(\Theta) &\leq& \frac{(d+n)(q-2)-4}{2}\left(\frac{\theta\left(1+\|V\|_{\frac{d+n}{4}} S^{-1}\right)}{(d+n)(q-2)}\right)^{\frac{(d+n)(q-2)}{(d+n)(q-2)-4}}(4 q)^{\frac{4}{(d+n)(q-2)-4}}\\
 && \cdot|\Omega\times\mathbb{T}^n|^{\frac{2(q-2)}{(d+n)(q-2)-4}} \Theta^{\frac{(d+n)(q-2)-2 q}{(d+n)(q-2)-4}}.
\end{eqnarray*}
where $\theta$ is the principal eigenvalue of $\Delta^2$ with Dirichlet boundary condition in $\Omega\times\mathbb{T}^n$.
\end{lemma}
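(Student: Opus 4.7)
The plan is to follow the template of Lemma \ref{L3.1}, with one essential new ingredient. Since $\mu>0$, the $L^p$ term contributes with the ``wrong'' sign to the standard Gagliardo--Nirenberg lower bound of $J_{r,s}$, which in principle destroys the mountain-pass landscape. To preserve it I will absorb this $L^p$ Gagliardo--Nirenberg contribution into the $L^q$ one at the critical value $t_0:=[\frac{4(1-\|V_-\|_{\frac{d+n}{4}}S^{-1})}{(d+n)(q-2)A}]^{\frac{8}{(d+n)(q-2)-8}}\Theta^{\frac{(d+n)(q-2)-4q}{(d+n)(q-2)-8}}$ of $\|\Delta u\|_2^2$ appearing in (ii); the constant $A=(A_{p,q}+1)C_{d,n,q}/q$ and the smallness condition $\Theta<\widetilde\Theta_V$ are tuned precisely so this absorption is tight at $t_0$.

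Concretely, for any $u\in S_{r,\Theta}$ and any $s\in[\frac12,1]$, the H\"older bound $\int V_-u^2\le\|V_-\|_{\frac{d+n}{4}}S^{-1}\|\Delta u\|_2^2$ together with two applications of Lemma \ref{L2.1} gives, writing $t=\|\Delta u\|_2^2$,
\begin{equation*}
J_{r,s}(u)\ge\frac{1-\|V_-\|_{\frac{d+n}{4}}S^{-1}}{2}\,t-\frac{sC_{d,n,q}}{q}\Theta^{\frac{4q-(d+n)(q-2)}{8}}t^{\frac{(d+n)(q-2)}{8}}-\frac{s\mu C_{d,n,p}}{p}\Theta^{\frac{4p-(d+n)(p-2)}{8}}t^{\frac{(d+n)(p-2)}{8}}.
\end{equation*}
Using $s\le1$, the hypothesis $\Theta<\widetilde\Theta_V$ is precisely the inequality ensuring that, at $t=t_0$, the last term is dominated by $(A_{p,q}C_{d,n,q}/q)\Theta^{\frac{4q-(d+n)(q-2)}{8}}t^{\frac{(d+n)(q-2)}{8}}$. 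Combined with the definition of $A$, this yields $J_{r,s}(u)\ge\widetilde h(t)$ at such $t$, where
\begin{equation*}
\widetilde h(t):=\frac{1-\|V_-\|_{\frac{d+n}{4}}S^{-1}}{2}\,t-A\,\Theta^{\frac{4q-(d+n)(q-2)}{8}}t^{\frac{(d+n)(q-2)}{8}}.
\end{equation*}
A direct computation of $\widetilde h'(t)=0$ identifies the unique critical point of $\widetilde h$ with the value of $\|\Delta u\|_2^2$ recorded in (ii) and its maximum value with the stated threshold; this gives (ii). The first inequality in (iii) then follows by continuity: by (i) below, any $\gamma\in\Gamma_{r,\Theta}$ has $\|\Delta\gamma(0)\|_2^2<t_0<\|\Delta\gamma(1)\|_2^2$, so it crosses the sphere $\|\Delta u\|_2^2=t_0$ at some $\tau^\star$, and $J_{r,s}(\gamma(\tau^\star))\ge\widetilde h(t_0)$.

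For (i) and the upper bound in (iii), I dilate the principal Dirichlet $\Delta^2$-eigenfunction $v_1\in S_{1,\Theta}$ exactly as in Lemma \ref{L3.1}, setting $v_t(x,y)=t^{(d+n)/2}v_1(tx,ty)$, so that $\|\Delta v_t\|_2^2=t^4\theta\Theta$ and $\|v_t\|_r^r=t^{(d+n)(r-2)/2}\|v_1\|_r^r$ for $r\in\{p,q\}$. A direct substitution combined with $\|V\|_{\frac{d+n}{4}}S^{-1}$ controlling the potential term and the reverse H\"older bounds $\|v_1\|_r^r\ge\Theta^{r/2}|\Omega\times\mathbb{T}^n|^{(2-r)/2}$ produce an explicit scalar upper bound $J_{r,s}(v_t)\le H(t)$ that is negative near $t=0$ (dominated by the negative $L^p$ term because $\mu>0$ and $(d+n)(p-2)/2<4$) and negative for large $t$ (dominated by the negative $L^q$ term). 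I choose $u^0=v_{t_1}$ with $t_1$ small enough to satisfy the stated upper bounds in (i) on both $\|\Delta u^0\|_2^2$ and $J_{r,s}(u^0)$, $u^1=v_{t_2}$ with $t_2$ large enough that $J_{r,s}(u^1)\le0$ and $\|\Delta u^1\|_2^2>t_0$, and take $\widetilde r_\Theta$ to be the smallest $r$ making both $v_{t_j}\in S_{r,\Theta}$. The upper bound on $m_{r,s}(\Theta)$ is then obtained by using the straight-line-in-scale path $\gamma(\tau)=v_{(1-\tau)t_1+\tau t_2}$ and maximizing $H$ in $t$; after discarding the negative $L^p$ term this reduces to the elementary maximization of $c_1t^4-c_2t^{(d+n)(q-2)/2}$ with $c_1=(1+\|V\|_{\frac{d+n}{4}}S^{-1})\theta\Theta/2$ and $c_2$ proportional to $\Theta^{q/2}|\Omega\times\mathbb{T}^n|^{(2-q)/2}/q$, whose critical-point value matches exactly the expression stated on the right-hand side.

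The main obstacle is the absorption step in paragraph two: one must verify that the constant $A=(A_{p,q}+1)C_{d,n,q}/q$ and the threshold $\widetilde\Theta_V$ encode precisely the inequality needed to bound the $L^p$ Gagliardo--Nirenberg contribution by $A_{p,q}/(A_{p,q}+1)$ times the $A$-scaled $L^q$ contribution at $t=t_0$. The algebra involves the critical exponents $(d+n)(q-2)/8$, $(d+n)(p-2)/8$ and their differences, and is the delicate (but elementary) bookkeeping piece of the argument.
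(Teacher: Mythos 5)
Your proposal is correct and follows essentially the same route as the paper's proof: a Gagliardo--Nirenberg lower bound for $J_{r,s}$ in which the $\mu$-term is absorbed into the $L^q$-term (producing the two-term comparison function with the constant $A$, whose maximizer is exactly the level appearing in (ii)), rescaled principal eigenfunctions of $\Delta^2$ for the endpoints $u^0,u^1$ and the upper bound, and a crossing-of-the-sphere argument for the lower bound on $m_{r,s}(\Theta)$. The only caveats are bookkeeping ones already latent in the paper: the paper carries out the absorption on the whole ray $t\ge t_2$ (the inflection point of $g_1$) rather than only at the critical value, and the reduction of $\Theta<\widetilde{\Theta}_V$ to the required inequality at that value is precisely the step you defer; moreover your claim that maximizing $c_1t^4-c_2t^{(d+n)(q-2)/2}$ ``matches exactly'' the stated upper bound in (iii) should be rechecked, since that maximization yields exponents with denominator $(d+n)(q-2)-8$ rather than the $(d+n)(q-2)-4$ appearing in the statement --- a discrepancy that is inherited from the lemma as printed rather than introduced by your argument.
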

\begin{proof}
Let $v_1 \in S_{1, \Theta}$ be the positive normalized eigenfunction of $\Delta^2$ with Dirichlet boundary condition in $\Omega\times\mathbb{T}^n$ associated to $\theta$, then we have
\begin{equation}\label{eq5.2}
  \int_{\Omega\times\mathbb{T}^n}\left|\Delta v_1\right|^2 d xdy=\theta \Theta.
\end{equation}
By the H\"older inequality, we know
\begin{equation}\label{eq5.3}
  \int_{\Omega\times\mathbb{T}^n}|v_1(x)|^{p} dxdy\geq\Theta^{\frac{p}{2}}\cdot|\Omega\times\mathbb{T}^n|^{\frac{2-p}{2}}.
\end{equation}
Setting $v_t(x)=t^{\frac{d+n}{2}} v_1(t x,ty)$ for $(x,y) \in B_{\frac{1}{t}}$ and $t>0$. Using  \eqref{eq5.2}, \eqref{eq5.3} and $\frac{1}{2}\leq s\leq1$, we get
\begin{eqnarray}\label{eq5.4}
J_{\frac{1}{t}, s}\left(v_t\right)
&\leq & \frac{1+\|V\|_{\frac{d+n}{4}} S^{-1}}{2}  t^4 \theta\Theta- \frac{\mu}{2}  t^{\frac{(d+n)(p-2)}{2}}   \int_{\Omega\times\mathbb{T}^n}|v_1|^{p} d xdy \nonumber\\
&&-\frac{1}{2q} t^{\frac{(d+n)(q-2)}{2}} \Theta^{\frac{q}{2}}\cdot|\Omega\times\mathbb{T}^n|^{\frac{2-q}{2}}\nonumber\\
&\leq: & h_2(t) .
\end{eqnarray}
where
\begin{equation*}
  h_2(t)=\frac{1}{2}\left(1+\|V\|_{\frac{d+n}{4}} S^{-1}\right) t^4 \theta\Theta-\frac{1}{2q} t^{\frac{(d+n)(q-2)}{2}} \Theta^{\frac{q}{2}}\cdot|\Omega\times\mathbb{T}^n|^{\frac{2-q}{2}}
\end{equation*}
A simple computation shows that $h_2(t_0)=0$ for
$$
t_0:=\left[\left(1+\|V\|_{\frac{d+n}{2}} S^{-1}\right) q \theta \Theta^{\frac{2-q}{2}}|\Omega\times\mathbb{T}^n|^{\frac{q-2}{2}}\right]^{\frac{2}{(d+n)(q-2)-8}}
$$
and $h_2(t)<0$ for any $t>t_0, h_2(t)>0$ for any $0<t<t_0$. Moreover, $h_2(t)$ achieves its maximum at
$$
t_\Theta=\left[\frac{4 q\left(1+\|V\|_{\frac{d+n}{2}} S^{-1}\right) \theta}{(d+n)(q-2)} \Theta^{\frac{2-q}{2}}|\Omega\times\mathbb{T}^n|^{\frac{q-2}{2}}\right]^{\frac{2}{(d+n)(q-2)-8}} .
$$
This implies
\begin{equation}\label{eq5.5}
  J_{r, s}(v_{t_0})=J_{\frac{1}{t_0}, s}(v_{t_0}) \leq h_2(t_0)=0
\end{equation}
for any $r \geq \frac{1}{t_0}$ and $s \in\left[\frac{1}{2}, 1\right]$. There exists $0<t_1<t_\Theta$ such that for any $t \in\left[0, t_1\right]$,
\begin{equation}\label{eq5.6}
h_2(t)<\frac{((d+n)(q-2)-8)}{8}\left[\frac{4\left(1-\left\|V_{-}\right\|_{\frac{d+n}{4}} S^{-1}\right)}{(d+n)(q-2)}\right]^{\frac{(d+n)(q-2)}{(d+n)(q-2)-8}} A^{\frac{8}{8-(d+n)(q-2)}} \Theta^{\frac{(d+n)(q-2)-4 q}{(d+n)(q-2)-8}}  .
\end{equation}
On the other hand, it follows from  the Gagliardo-Nirenberg inequality and the H\"older inequality that
\begin{eqnarray}\label{eq5.7}
&&J_{r, s}(u)\nonumber\\
&\geq&\frac{1-\left\|V_{-}\right\|_{\frac{d+n}{4}} S^{-1}}{2}  \int_{\Omega_r\times\mathbb{T}^n}|\Delta u|^2 d xdy-\frac{C_{d,n, q} \Theta^{\frac{4 q-(d+n)(q-2)}{8}}}{q}\left(\int_{\Omega_r\times\mathbb{T}^n}|\Delta u|^2 d xdy\right)^{\frac{(d+n)(q-2)}{8}}\nonumber\\
&&-\mu C_{d,n, p} \Theta^{\frac{4 p-(d+n)(p-2)}{8}}\left(\int_{\Omega_r\times\mathbb{T}^n}|\Delta u|^2 d xdy\right)^{\frac{(d+n)(p-2)}{8}}.
\end{eqnarray}
Define
\begin{eqnarray*}
g_1(t)&:=&\frac{1-\left\|V_{-}\right\|_{\frac{d+n}{4}} S^{-1}}{2} t-\frac{C_{d,n, q} \Theta^{\frac{4 q-(d+n)(q-2)}{8}}}{q} t^{\frac{(d+n)(q-2)}{8}}-\mu C_{d,n, p} \Theta^{\frac{4 p-(d+n)(p-2)}{8}}t^{\frac{(d+n)(p-2)}{8}}\\
&=&t^{\frac{(d+n)(p-2)}{8}}\left[\frac{1}{2}\left(1-\left\|V_{-}\right\|_{\frac{d+n}{2}} S^{-1}\right) t^{\frac{8-(d+n)(p-2)}{8}}-\frac{C_{d,n, q} \Theta^{\frac{4 q-(d+n)(q-2)}{8}}}{q}t^{\frac{(d+n)(q-p)}{8}}\right]\\
&&-\mu C_{d,n, p} \Theta^{\frac{4 p-(d+n)(p-2)}{8}}t^{\frac{(d+n)(p-2)}{8}}
\end{eqnarray*}
In view of $2<p<2+\frac{8}{d+n}<q<4^*$ and the definition of $\widetilde{\Theta}_V$, there exist $0<l_1<l_M<l_2$ such that $ g_1(t)<0$ for any $0<t<l_1$ and $t>l_2, g_1(t)>0$ for $l_1<t<l_2$ and $g_1\left(l_M\right)=\max\limits_{t \in \mathbb{R}^{+}} g_1(t)>0$. Let
$$
t_2=\left(\frac{\mu q C_{d,n, p}(p-2)(8-(d+n)(p-2))}{C_{d,n, q}(q-2)((d+n)(q-2)-8)}\right)^{\frac{8}{(d+n)(q-p)}} \Theta^{\frac{d+n-4}{d+n}} .
$$
Then by a direct calculation, we have $g_1^{\prime \prime}(t) \leq 0$ if and only if $t \geq t_2$. Hence
$$
\max\limits_{t \in \mathbb{R}^{+}} g_1(t)=\max _{t \in\left[t_2, \infty\right)} g_1(t).
$$
Note that for any $t \geq t_2$,
\begin{eqnarray}\label{eq5.8}
g_1(t) & =&\frac{1-\left\|V_{-}\right\|_{\frac{d+n}{4}} S^{-1}}{2} t-\frac{C_{d,n, q} \Theta^{\frac{4 q-(d+n)(q-2)}{8}}}{q} t^{\frac{(d+n)(q-2)}{8}}-\mu C_{d,n, p} \Theta^{\frac{4 p-(d+n)(p-2)}{8}}t^{\frac{(d+n)(p-2)}{8}} \nonumber\\
& =&\frac{1}{2}\left(1-\left\|V_{-}\right\|_{\frac{d+n}{4}} S^{-1}\right) t-\mu C_{d,n, p} \Theta^{\frac{(q-p)(d+n-4)}{8}}\cdot\Theta^{\frac{4 q-(d+n)(q-2)}{8}}t^{\frac{(d+n)(p-2)}{8}}\nonumber\\
&&-\frac{C_{d,n, q} \Theta^{\frac{4 q-(d+n)(q-2)}{8}}}{q} t^{\frac{(d+n)(q-2)}{8}} \nonumber\\
& =&\frac{1}{2}\left(1-\left\|V_{-}\right\|_{\frac{d+n}{4}} S^{-1}\right) t-\frac{C_{d,n,q}(q-2)((d+n)(q-2)-8)}{q(p-2)(8-(d+n)(p-2))}t_2^{\frac{(d+n)(q-p)}{8}}\nonumber\\
&& \cdot\Theta^{\frac{4 q-(d+n)(q-2)}{8}}t^{\frac{(d+n)(p-2)}{8}}-\frac{C_{N, q} \Theta^{\frac{2 q-N(q-2)}{4}}}{q} t^{\frac{N(q-2)}{4}} \nonumber\\
&\geq&\frac{1-\left\|V_{-}\right\|_{\frac{d+n}{4}} S^{-1}}{2} t-\frac{C_{d,n,q}(q-2)((d+n)(q-2)-8)}{q(p-2)(8-(d+n)(p-2))}  \cdot\Theta^{\frac{4 q-(d+n)(q-2)}{8}}t^{\frac{(d+n)(q-2)}{8}}\nonumber\\
&&-\frac{C_{d,n, q} \Theta^{\frac{4 q-(d+n)(q-2)}{8}}}{q} t^{\frac{(d+n)(q-2)}{8}} \nonumber\\
&=& \frac{1-\left\|V_{-}\right\|_{\frac{d+n}{4}} S^{-1}}{2} t-\left(\frac{C_{d,n,q}(q-2)((d+n)(q-2)-8)}{q(p-2)(8-(d+n)(p-2))} +\frac{C_{d,n, q}}{q}\right) \Theta^{\frac{4 q-(d+n)(q-2)}{8}} \nonumber\\
&&\cdot t^{\frac{(d+n)(q-2)}{8}}\nonumber\\
& =:& g_2(t) .
\end{eqnarray}

Now, we will determine the value of $\widetilde{\Theta}_V$. In fact, $g_1\left(l_M\right)=\max\limits_{t \in \mathbb{R}^{+}} g_1(t)>0$ as long as $g_2(t_2)>0$, that is,
\begin{eqnarray*}
g_2(t_2)&=&\frac{1-\left\|V_{-}\right\|_{\frac{d+n}{4}} S^{-1}}{2} t_2-\left(\frac{C_{d,n,q}(q-2)((d+n)(q-2)-8)}{q(p-2)(8-(d+n)(p-2))} +\frac{C_{d,n, q}}{q}\right) \Theta^{\frac{4 q-(d+n)(q-2)}{8}}\\
&&\cdot t_2^{\frac{(d+n)(q-2)}{8}}\\
&=&\frac{1}{2}\left(1-\left\|V_{-}\right\|_{\frac{d+n}{4}} S^{-1}\right)\left(\frac{\mu q C_{d,n, p}}{C_{d,n, q}A_{p,q}}\right)^{\frac{8}{(d+n)(q-p)}} \Theta^{\frac{d+n-4}{d+n}}-\left(\frac{C_{d,n,q}}{q}A_{p,q}+\frac{C_{d,n, q}}{q}\right)\\
&&\cdot\Theta^{\frac{4 q-(d+n)(q-2)}{8}}\cdot\left[\left(\frac{\mu q C_{d,n, p}}{C_{d,n, q}A_{p,q}}\right)^{\frac{8}{(d+n)(q-p)}}\right]^{\frac{(d+n)(q-2)}{8}}\cdot\Theta^{\frac{d+n-4}{d+n}\cdot\frac{(d+n)(q-2)}{8}}\\
&=&\frac{1}{2}\left(1-\left\|V_{-}\right\|_{\frac{d+n}{4}} S^{-1}\right)\left(\frac{\mu q C_{d,n, p}}{C_{d,n, q}A_{p,q}}\right)^{\frac{8}{(d+n)(q-p)}} \Theta^{\frac{d+n-4}{d+n}}-\left(\frac{C_{d,n,q}}{q}A_{p,q}+\frac{C_{d,n, q}}{q}\right)\cdot\Theta\\
&&\cdot\left(\frac{\mu q C_{d,n, p}}{C_{d,n, q}A_{p,q}}\right)^{\frac{q-2}{q-p}}\\
&>&0,
\end{eqnarray*}
where
$$
A_{p, q}=\frac{(q-2)((d+n)(q-2)-8)}{(p-2)(8-(d+n)(p-2))}.
$$
Hence, we obtain that
\begin{equation*}
  \frac{1-\left\|V_{-}\right\|_{\frac{d+n}{4}} S^{-1}}{2} \left(\frac{\mu q C_{d,n, p}}{C_{d,n, q}A_{p,q}}\right)^{\frac{8}{(d+n)(q-p)}} \Theta^{\frac{-4}{d+n}}>\left(\frac{C_{d,n,q}}{q}A_{p,q}+\frac{C_{d,n, q}}{q}\right)\left(\frac{\mu q C_{d,n, p}}{C_{d,n, q}A_{p,q}}\right)^{\frac{q-2}{q-p}},
\end{equation*}
which implies
\begin{equation*}
   \left(\frac{1-\left\|V_{-}\right\|_{\frac{d+n}{4}} S^{-1}}{2}\right)^{\frac{d+n}{4}}\left(\frac{C_{d,n,q}}{q}A_{p,q}+\frac{C_{d,n, q}}{q}\right)^{-\frac{d+n}{4}}\left(\frac{\mu q C_{d,n, p}}{C_{d,n, q}A_{p,q}}\right)^{\frac{8-(d+n)(q-2)}{4(d+n)(q-p)}} >\Theta,
\end{equation*}
Hence, we take
\begin{equation*}
 \widetilde{\Theta}_V= \left(\frac{1-\left\|V_{-}\right\|_{\frac{d+n}{4}} S^{-1}}{2}\right)^{\frac{d+n}{4}}\left(\frac{C_{d,n,q}}{q}A_{p,q}+\frac{C_{d,n, q}}{q}\right)^{-\frac{d+n}{4}}\left(\frac{\mu q C_{d,n, p}}{C_{d,n, q}A_{p,q}}\right)^{\frac{8-(d+n)(q-2)}{4(d+n)(q-p)}}.
\end{equation*}
Let
$$
A=\left(\frac{C_{d,n,q}(q-2)((d+n)(q-2)-8)}{q(p-2)(8-(d+n)(p-2))} +\frac{C_{d,n, q}}{q}\right)
$$
and
$$
t_g=\left[\frac{4\left(1-\left\|V_{-}\right\|_{\frac{d+n}{4}} S^{-1}\right)}{(d+n)(q-2) A}\right]^{\frac{8}{(d+n)(q-2)-8}} \Theta^{\frac{(d+n)(q-2)-4 q}{(d+n)(q-2)-8}},
$$
so that $t_g>t_2$ by the definition of $\widetilde{\Theta}_V, \max\limits_{t \in\left[t_2, \infty\right)} g_2(t)=g_2(t_g)$ and
\begin{eqnarray*}
&&\max\limits_{t \in \mathbb{R}^{+}} g_1(t) \\
& \geq& \max\limits_{t \in\left[t_2, \infty\right)} g_2(t) \\
&=&\frac{1}{2}\left(1-\left\|V_{-}\right\|_{\frac{d+n}{4}} S^{-1}\right) t_g-A \Theta^{\frac{4 q-(d+n)(q-2)}{8}}t_g^{\frac{(d+n)(q-2)}{8}}\\
&=&\frac{1}{2}\left(1-\left\|V_{-}\right\|_{\frac{d+n}{4}} S^{-1}\right) \left[\frac{4\left(1-\left\|V_{-}\right\|_{\frac{d+n}{4}} S^{-1}\right)}{(d+n)(q-2) A}\right]^{\frac{8}{(d+n)(q-2)-8}} \Theta^{\frac{(d+n)(q-2)-4 q}{(d+n)(q-2)-8}}\\
&&-A \Theta^{\frac{4 q-(d+n)(q-2)}{8}}\left[\frac{4\left(1-\left\|V_{-}\right\|_{\frac{d+n}{4}} S^{-1}\right)}{(d+n)(q-2) A}\right]^{\frac{(d+n)(q-2)}{(d+n)(q-2)-8}} \Theta^{\frac{(d+n)(q-2)-4 q}{(d+n)(q-2)-8}\cdot\frac{(d+n)(q-2)}{8}}\\
& =&\frac{((d+n)(q-2)-8)}{8}\left[\frac{4\left(1-\left\|V_{-}\right\|_{\frac{d+n}{4}} S^{-1}\right)}{(d+n)(q-2)}\right]^{\frac{(d+n)(q-2)}{(d+n)(q-2)-8}} A^{\frac{8}{8-(d+n)(q-2)}} \Theta^{\frac{(d+n)(q-2)-4 q}{(d+n)(q-2)-8}} .
\end{eqnarray*}
Set $\bar{r}_\Theta=\max \left\{\frac{1}{t_1}, \sqrt{\frac{2 \theta \Theta}{t_g}}\right\}$, then $v_{\frac{1}{\bar{r}_\Theta}} \in S_{r, \Theta}$ for any $r>\bar{r}_\Theta$, and
\begin{equation}\label{eq5.9}
  \left\|\Delta v_{\frac{1}{\bar{r}_\Theta}}\right\|_2^2=\left(\frac{1}{\bar{r}_\Theta}\right)^4\left\|\Delta v_1\right\|_2^2<t_g=\left[\frac{4\left(1-\left\|V_{-}\right\|_{\frac{d+n}{4}} S^{-1}\right)}{(d+n)(q-2) A}\right]^{\frac{8}{(d+n)(q-2)-8}} \Theta^{\frac{(d+n)(q-2)-4 q}{(d+n)(q-2)-8}}.
\end{equation}
Moreover,
\begin{equation}\label{eq5.10}
J_{\bar{r}_\Theta, s}\left(v_{\frac{1}{\bar{r}_\Theta}}\right) \leq h_2\left(\frac{1}{\bar{r}_\Theta}\right) \leq h_2\left(t_1\right) .
\end{equation}
Let $u^0=v_{\frac{1}{\bar{\tau}_\Theta}}, u^1=v_{t_0}$ and
$$
\widetilde{r}_\Theta=\max \left\{\frac{1}{t_0}, \bar{r}_\Theta\right\} .
$$
Then the statement (i) holds by \eqref{eq5.5}, \eqref{eq5.6}, \eqref{eq5.9}, \eqref{eq5.10}.

(ii) holds by \eqref{eq5.8} and a direct calculation.

(iii) In view of $J_{r, s}\left(u^1\right) \leq 0$ for any $\gamma \in \Gamma_{r, \Theta}$ and the definition of $t_0$, we have
$$
\|\Delta \gamma(0)\|_2^2<t_g<\|\Delta \gamma(1)\|_2^2 .
$$
It then follows from \eqref{eq5.8} that
\begin{eqnarray*}
&&\max\limits_{t \in[0,1]} J_{r, s}(\gamma(t)) \\
& \geq& g_2\left(t_g\right) \\
& =&\frac{((d+n)(q-2)-8)}{8}\left[\frac{4\left(1-\left\|V_{-}\right\|_{\frac{d+n}{4}} S^{-1}\right)}{(d+n)(q-2)}\right]^{\frac{(d+n)(q-2)}{(d+n)(q-2)-8}} A^{\frac{8}{8-(d+n)(q-2)}} \Theta^{\frac{(d+n)(q-2)-4 q}{(d+n)(q-2)-8}}
\end{eqnarray*}
for any $\gamma \in \Gamma_{r, \Theta}$, hence the first inequality in (iii) holds. We define a path $\gamma:[0,1] \rightarrow S_{r, \Theta}$ by $\gamma(t): \Omega_r\times\mathbb{T}^n \rightarrow \mathbb{R}$,
$$
 (x,y) \mapsto\left(\tau t_0+(1-\tau) \frac{1}{\widetilde{r}}_\Theta\right)^{\frac{d+n}{2}} v_1\left(\left(\tau t_0+(1-\tau) \frac{1}{\widetilde{r}_\Theta}\right) x,\left(\tau t_0+(1-\tau) \frac{1}{\widetilde{r}_\Theta}\right) y\right) .
$$
Then $\gamma \in \Gamma_{r, \Theta}$, and the second inequality in (iii) follows from \eqref{eq5.4}.
\end{proof}
\begin{lemma}\label{L5.2}
Assume $0<\Theta<\widetilde{\Theta}_V$ where $\widetilde{\Theta}_V$ is given in Theorem \ref{t1.3}. Let $r>\widetilde{r}_\Theta$, where $\widetilde{r}_\Theta$ is defined in Lemma \ref{L5.1}. Then problem \eqref{eq5.1} admits a solution $\left(\lambda_{r, s}, u_{r, s}\right)$ for almost every $s \in\left[\frac{1}{2}, 1\right]$ and $J_{r, s}\left(u_{r, s}\right)=m_{r, s}(\Theta)$.
\end{lemma}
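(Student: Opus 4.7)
The plan is to mirror the strategy used in Lemma \ref{L3.2}, namely apply the abstract monotonicity trick of Proposition \ref{p3.1} to the family $\{J_{r,s}\}_{s\in[1/2,1]}$ and then upgrade the resulting bounded Palais--Smale sequence to a genuine critical point by a compactness argument. Concretely, I would take $E=H_0^2(\Omega_r\times\mathbb{T}^n)$, $H=L^2(\Omega_r\times\mathbb{T}^n)$, $\mu=\Theta$ in the notation of Proposition \ref{p3.1}, and split
$$
J_{r,s}(u)=A(u)-sB(u),\qquad A(u)=\frac{1}{2}\int_{\Omega_r\times\mathbb{T}^n}(|\Delta u|^2+Vu^2)\,dxdy,\quad B(u)=\frac{1}{q}\int|u|^q\,dxdy+\frac{\mu}{p}\int|u|^p\,dxdy.
$$
Since we are now in the case $\mu>0$, the functional $B$ is automatically nonnegative, which was not the case in Section 2; this is the key reason the same abstract tool applies. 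The coercivity hypothesis \eqref{eq3.13} follows from $(V_0)$, which gives $A(u)\ge\frac{1-\|V_-\|_{(d+n)/4}S^{-1}}{2}\|\Delta u\|_2^2$, and the required H\"older estimates \eqref{eq3.14} on $J_{r,s}',J_{r,s}''$ are routine because $2<p<q<4^*$ makes both the $p$- and $q$-power nonlinearities $C^2$ on bounded sets of $H^2$.

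The mountain-pass geometry needed by Proposition \ref{p3.1}, with $w_1=u^0$ and $w_2=u^1$ \emph{independent of} $s$, is exactly what parts (i) and (iii) of Lemma \ref{L5.1} provide: $J_{r,s}(u^1)\le 0$ for all $s\in[1/2,1]$, the value $J_{r,s}(u^0)$ lies strictly below the barrier of part (iii), while $\max_{t\in[0,1]}J_{r,s}(\gamma(t))$ is bounded below by that same barrier for every $\gamma\in\Gamma_{r,\Theta}$. Therefore Proposition \ref{p3.1} yields, for almost every $s\in[1/2,1]$, a bounded sequence $\{u_n\}\subset S_{r,\Theta}$ with $J_{r,s}(u_n)\to m_{r,s}(\Theta)$ and $J_{r,s}'|_{T_{u_n}S_{r,\Theta}}\to 0$.

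From here I would argue exactly as at the end of the proof of Lemma \ref{L3.2}. The Lagrange multiplier
$$
\lambda_n=-\frac{1}{\Theta}\Bigl(\int(|\Delta u_n|^2+Vu_n^2)\,dxdy-s\int|u_n|^q\,dxdy-s\mu\int|u_n|^p\,dxdy\Bigr)
$$
is bounded because $\{u_n\}$ is bounded in $H^2$ and $p,q<4^*$, so $\lambda_n\to\lambda$ along a subsequence and $J_{r,s}'(u_n)+\lambda_n u_n\to 0$ in $H^{-2}(\Omega_r\times\mathbb{T}^n)$. Extracting a weakly convergent subsequence $u_n\rightharpoonup u_{r,s}$ in $H_0^2$, the compactness of the embedding $H_0^2(\Omega_r\times\mathbb{T}^n)\hookrightarrow L^t(\Omega_r\times\mathbb{T}^n)$ for $2\le t<4^*$ (which holds because the $x$-factor $\Omega_r$ is bounded and the $y$-factor $\mathbb{T}^n$ is compact) gives $u_n\to u_{r,s}$ in $L^p$, $L^q$ and $L^2$. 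Passing to the limit in the equation shows that $u_{r,s}\in S_{r,\Theta}$ solves \eqref{eq5.1} with multiplier $\lambda_{r,s}:=\lambda$.

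The final step is to identify the energy as $m_{r,s}(\Theta)$ and to promote weak to strong convergence in $H^2$. For this I would test $J_{r,s}'(u_n)+\lambda_n u_n\to 0$ successively against $u_n$ and against $u_{r,s}$ and subtract; since the potential, $L^p$ and $L^q$ terms converge by the above compactness, one obtains $\|\Delta u_n\|_2\to\|\Delta u_{r,s}\|_2$, which combined with weak convergence yields $u_n\to u_{r,s}$ in $H_0^2$, hence $J_{r,s}(u_{r,s})=m_{r,s}(\Theta)$. The only genuinely delicate point is making sure the compactness is available at the level $4^*$ exponent; since $q<4^*$ strictly, standard Rellich--Kondrachov on the product $\Omega_r\times\mathbb{T}^n$ handles it, and I do not expect any additional obstacle beyond the calculations already carried out in Lemma \ref{L3.2}. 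Unlike Remark \ref{R3.2}, no sign information on $u_{r,s}$ can be asserted because the biharmonic operator does not interact nicely with the truncation $u\mapsto|u|$.
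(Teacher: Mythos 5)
Your proposal is correct and follows essentially the same route as the paper, which simply states that the proof of Lemma \ref{L5.2} is analogous to that of Lemma \ref{L3.2} and omits it. You correctly identify the one substantive adjustment, namely that for $\mu>0$ the whole nonlinear part $B(u)=\frac{1}{q}\int|u|^q+\frac{\mu}{p}\int|u|^p$ is nonnegative and must be placed entirely in the $sB$ term (consistent with the factor $s$ multiplying both nonlinearities in \eqref{eq5.1}), after which Proposition \ref{p3.1}, Lemma \ref{L5.1} and the compact embedding $H_0^2(\Omega_r\times\mathbb{T}^n)\hookrightarrow L^t$ for $t<4^*$ carry the argument exactly as in Lemma \ref{L3.2}.
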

\begin{proof}
The proof is similar to the Lemma \ref{L3.2}. We omit it here.
\end{proof}

\begin{lemma}\label{L5.3}
For fixed $\Theta>0$ the set of solutions $u \in S_{r, \Theta}$ of \eqref{eq5.1} is bounded uniformly in $s$ and $r$.
\end{lemma}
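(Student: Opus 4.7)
The strategy mirrors Lemma \ref{L3.3}, but we must replace the sign trick $\mu\le0$ (which let the authors merge the $L^p$ and $L^q$ nonlinear terms in the Pohozaev--Nehari combination) by a Gagliardo--Nirenberg absorption argument that exploits $p<2+\tfrac{8}{d+n}$. The ingredients are three identities for any solution $(\lambda_{r,s},u_{r,s})$ of \eqref{eq5.1}: the Nehari identity (test by $u$), the Pohozaev identity on $\Omega_r\times\mathbb{T}^n$, and the energy identity $J_{r,s}(u_{r,s})=m_{r,s}(\Theta)$, the last being bounded uniformly in $r$ and $s$ by Lemma \ref{L5.1}(iii).

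First I would subtract twice the Pohozaev identity from the Nehari identity, just as in Lemma \ref{L3.3}. The $\lambda$-terms and $Vu^2$-terms cancel, and one obtains
\[
\frac{s(q-2)}{q}\int_{\Omega_r\times\mathbb{T}^n}|u|^q\,dxdy+\frac{s\mu(p-2)}{p}\int_{\Omega_r\times\mathbb{T}^n}|u|^p\,dxdy
=\frac{4}{d+n}\int_{\Omega_r\times\mathbb{T}^n}|\Delta u|^2\,dxdy-\frac{1}{d+n}\int_{\partial(\Omega_r\times\mathbb{T}^n)}\!|\Delta u|^2\,((x,y)\cdot\mathbf n)\,d\sigma-\frac{1}{d+n}\int_{\Omega_r\times\mathbb{T}^n}\widetilde V u^2\,dxdy.
\]
The boundary of $\Omega_r\times\mathbb{T}^n$ is $\partial\Omega_r\times\mathbb{T}^n$ (the torus factor contributes no boundary), and convexity of $\Omega$ with $0\in\Omega$ gives $(x,y)\cdot\mathbf n=x\cdot\mathbf n_\Omega\ge 0$; so that boundary term is non-negative. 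Since $\mu>0$ and $p>2$, the $L^p$-term on the left is also non-negative, and therefore
\[
\frac{s(q-2)}{q}\int_{\Omega_r\times\mathbb{T}^n}|u|^q\,dxdy\le\frac{4}{d+n}\int_{\Omega_r\times\mathbb{T}^n}|\Delta u|^2\,dxdy+\frac{\|\widetilde V\|_\infty}{d+n}\,\Theta.
\]

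Second, from $J_{r,s}(u)=m_{r,s}(\Theta)$ I would write
\[
\frac{1}{2}\int_{\Omega_r\times\mathbb{T}^n}|\Delta u|^2\,dxdy=m_{r,s}(\Theta)-\frac{1}{2}\int_{\Omega_r\times\mathbb{T}^n}Vu^2\,dxdy+\frac{s}{q}\int_{\Omega_r\times\mathbb{T}^n}|u|^q\,dxdy+\frac{s\mu}{p}\int_{\Omega_r\times\mathbb{T}^n}|u|^p\,dxdy,
\]
insert the bound from the first step on the $L^q$-term, and use Lemma \ref{L2.1} to dominate the $L^p$-term:
\[
\int_{\Omega_r\times\mathbb{T}^n}|u|^p\,dxdy\le C_{d,n,p}\,\Theta^{\frac{4p-(d+n)(p-2)}{8}}\left(\int_{\Omega_r\times\mathbb{T}^n}|\Delta u|^2\,dxdy\right)^{\frac{(d+n)(p-2)}{8}}.
\]
Setting $X:=\|\Delta u\|_2^2$, these combine to
\[
\left(\frac{1}{2}-\frac{4}{(d+n)(q-2)}\right)X\le m_{r,s}(\Theta)+C_1\Theta+C_2\,\Theta^{\frac{4p-(d+n)(p-2)}{8}}\,X^{\frac{(d+n)(p-2)}{8}},
\]
with constants independent of $r$ and $s$. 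The coefficient on the left is strictly positive since $q>2+\tfrac{8}{d+n}$, and the exponent $\tfrac{(d+n)(p-2)}{8}<1$ since $p<2+\tfrac{8}{d+n}$. Together with $m_{r,s}(\Theta)\le h(T_\Theta)$ from Lemma \ref{L5.1}(iii), this self-improving inequality forces $X$ to stay bounded uniformly in $s\in[\tfrac12,1]$ and $r>\widetilde r_\Theta$, which together with the mass constraint gives the claimed $H^2$-bound.

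\textbf{Main obstacle.} The delicate point is the sign analysis in the Pohozaev--Nehari combination: unlike the $\mu\le 0$ situation in Lemma \ref{L3.3}, here we cannot merge the two nonlinear terms via a comparison of $\tfrac12-\tfrac1p$ and $\tfrac12-\tfrac1q$. The trick is to keep both terms on the left and exploit that both are non-negative, isolating a clean bound on $\int|u|^q$ alone; the mass-subcritical $L^p$ term is then absorbed a posteriori by Gagliardo--Nirenberg. A secondary concern is verifying that the Pohozaev identity for $\Delta^2$ on the mixed waveguide domain produces exactly the boundary term $\int_\partial|\Delta u|^2((x,y)\cdot\mathbf n)\,d\sigma$ with the favourable sign from convexity of $\Omega$, but this is standard once one notes that differentiation in the torus directions produces only full derivatives that vanish after integration over $\mathbb{T}^n$.
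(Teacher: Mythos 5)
Your proposal is correct and follows essentially the same route as the paper: combine the Nehari and Pohozaev identities, use convexity of $\Omega_r$ to discard the boundary term, substitute the energy level $J_{r,s}(u)=m_{r,s}(\Theta)$ (bounded uniformly by Lemma \ref{L5.1}(iii)), and absorb the $L^p$ term via Gagliardo--Nirenberg using $p<2+\tfrac{8}{d+n}$ to arrive at a self-improving inequality $cX\le C+C'X^{\alpha}$ with $\alpha<1$. The only difference is bookkeeping --- you drop the non-negative $L^p$ term from the Nehari--Pohozaev combination and reinsert it through the energy identity, whereas the paper carries it along with the coefficient $\mu(p-q)<0$ --- and the resulting final inequality is the same up to a factor of $\tfrac{q-2}{2}$.
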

\begin{proof}
Since $u$ is a solution of \eqref{eq5.1}, we have
\begin{equation*}
  \int_{\Omega_r\times\mathbb{T}^n}|\Delta u|^2 d xdy+\int_{\Omega_r\times\mathbb{T}^n} V u^2 d xdy=s \int_{\Omega_r\times\mathbb{T}^n}|u|^q d xdy+s\mu \int_{\Omega_r\times\mathbb{T}^n}|u|^p d x-\lambda \int_{\Omega_r\times\mathbb{T}^n}|u|^2 d x .
\end{equation*}
The Pohozaev identity implies
\begin{eqnarray*}
&&\frac{d+n-4}{2(d+n)} \int_{\Omega_r\times\mathbb{T}^n}|\Delta u|^2 d x+\frac{1}{2(d+n)} \int_{\partial (\Omega_r\times\mathbb{T}^n)}|\Delta u|^2((x,y) \cdot \mathbf{n}) d \sigma\\
&&+\frac{1}{2(d+n)} \int_{\Omega_r\times\mathbb{T}^n}\widetilde{V} u^2dxdy+\frac{1}{2} \int_{\Omega_r\times\mathbb{T}^n} V u^2 d xdy \\
&=&-\frac{\lambda}{2} \int_{\Omega_r\times\mathbb{T}^n}|u|^2 d x+\frac{s}{q} \int_{\Omega_r\times\mathbb{T}^n}|u|^q d x+\frac{s\mu}{p} \int_{\Omega_r\times\mathbb{T}^n}|u|^p d xdy.
\end{eqnarray*}
It then follows from $\mu >0$ that
\begin{eqnarray*}
&&\frac{2}{d+n} \int_{\Omega_r\times\mathbb{T}^n}|\Delta u|^2 d  xdy-\frac{1}{2(d+n)} \int_{\partial (\Omega_r\times\mathbb{T}^n)}|\Delta u|^2((x,y) \cdot \mathbf{n}) d \sigma\\
&&-\frac{1}{2(d+n)} \int_{\Omega_r\times\mathbb{T}^n}(\nabla V \cdot (x,y)) u^2 d xdy \\
& =&\frac{(q-2) s}{2 q} \int_{\Omega_r\times\mathbb{T}^n}|u|^q d xdy+ s\mu\left(\frac{1}{2}-\frac{1}{p}\right)\int_{\Omega_r\times\mathbb{T}^n}|u|^pd xdy \\
& \geq&\frac{q-2}{2}\left(\frac{1}{2} \int_{\Omega_r\times\mathbb{T}^n}|\Delta u|^2 d xdy+\frac{1}{2} \int_{\Omega_r\times\mathbb{T}^n} V u^2 d xdy-m_{r, s}(\Theta)\right)\\
&&+  s\frac{\mu (p-q) }{2  }  \int_{\Omega_r\times\mathbb{T}^n}|u|^p d xdy .
\end{eqnarray*}
Using Gagliardo-Nirenberg inequality and (iii) in Lemma \ref{L5.1}, we have
 \begin{eqnarray*}
&&\frac{q-2}{2} m_{r, s}(\Theta)\\
&\geq & \frac{(d+n)(p-2)-8}{4(d+n)} \int_{\Omega_r\times\mathbb{T}^n}|\Delta u|^2 d x-\Theta\left(\frac{1}{2 (d+n)}\|\nabla V \cdot (x,y)\|_{\infty}+\frac{p-2}{4}\|V\|_{\infty}\right)\\
&&+\frac{s\mu (p-q) }{2  }C_{d,n, p} \Theta^{\frac{2 p-(d+n)(p-2)}{4}}\left(\int_{\Omega_r\times\mathbb{T}^n}|\Delta u|^2 d xdy\right)^{\frac{(d+n)(p-2)}{4}}.
 \end{eqnarray*}
Since $2<p<2+\frac{8}{d+n}$, we can bound $\int_{\Omega_r\times\mathbb{T}^n}|\Delta u|^2 d xdy$ uniformly in $s$ and $r$.
\end{proof}

\begin{lemma}\label{L5.4}
Assume $0<\Theta<\widetilde{\Theta}_V$, where $\widetilde{\Theta}_V$ is given in Theorem \ref{t1.3}, and let $r>\widetilde{r}_\Theta$, where $\widetilde{r}_\Theta$ is defined in Lemma \ref{L5.1}. Then the following hold:

(i) Equation \eqref{eq1.1} admits a solution $\left(\lambda_{r, \Theta}, u_{r, \Theta}\right)$ for every $r>\widetilde{r}_\Theta$.

(ii) There is $0<\bar{\Theta} \leq \widetilde{\Theta}_V$ such that
$$
\liminf\limits_{r \rightarrow \infty} \lambda_{r, \Theta}>0 \ \text {for any}\ 0<\Theta<\bar{\Theta} .
$$
\end{lemma}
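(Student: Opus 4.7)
My plan is to mirror the strategy used for Theorem \ref{t1.1} (Lemmas \ref{L3.4}--\ref{L3.6}), adapted to the mountain pass family $J_{r,s}$ produced in Section 5.

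For part (i), I would fix $0<\Theta<\widetilde{\Theta}_V$ and $r>\widetilde{r}_\Theta$, and start from Lemma \ref{L5.2}, which supplies a solution $(\lambda_{r,s},u_{r,s})$ of \eqref{eq5.1} at the mountain pass level $m_{r,s}(\Theta)$ for almost every $s\in[\tfrac12,1]$. Lemma \ref{L5.3} provides an $H_0^2$-bound on $\{u_{r,s}\}$ uniform in $s$ (and in $r$), and hence $\{\lambda_{r,s}\}$ is bounded by testing \eqref{eq5.1} against $u_{r,s}$ and using the Gagliardo--Nirenberg inequality from Lemma \ref{L2.1}. Picking a sequence $s_k\to 1$ and passing to a subsequence, there exist $u_r\in S_{r,\Theta}$ and $\lambda_r\in\mathbb{R}$ with $u_{r,s_k}\rightharpoonup u_r$ in $H_0^2(\Omega_r\times\mathbb{T}^n)$, $u_{r,s_k}\to u_r$ in $L^t$ for $2\leq t<4^*$, and $\lambda_{r,s_k}\to\lambda_r$. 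The compactness of the embedding into $L^p$ and $L^q$ together with the convergence of the potential term $\int V u_{r,s_k}^2$ (since $V\in L^{(d+n)/4}$) turns weak convergence into strong convergence in $H_0^2$, just as in the conclusion of Lemma \ref{L3.2}. The limit $(\lambda_r,u_r)$ then solves \eqref{eq1.1} on $\Omega_r\times\mathbb{T}^n$, and I relabel it $(\lambda_{r,\Theta},u_{r,\Theta})$.

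For part (ii), I would adapt the argument of Lemma \ref{L3.6} verbatim. First, Lemma \ref{L3.5} (whose proof, as the paper notes, is independent of the sign of $\mu$) gives $\limsup_{r\to\infty}\|u_{r,\Theta}\|_\infty<\infty$, and the lower bound $m_{r,1}(\Theta)\to\infty$ as $\Theta\to 0$ coming from the first inequality in (iii) of Lemma \ref{L5.1} plays the role of \eqref{eq3.21}. Defining $Q(\Theta)=\liminf_{r\to\infty}\max_{\Omega_r\times\mathbb{T}^n}u_{r,\Theta}$, I argue by contradiction: if $Q(\Theta_k)=0$ along a sequence $\Theta_k\to 0$, then $\int|u_{r,\Theta_k}|^s\to 0$ for every $s>2$, which combined with $I_r(u_{r,\Theta_k})=m_{r,1}(\Theta_k)\to\infty$ forces $\lambda_{r_k,\Theta_k}\to-\infty$ and contradicts the principal eigenvalue comparison used in Lemma \ref{L3.6}. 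Hence there is $\Theta_1>0$ such that $Q(\Theta)>0$ for $0<\Theta<\Theta_1$.

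Using the uniform $H^2$-bound from Lemma \ref{L5.3} again, I extract a weak limit $(\lambda_\Theta,u_\Theta)$ as $r\to\infty$ which solves the limiting equation on $\mathbb{R}^d\times\mathbb{T}^n$ (or, if $u_\Theta\equiv 0$, on a half-space/whole space obtained by translating along a maximum sequence, exactly as in the second half of the proof of Lemma \ref{L3.6}). The Pohozaev identity together with the Gagliardo--Nirenberg inequality produces a lower bound of the type $\|\Delta u_\Theta\|_2^2\gtrsim\Theta^{\alpha}$ with $\alpha<0$, and then combining the Pohozaev identity with the equation and estimating the $L^p$ term by $\mu C_{d,n,p}\Theta^{\frac{2p-(d+n)(p-2)}{4}}\|\Delta u_\Theta\|_2^{\frac{(d+n)(p-2)}{2}}$ yields $\lambda_\Theta\Theta\to-\infty$ as $\Theta\to 0$ unless $\lambda_\Theta>0$; hence there exists $\Theta_2>0$ such that $\lambda_\Theta>0$ for $0<\Theta<\Theta_2$. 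Setting $\bar\Theta=\min\{\Theta_1,\Theta_2,\widetilde{\Theta}_V\}$ gives the conclusion. The main obstacle I anticipate is the estimate on the Pohozaev term $\int\widetilde{V}u_\Theta^2$ in the mountain pass regime with $\mu>0$: the subcritical term now has the opposite sign compared with Lemma \ref{L3.6}, so one must carefully check that the contribution $-\mu(q-p)/(pq)\cdot\|u_\Theta\|_p^p$ is dominated by the $\|\Delta u_\Theta\|_2^2$ term as $\Theta\to 0$, which is where the assumption $2<p<2+\tfrac{8}{d+n}$ and the a priori bound $Q(\Theta)>0$ are both essential.
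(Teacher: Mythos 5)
Part (i) of your proposal matches the paper, which itself simply invokes the argument of Lemma \ref{L3.4} (pass to the limit $s\to1$ using Lemmas \ref{L5.2} and \ref{L5.3}). The issue is with part (ii), where your plan to transplant Lemma \ref{L3.6} ``verbatim'' runs into a genuine obstruction, and the paper in fact argues differently. The paper's proof of (ii) assumes for contradiction that $\lambda_{\Theta_n}\le 0$ along some $\Theta_n\to0$, first derives $|\lambda_{\Theta_n}|\le C$ by the eigenvalue comparison, and then splits into two cases according to whether the weak limit $u_{\Theta_n}$ of $u_{r,\Theta_n}$ vanishes. In both cases the decisive step is a concentration-compactness/translation argument (ruling out the half-space by Liouville) that produces a \emph{nontrivial solution of the autonomous equation} $\Delta^2 v+\lambda_{\Theta_n}v=\mu|v|^{p-2}v+|v|^{q-2}v$ on $\mathbb{R}^d\times\mathbb{T}^n$; combining this equation with its Pohozaev identity yields \eqref{eq5.13}, i.e. a \emph{positive} linear combination of $\|v\|_p^p$ and $\|v\|_q^q$ equals $\frac{2\lambda_{\Theta_n}}{d+n}\|v\|_2^2$, which forces $\lambda_{\Theta_n}>0$ outright (here $\mu>0$ and $p,q<4^*$ are what make all coefficients positive). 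The remaining sub-case of Case 2 (strong convergence) is dispatched by showing $\|\Delta u_{\Theta_n}\|_2^2\to\infty$ from the mountain-pass lower bound of Lemma \ref{L5.1}(iii) and deriving $0\le(\text{something}\to-\infty)$. Your preliminary step $Q(\Theta)>0$ is not used and not needed in this scheme.

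The concrete gap in your route is the step ``the Pohozaev identity together with the Gagliardo--Nirenberg inequality produces a lower bound $\|\Delta u_\Theta\|_2^2\gtrsim\Theta^{\alpha}$''. The analogue of \eqref{eq3.30} was obtained in Lemma \ref{L3.6} precisely because $\mu\le0$ allowed one to \emph{discard} the term $\mu\bigl(\tfrac12-\tfrac1p\bigr)\|u_\Theta\|_p^p$ from the right-hand side of
$$
\frac{2}{d+n}\|\Delta u_\Theta\|_2^2+\frac{1}{2(d+n)}\int \widetilde V u_\Theta^2
=\Bigl(\frac12-\frac1q\Bigr)\|u_\Theta\|_q^q+\mu\Bigl(\frac12-\frac1p\Bigr)\|u_\Theta\|_p^p .
$$
With $\mu>0$ this term is positive and its Gagliardo--Nirenberg estimate carries the exponent $\frac{(d+n)(p-2)}{4}<2$; after dividing by $\|\Delta u_\Theta\|_2^2$ one only learns that $\|\Delta u_\Theta\|_2^2$ avoids an intermediate range (``either small or large''), exactly the dichotomy that distinguishes the local-minimum solutions of Theorem \ref{t1.2} from the mountain-pass ones here. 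Ruling out the small branch for the \emph{weak limit} is nontrivial, because the blow-up of $\|\Delta u_{r,\Theta}\|_2^2$ as $\Theta\to0$ guaranteed by Lemma \ref{L5.1}(iii) need not pass to $u_\Theta$ when compactness is lost; this is precisely what the paper's Case 2 dichotomy (strong convergence versus escaping mass) is designed to handle. Note also that you located the sign difficulty at the wrong place: in the final estimate the contribution $-\mu(q-p)/(pq)\,\|u_\Theta\|_p^p$ is \emph{negative} for $\mu>0$ and can simply be dropped from the upper bound, so it causes no trouble there; the obstruction sits entirely in the lower-bound step for $\|\Delta u_\Theta\|_2^2$ and in the possible failure of strong convergence.
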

\begin{proof}
The proof of $(i)$ is similar to that of Lemma \ref{L3.4}, we omit it. As be consider $H_0^2\left(\Omega_r\times\mathbb{T}^n\right)$ as a subspace of $H^2(\mathbb{R}^d\times\mathbb{T}^n)$ for every $r>0$. In view of Lemma \ref{L5.3}, there are $\lambda_\Theta$ and $u_\Theta \in H^2(\mathbb{R}^d\times\mathbb{T}^n)$ such that, up to a subsequence,
$$
u_{r, \Theta} \rightharpoonup u_\Theta \ \text {in}\ H^2(\mathbb{R}^d\times\mathbb{T}^n)\ \text {and}\  \lim _{r \rightarrow \infty} \lambda_{r, \Theta} \rightarrow \lambda_\Theta .
$$
Arguing by contradiction, we assume that $\lambda_{\Theta_n} \leq 0$ for some sequence $\Theta_n \rightarrow 0$. Let $\theta_r$ be the principal eigenvalue of $\Delta^2$ with Dirichlet boundary condition in $\Omega_r\times\mathbb{T}^n$ and let $v_r>0$ be the corresponding normalized eigenfunction. Testing \eqref{eq1.1} with $v_r$, it holds
$$
\left(\theta_r+\lambda_{r, \Theta_n}\right) \int_{\Omega_r\times\mathbb{T}^n} u_{r, \Theta_n} v_r d xdy+\int_{\Omega_r\times\mathbb{T}^n} V u_{r, \Theta_n} v_r d xdy \geq 0 .
$$
In view of $\int_{\Omega_r\times\mathbb{T}^n} u_{r,\Theta_n} v_r d xdy>0$ and $\theta_r=r^{-4} \theta_1$, there holds
$$
\max\limits_{(x,y) \in \mathbb{R}^d\times\mathbb{T}^n} V+\lambda_{r,\Theta_n}+r^{-4} \theta_1 \geq 0 .
$$
Hence there exists $C>0$ independent of $n$ such that $\left|\lambda_{\Theta_n}\right| \leq C$ for any $n$.

\textbf{Case 1} There is subsequence denoted still by $\left\{\Theta_n\right\}$ such that $u_{\Theta_n}=0$. We first claim that there exists $d_n>0$ for any $n$ such that
\begin{equation}\label{eq5.11}
  \liminf\limits_{r \rightarrow \infty} \sup\limits_{z \in \mathbb{R}^d} \int_{B(z, 1)\times\mathbb{T}^n} u_{r, \Theta_n}^2 d xdy \geq d_n .
\end{equation}
Otherwise, the concentration compactness principle implies for every $n$ that
$$
u_{r, \Theta_n} \rightarrow 0 \text { in } L^t(\mathbb{R}^d\times\mathbb{T}^n) \quad \text { as } r \rightarrow \infty, \quad \text { for all } 2<t<4^* .
$$
By the diagonal principle, \eqref{eq1.1} and $\left|\lambda_{r, \Theta_n}\right| \leq 2 C$ for large $r$, there exists $r_n \rightarrow \infty$ such that
$$
\int_{\Omega_r\times\mathbb{T}^n}\left|\Delta u_{r_n, \Theta_n}\right|^2 d xdy \leq C
$$
for some $C$ independent of $n$, contradicting (iii) in Lemma \ref{L5.1} for large $n$. As a consequence \eqref{eq5.11} holds, and there is $z_{r, \Theta_n} \in \Omega_r\times\mathbb{T}^n$ with $\left|z_{r, \Theta_n}\right| \rightarrow \infty$ such that
$$
\int_{B\left(z_{r, \Theta_n}, 1\right)} u_{r, \Theta_n}^2 d xdy \geq \frac{d_n}{2} .
$$
Moreover, $\operatorname{dist}\left(z_{r, \Theta_n}, \partial (\Omega_r\times\mathbb{T}^n)\right) \rightarrow \infty$ as $r \rightarrow \infty$ by an argument similar to that in Lemma \ref{L3.6}. Now, for $n$ fixed let $v_r(x)=u_{r, \Theta_n}\left(x+z_{r, \Theta_n}\right)$ for
$$
x \in \Sigma^r:=\left\{x \in \mathbb{R}^d\times\mathbb{T}^n: x+z_{r, \Theta_n} \in \Omega_r\times\mathbb{T}^n\right\}.
$$
It follows from Lemma \ref{L5.3} that there is $v \in H^2(\mathbb{R}^d\times\mathbb{T}^n)$ with $v \neq 0$ such that $v_r \rightharpoonup v$. Observe that for every $\phi \in C_c^{\infty}(\mathbb{R}^d\times\mathbb{T}^n)$ there is $r$ large such that $\phi(\cdot-z_{r, \Theta_n}) \in C_c^{\infty}\left(\Omega_r\times\mathbb{T}^n\right)$ due to $\operatorname{dist}\left(z_{r, \Theta_n}, \partial (\Omega_r\times\mathbb{T}^n)\right) \rightarrow \infty$ as $r \rightarrow \infty$. It follows that
\begin{eqnarray}\label{eq5.12}
&&\int_{\Omega_r\times\mathbb{T}^n} \Delta u_{r, \Theta_n} \Delta \phi\left(\cdot-z_{r, \Theta_n}\right) d xdy+\int_{\Omega_r\times\mathbb{T}^n} V u_{r, \Theta_n} \phi\left(\cdot-z_{r, \Theta_n}\right) d xdy \nonumber\\
&=&\int_{\Omega_r\times\mathbb{T}^n}\left|u_{r, \Theta_n}\right|^{q-2} u_{r, \Theta_n} \phi\left(\cdot-z_{r, \Theta_n}\right) d xdy+\mu \int_{\Omega_r\times\mathbb{T}^n}\left|u_{r, \Theta_n}\right|^{p-2} u_{r, \Theta_n} \phi\left(\cdot-z_{r, \Theta_n}\right) d xdy\nonumber\\
&&-\lambda_{r, \Theta_n} \int_{\Omega_r\times\mathbb{T}^n} u_{r, \Theta_n} \phi\left(\cdot-z_{r, \Theta_n}\right) d xdy.
\end{eqnarray}
Using $|z_{r,\Theta_n}|\rightarrow\infty$ as $r\rightarrow\infty$, it follows that
\begin{eqnarray*}
\left|\int_{\Omega_r\times\mathbb{T}^n} V u_{r, \Theta_n} \phi\left(\cdot-z_{r, \Theta_n}\right) d xdy\right| & \leq& \int_{S u p p \phi}\left|V\left(\cdot+z_{r, \Theta_n}\right) v_r \phi\right| d xdy \\
& \leq&\left\|v_r\right\|_{4^*}\|\phi\|_{4^*}\left(\int_{\mathbb{R}^d \backslash B_{\frac{|z_{r, \Theta_n }|}{2}}\times\mathbb{T}^n}|V|^{\frac{d+n}{4}} d xdy\right)^{\frac{4}{d+n}}  \\
& \rightarrow& 0 \quad \text { as } r \rightarrow \infty.
\end{eqnarray*}
Letting $r \rightarrow \infty$ in \eqref{eq5.12}, we get for every $\phi \in C_c^{\infty}(\mathbb{R}^d\times\mathbb{T}^n)$ :
$$
\int_{\mathbb{R}^d\times\mathbb{T}^n} \Delta v \cdot \Delta \phi d xdy+\lambda_{\Theta_n} \int_{\mathbb{R}^d\times\mathbb{T}^n} v \phi d xdy=\int_{\mathbb{R}^d\times\mathbb{T}^n}|v|^{q-2} v \phi d x+\mu \int_{\mathbb{R}^d\times\mathbb{T}^n}|v|^{p-2} v \phi d xdy.
$$
Therefore $v \in H^2(\mathbb{R}^d\times\mathbb{T}^n)$ is a weak solution of the equation
$$
\Delta^2 v+\lambda_{\Theta_n} v=\mu |v|^{p-2} v+|v|^{q-2} v \quad \text { in } \mathbb{R}^d\times\mathbb{T}^n
$$
and
$$
\int_{\mathbb{R}^d\times\mathbb{T}^n}|\Delta v|^2 d xdy+\lambda_{\Theta_n} \int_{\mathbb{R}^d\times\mathbb{T}^n}|v|^2 d xdy=\mu \int_{\mathbb{R}^d\times\mathbb{T}^n}|v|^p d xdy+\int_{\mathbb{R}^d\times\mathbb{T}^n}|v|^q d xdy.
$$
The Pohozaev identity implies
$$
\frac{d+n-4}{2(d+n)} \int_{\mathbb{R}^d\times\mathbb{T}^n}|\Delta v|^2 d xdy+\frac{\lambda_{\Theta_n}}{2} \int_{\mathbb{R}^d\times\mathbb{T}^n}|v|^2 d xdy=\frac{\mu}{p} \int_{\mathbb{R}^d\times\mathbb{T}^n}|v|^p d xdy+\frac{1}{q} \int_{\mathbb{R}^d\times\mathbb{T}^n}|v|^q d xdy,
$$
hence
\begin{eqnarray}\label{eq5.13}
&&\frac{\mu(2(d+n)-p(d+n-4))}{2p(d+n)}\int_{\mathbb{R}^d\times\mathbb{T}^n}|v|^pd xdy+\frac{2(d+n)-q(d+n-4)}{2q(d+n)} \int_{\mathbb{R}^\times\mathbb{T}^n}|v|^q d xdy \nonumber\\
&=&\frac{2\lambda_{\Theta_n}}{d+n} \int_{\mathbb{R}^d\times\mathbb{T}^n}|v|^2 d xdy.
\end{eqnarray}
We have $\lambda_{\Theta_n}>0$ because of $2<p<2+\frac{8}{d+n}<q<4^*$, which is a contradiction.

\textbf{Case 2} $u_{\Theta_n} \neq 0$ for $n$ large. Note that $u_{\Theta_n}$ satisfies
\begin{equation}\label{eq5.14}
  \Delta^2 u_{\Theta_n}+V u_{\Theta_n}+\lambda_{\Theta_n} u_{\Theta_n}=\mu \left|u_{\Theta_n}\right|^{p-2} u_{\Theta_n}+\left|u_{\Theta_n}\right|^{q-2} u_{\Theta_n} .
\end{equation}
If $v_{r, \Theta_n}:=u_{r, \Theta_n}-u_{\Theta_n}$ satisfies
\begin{equation}\label{eq5.15}
  \limsup\limits_{r \rightarrow \infty} \max\limits_{z \in \mathbb{R}^d\times\mathbb{T}^n} \int_{B(z, 1)} v_{r, \Theta_n}^2 d xdy=0,
\end{equation}
then the concentration compactness principle implies $u_{r, \Theta_n} \rightarrow u_{\Theta_n}$ in $L^t(\mathbb{R}^d\times\mathbb{T}^n)$ for any $2<t<4^*$. It then follows from \eqref{eq1.1} and \eqref{eq5.14} that
\begin{eqnarray*}
&&\int_{\Omega_r\times\mathbb{T}^n}\left|\Delta u_{r, \Theta_n}\right|^2 d xdy+\Theta_n \lambda_{r, \Theta_n} \\
& =&\mu \int_{\Omega_r\times\mathbb{T}^n}|u_{r, \Theta_n}|^p d xdy+\int_{\Omega_r\times\mathbb{T}^n}\left|u_{r, \Theta_n}\right|^q d xdy-\int_{\Omega_r\times\mathbb{T}^n} V u_{r, \Theta_n}^2 d xdy \\
& \rightarrow& \mu \int_{\mathbb{R}^d\times\mathbb{T}^n}|u_{\Theta_n}|^p d xdy+\int_{\mathbb{R}^d\times\mathbb{T}^n}\left|u_{\Theta_n}\right|^q d xdy-\int_{\mathbb{R}^d\times\mathbb{T}^n} V u_{\Theta_n}^2 d xdy \\
& =&\int_{\mathbb{R}^d\times\mathbb{T}^n}\left|\Delta u_{\Theta_n}\right|^2 d xdy+\lambda_{\Theta_n} \int_{\mathbb{R}^d\times\mathbb{T}^n} u_{\Theta_n}^2 d xdy.
\end{eqnarray*}
Using $\lambda_{r, \Theta_n} \rightarrow \lambda_{\Theta_n}$ as $r \rightarrow \infty$, we further have
\begin{equation}\label{eq5.16}
\int_{\Omega_r\times\mathbb{T}^n}\left|\Delta u_{r, \Theta_n}\right|^2 d xdy+\Theta_n \lambda_{\Theta_n} \rightarrow \int_{\mathbb{R}^d\times\mathbb{T}^n}\left|\Delta u_{\Theta_n}\right|^2 d xdy+\lambda_{\Theta_n} \int_{\mathbb{R}^d\times\mathbb{T}^n} u_{\Theta_n}^2 d xdy
\end{equation}
as $r \rightarrow \infty$. Using \eqref{eq5.16}, (iii) in Lemma \ref{L5.1} and $\left|\lambda_{\Theta_n}\right| \leq C$ for large $n$, there holds
$$
\int_{\mathbb{R}^d\times\mathbb{T}^n}\left|\Delta u_{\Theta_n}\right|^2 d xdy \rightarrow \infty \quad \text { as } n \rightarrow \infty .
$$
By \eqref{eq5.14} and the Pohozaev identity
\begin{eqnarray*}
& &\frac{d+n-4}{2(d+n)} \int_{\mathbb{R}^d\times\mathbb{T}^n}\left|\Delta u_{\Theta_n}\right|^2 d xdy+\frac{1}{2(d+n)} \int_{\mathbb{R}^d\times\mathbb{T}^n} \widetilde{V} u_{\Theta_n}^2dxdy+\frac{1}{2} \int_{\mathbb{R}^d\times\mathbb{T}^n} V  u_{\Theta_n}^2 d xdy \nonumber\\
& =&\frac{1}{q} \int_{\mathbb{R}^d\times\mathbb{T}^n}\left|u_{\Theta_n}\right|^q d x+\frac{\mu}{p} \int_{\mathbb{R}^d\times\mathbb{T}^n}|u_{\Theta_n}|^p d xdy-\frac{\lambda_\Theta}{2} \int_{\mathbb{R}^d\times\mathbb{T}^n} u_{\Theta_n}^2 d xdy,
\end{eqnarray*}
it holds that
\begin{eqnarray*}
0 &\leq & \frac{(2-q) \lambda_{\Theta_n}}{2 q} \int_{\mathbb{R}^d\times\mathbb{T}^n} u_{\Theta_n}^2 d xdy \\
&\leq & \frac{(d+n-4) q-2(d+n)}{2q(d+n)} \int_{\mathbb{R}^d\times\mathbb{T}^n}\left|\Delta u_{\Theta_n}\right|^2 d xdy+\frac{\|\widetilde{V}\|_{\infty}}{2(d+n)} \Theta_n+\frac{(q-2)\|V\|_{\infty}}{2 q} \Theta_n \\
&\rightarrow & -\infty  \text { as } n \rightarrow \infty .
\end{eqnarray*}
Therefore \eqref{eq5.15}  cannot occur. Consequently there exist $d_n>0$ and $z_{r, \Theta_n} \in \Omega_r\times\mathbb{T}^n$ with $\left|z_{r, \Theta_n}\right| \rightarrow \infty$ as $r \rightarrow \infty$ such that
$$
\int_{B(z_{r, \Theta_n}, 1)} v_{r, \Theta_n}^2 d xdy>d_n.
$$
Then $\widetilde{v}_{r, \Theta_n}:=v_{r, \Theta_n}\left(\cdot+z_{r, \Theta_n}\right) \rightharpoonup \widetilde{v}_{\Theta_n} \neq 0$, and $\widetilde{v}_{\Theta_n}$ is a nonnegative solution of
$$
\Delta^2 v+\lambda_{\Theta_n} v=\beta f(v) v+|v|^{q-2} v \ \text { in } \mathbb{R}^d\times\mathbb{T}^n .
$$
In fact, we have $\liminf\limits_{r \rightarrow \infty} \operatorname{dist}\left(z_{r, \Theta_n}, \partial (\Omega_r\times\mathbb{T}^n)\right)=\infty$ by the Liouville theorem on the half space. It follows from an argument similar to that of \eqref{eq5.13} that $\lambda_{\Theta_n}>0$ for large $n$, which is a contradiction.
\end{proof}

\noindent\textbf{Proof of Theorem \ref{t1.3}} The proof is a direct consequence of Lemma \ref{L5.4} and  Lemma \ref{L3.5}.

\section{Orbital stability}
In this section,  we will study the orbital stability of the solution obtained in Theorems \ref{t1.2} and \ref{t1.4}.   Firstly, we give the definition of orbital stability.
\begin{definition}
A set $\mathcal{M} \subset H^2(\mathbb{R}^d\times\mathbb{T}^n)$ is orbitally stable under the flow associated with the problem
\begin{equation}\label{eq6.1}
   \left\{\begin{array}{l}i \frac{\partial \psi}{\partial t}-\Delta^2 \psi-V(x,y)\psi+\mu|\psi|^{p-2}\psi+|\psi|^{q-2}\psi =0,\ t>0,\ x \in \mathbb{R}^d\times\mathbb{T}^n, \\ \psi(0, x,y)=u_0(x,y),\end{array}\right.
\end{equation}
if $\forall \theta>0,$ there exists $\gamma>0$ such that for any $u_0 \in H^2(\mathbb{R}^d\times\mathbb{T}^n)$ satisfying $\mathop{\operatorname{dist}}\limits_{H^2(\mathbb{R}^d\times\mathbb{T}^n)}(u_0, \mathcal{M})<\gamma$,
the solution $\psi(t, \cdot, \cdot)$ of problem \eqref{eq6.1} with $\psi(0, x, y)=u_0$ satisfies
$$
\sup _{t \in \mathbb{R}^{+}} \mathop{\operatorname{dist}}\limits_{H^2(\mathbb{R}^d\times\mathbb{T}^n)}(\psi(t, \cdot, \cdot), \mathcal{M})<\theta .
$$
\end{definition}

\begin{proof}[\bf Proof of Theorem \ref{t1.7}]
Similar to \eqref{eq3.3}, we have
\begin{eqnarray*}
I(u)&=&\frac{1}{2} \int_{\mathbb{R}^d\times\mathbb{T}^n}[|\Delta_{x,y}u|^2+V(x,y)u^2] d xdy -\frac{1}{q} \int_{\mathbb{R}^d\times\mathbb{T}^n}|u|^q d xdy-\frac{\mu}{p} \int_{\mathbb{R}^d\times\mathbb{T}^n}|u|^p d xdy
\\
&\geq&\frac{1-\left\|V_{-}\right\|_{\frac{d+n}{4}} S^{-1}}{2} \int_{\mathbb{R}^d\times\mathbb{T}^n}|\Delta u|^2 d xdy-\frac{C_{d,n, q} \Theta^{\frac{4q-(d+n)(q-2)}{8}}}{q}\left(\int_{\mathbb{R}^d\times\mathbb{T}^n}|\Delta u|^2 d xdy\right)^{\frac{(d+n)(q-2)}{8}}\\
&&-\mu C_{d,n, p} \Theta^{\frac{4 p-(d+n)(p-2)}{8}}\left(\int_{\mathbb{R}^d\times\mathbb{T}^n}|\Delta u|^2 d xdy\right)^{\frac{(d+n)(p-2)}{8}}\\
&=&Q_\Theta(\|\Delta u\|_2),
\end{eqnarray*}
where $Q_\Theta(m)=\frac{1-\left\|V_{-}\right\|_{\frac{d+n}{4}} S^{-1}}{2}m^2-\frac{C_{d,n, q} \Theta^{\frac{4q-(d+n)(q-2)}{8}}}{q}m^{\frac{(d+n)(q-2)}{4}}-\mu C_{d,n, p} \Theta^{\frac{4 p-(d+n)(p-2)}{8}}m^{\frac{(d+n)(p-2)}{4}}$. Define $\mathcal{M}$ as follows:
$$
\begin{aligned}
\mathcal{M}: &=\left\{v \in S_\Theta:\left.I\right|_{S_\Theta} ^{\prime}(v)=0, I(v)=e_{r, \Theta}\right\} \\
&=\left\{v \in S_\Theta:\left.I\right|_{S_\Theta} ^{\prime}(v)=0, I(v)=e_{r, \Theta},\ \|\Delta v\|_2 \leq R_1,\ Q_\Theta(R_1)=0\right\} .
\end{aligned}
$$
Now, we prove the stability of the sets $\mathcal{M}$. Denoting by $\psi(t, \cdot, \cdot)$ the solution to \eqref{eq6.1} with initial data $u_0$ and denoting by $\left[0, T_{\max }\right)$ the maximal existence interval for $\psi(t, \cdot, \cdot)$, we assume that $\psi(t, \cdot, \cdot)$ is globally defined for positive times. Next we prove that $\mathcal{M}$ is orbitally stable. We assume that the conclusion is invalid,  Then there exist $(\psi_0^j)_j \subset H^2(\mathbb{R}^d\times\mathbb{T}^n)$ and $(t_j)_j \subset \mathbb{R}^{+}$ such that
\begin{equation}\label{eq6.2}
  \lim\limits_{j \rightarrow \infty} \mathop{\operatorname{dist}}\limits_{H^2(\mathbb{R}^d\times\mathbb{T}^n)}\left(\psi_0^j, \mathcal{M}\right)=0,
\end{equation}
\begin{equation}\label{eq6.3}
  \liminf\limits_{j \rightarrow \infty} \mathop{\operatorname{dist}}\limits_{H^2(\mathbb{R}^d\times\mathbb{T}^n)}\left(\psi^j\left(t_j\right), \mathcal{M}\right)>0,
\end{equation}
where $\psi_j$ is the global solution of \eqref{eq6.1} with $\psi_j(0)=\psi_0^j$. By \eqref{eq6.2} we have
$$
\|\psi_0^j\|_2^2=\Theta+o_j(1),\ I(\psi_0^j)=e_\Theta+o_j(1).
$$
By conservation of mass and energy we infer that
$$
\|\psi_j\|_2^2=\Theta+o_j(1),\ I(\psi_j)=e_\Theta+o_j(1).
$$
By fundamental perturbation arguments, for $\tilde{\psi}_j:=\sqrt{\Theta} \|\psi_j\|_2^{-1} \psi_j \in S_\Theta$ we have
$$
\|\tilde{\psi}_j\|_2^2=\Theta+o_j(1),\ I(\tilde{\psi}_j)=e_\Theta+o_j(1).
$$
This implies that the sequence $(\tilde{\psi}_j)_j$ is a minimizing sequence of $e_\Theta$. From the proof of Theorem \ref{t1.2} we know that up to a subsequence, $\tilde{\psi}_j$ converges strongly to a minimizer $u_\Theta$ of $e_\Theta$ in $H^2(\mathbb{R}^d\times\mathbb{T}^n)$, which contradicts \eqref{eq6.3}. Hence $\mathcal{M}$ is orbitally stable.
\end{proof}

\end{document}